\titleformat{\subsection}{\it}{\thesubsection.\enspace}{1.5pt}{}
\titleformat{\subsubsection}{\it}{\thesubsubsection.\enspace}{1.5pt}{}
\newtheorem{theo}{Theorem}[section]
\newtheorem{lemm}[theo]{Lemma}
\newtheorem{prop}[theo]{Proposition}
\newtheorem{rema}{Remark}[section]
\numberwithin{equation}{section}
\def\vr{\varrho}
\def\n{\nabla}
\def\nc{\nabla\cdot}
\def\nt{\nabla\times}
\def\p{\partial}
\def\d{\delta}
\def\x{|x|}
\def\g{\gamma}
\def\var{\varepsilon}
\def\f{\widehat}
\def\c{\mathcal}
\def\bq{\begin{equation}}
\def\eq{\end{equation}}
\def\bqq{\begin{equation*}}
\def\eqq{\end{equation*}}
\begin{document}
\title{Lower Bound and Space-time Decay Rates of Higher Order Derivatives of Solution
       for the Compressible Navier-Stokes and Hall-MHD Equations
       \hspace{-510mm}}
\author{Jincheng Gao \quad Zeyu Lyu \quad  Zheng-an Yao \\[10pt]
\small {School of Mathematics, Sun Yat-sen University,}\\
\small {510275, Guangzhou, P.R. China}\\[5pt]
}

\footnotetext{Email: \it gaojc1998@163.com(J.C.Gao), lvzy3@mail2.sysu.edu.cn(Z.Y.Lyu), mcsyao@mail.sysu.edu.cn(Z.A.Yao).}
\date{}

\maketitle

\begin{abstract}
In this paper, we address the lower bound and space-time decay rates for the compressible
Navier-Stokes and Hall-MHD equations under $H^3-$framework in $\mathbb{R}^3$.
First of all, the lower bound of decay rate for the density, velocity and magnetic field
converging to the equilibrium status in $L^2$ is $(1+t)^{-\frac{3}{4}}$;
the lower bound of decay rate for the first order spatial derivative of density and velocity
converging to zero in $L^2$ is $(1+t)^{-\frac{5}{4}}$,
and the $k(\in [1, 3])-$th order spatial derivative of magnetic field
converging to zero in $L^2$ is $(1+t)^{-\frac{3+2k}{4}}$.
Secondly, the lower bound of decay rate for time derivatives of density and velocity
converging to zero in $L^2$ is $(1+t)^{-\frac{5}{4}}$;
however, the lower bound of decay rate for time derivatives of magnetic field converging
to zero in $L^2$ is $(1+t)^{-\frac{7}{4}}$.
Finally, we address the decay rate of solution in weighted Sobolev space $H^3_\g$.
More precisely, the upper bound of decay rate of the $k(\in [0, 2])$-th order spatial derivatives
of density and velocity converging to the $k(\in [0, 2])$-th order derivatives of constant equilibrium
in weighted space $L^2_\g$ is $t^{-\frac{3}{4}+\g-\frac{k}{2}}$;
however, the upper bounds of decay rate of the $k(\in [0, 3])$-th order spatial derivatives
of magnetic field converging to zero in weighted space $L^2_\g$ is $t^{-\frac{3}{4}+\frac{\g}{2}-\frac{k}{2}}$.


\end{abstract}


\section{Introduction}

The application of Hall-magnetohydrodynamics(in short, Hall-MHD) system
covers a very wide range of physical objects, for example, magnetic reconnection
in space plasmas, star formulation and neutron stars, refer to
\cite{{Homann},{Wardle},{Balbus-Terquem}} and the references therein.
Recently, Acheritogaray et al.\cite{Liu} derived the Hall-MHD equations
from the two-fluid Euler-Maxwell system for
electrons and ions through a set of scaling limits or from the kinetic
equations by taking macroscopic quantities in the equations under some closure assumptions.
They also established the global existence of weak solutions with periodic boundary condition.
In this paper, we investigate the following compressible Hall-MHD equations in
three-dimensional whole space $\mathbb{R}^3$(see \cite{Liu}):
\begin{equation}\label{eq-MHD}
\left\{
\begin{aligned}
&\rho_t+{\rm div}(\rho u)=0,\\
&(\rho u)_t+{\rm div}(\rho u\otimes u)-\mu \Delta u-(\mu+\nu)\nabla {\rm div}u
  +\nabla P(\rho)=(\nabla \times B)\times B,\\
&B_t -{\nabla \times}(u \times B)+{{\nabla \times} { \left[\frac{({\nabla \times} B)\times B}{\rho}\right]}}= \Delta B, \ {\rm div} B=0,
\end{aligned}
\right.
\end{equation}
where the functions $\rho, u,$ and $B$
represent density, velocity, and magnetic field respectively.
The pressure $P(\rho)$ is a smooth function in a neighborhood of $1$ with $P'(1)>0$.
For the sake of simplicity, we assume $P'(1)=1$.
The constants $\mu$ and $\nu$ denote the viscosity coefficients of the flow
and satisfy physical condition:
$
\mu>0, \ 2\mu+3\nu \ge 0.
$
To complete the system \eqref{eq-MHD}, the initial data are given by
\begin{equation}\label{ID1}
\left.(\rho, u, B)(x,t)\right|_{t=0}=(\rho_0(x), u_0(x), B_0(x)).
\end{equation}
Furthermore, as the space variable tends to infinity, we assume
\begin{equation}\label{BC1}
\underset{|x|\rightarrow \infty}{\lim}(\rho_0-1, u_0, B_0)(x)=0.
\end{equation}
If the Hall effect term ${\nabla \times}\left[\frac{(\nabla \times B)\times B}{\rho}\right]$ is neglected,
the compressible Hall-MHD equations transform into the well-known compressible MHD equations,
which can be obtained as the singular limit of the full coupled Navier-Stokes equations and Maxwell's
equations when the dielectric constant vanishes \cite{Kawashima-Shizuta}.
In the sequence, we will describe some mathematical results related to the
Navier-Stokes and Hall-MHD equations.

(I)\textbf{Some results for the incompressible Hall-MHD equations}.
For the incompressible Hall-MHD equations(i.e., $\rho=$constant),
Chae et al.\cite{Chae-Degond-Liu} proved local existence of smooth solutions for large data
and global smooth solutions for small data in three-dimensional whole space.
They also showed a Liouville theorem for the stationary solutions.
Chae and Lee \cite{Chae-Lee} established an optimal blow-up criterion for classical solutions
and proved two global-in-time existence results of classical solutions for small initial data,
the smallness condition of which are given by the suitable Sobolev and Besov norms respectively.
Later, Fan et al.\cite{Fan-Li-Nakamura} also established some new regularity criteria,
which were also built for density-dependent incompressible Hall-MHD equations with positive initial
density by Fan and Ozawa \cite{Fan-Ozawa}.
On one hand, Maicon and Lucas \cite{Maicon-Lucas} proved a stability theorem for
global large solutions under a suitable integrable hypothesis and constructed a special large
solution by assuming the condition of curl-free magnetic fields.
On the other hand, Fan et al. \cite{Fan-Huang-Nakamura} established the global well-posedness of the axisymmetric solutions.
Recently, Chae and Schonbek \cite{Chae-Schonbek} established temporal decay estimates for weak solutions
and obtained algebraic time decay for higher order Sobolev norms of small initial data solution.
Furthermore, Weng \cite{Weng-JDE} extended this result
by providing upper and lower bounds on the decay of higher order derivatives.
In \cite{Chae-Weng}, Chae and Weng have showed that the incompressible Hall-MHD without resistivity
is not globally in time well-posed in any $H^m(\mathbb{R}^3)$ with $m >7/2$,
i.e., for some axisymmetric smooth data, either the solution will
become singular instantaneously, or the solution blows up in finite time.

(II)\textbf{Lower and upper bounds of decay rate for the incompressible Navier-Stokes flows}.
First of all, the problem of determining whether or not weak solutions with large initial data
decay to zero in $L^2$ as time tends to infinity was posed by Leray \cite{Leray}.
This was answered affirmatively along this direction by Kato \cite{Kato} and Masuda \cite{Masuda}.
The algebraic decay rate for the weak solution of incompressible Navier-Stokes equation
was firstly obtained by Schonbek \cite{Schonbek}.
This was improved to the optimal one in \cite{Schonbek-CPDE}.
Furthermore, Schonbek also addressed the lower bound of decay rate for solution
of Navier-Stokes equation \cite{{Schonbek-CPDE},{Schonbek-JAMS}}
and MHD equation \cite{Schonbek-Schonbek-Suli}.
The upper bound of decay rates of solution for the higher order spatial derivatives were studied
in \cite{{Schonbek-CPDE-1995},{Schonbek-Wiegner}}. Specifically, if the decay rate
$\|u(t)\|_{L^2}=\mathcal{O}(t^{-\theta})$ holds on for solution of Navier-Stokes equation
on $\mathbb{R}^n(n \le 5)$, then we have
\begin{equation*}
\|\nabla^k u(t)\|_{L^2}=\mathcal{O}(t^{-\theta-\frac{k}{2}}),k \in\mathbb{N},
\end{equation*}
which implies the higher order spatial derivatives admit the optimal decay rates
in the sense that they coincide with the rates for solution to the heat system.
However, this property of decay rate for the 3D exterior domain is still an open question
(see Remark (3) on page 401 in \cite{Han-CMP}).
Based on so-called Gevrey estimates, Oliver and Titi \cite{Oliver-Titi} established
the lower and upper bounds of decay rate for the higher order derivatives
of solution to the incompressible Navier-Stokes equation in whole space.
More precisely, for real number $\theta$ and small constant $\var$, we assume
$$\|u(t)\|_{L^2}\lesssim (1+t)^{-\theta},
\|(u-v)(t)\|_{L^2}\lesssim \var (1+t)^{-\theta},$$
and
$$(1+t)^{-\theta-k}\lesssim \|\n^k v(t)\|_{L^2}\lesssim (1+t)^{-\theta-k},k\in \mathbb{N},$$
where $u(t)$ is a solution to the incompressible Navier-Stokes equations,
and $v(t)$ solves the heat equation. Then, they \cite{Oliver-Titi} established the following decay rate
\begin{equation}\label{Decay-Titi}
(1+t)^{-\theta-k} \lesssim \|\n^k u(t)\|_{L^2}\lesssim (1+t)^{-\theta-k},
\end{equation}
for every real number $k>0$. Later, this result was generalized to the incompressible Hall-MHD equations
in three dimensional whole space by Weng \cite{Weng-JDE}.
The equation type also changes from parabolic to hyperbolic-parabolic coupling
when the fluid changes from incompressibility to compressibility.
Thus, a natural question is: whether the solution of compressible Navier-Stokes equation or Hall-MHD equation
obeys the lower bound and upper bound of decay rate for the higher order derivative like \eqref{Decay-Titi}.
\textit{The first purpose of this paper is to provide an affirmative answer along this direction}.

(III)\textbf{Lower bound of decay rate for the compressible Navier-Stokes flows and related models}.
In order to answer the question mentioned above, we will review some results of lower bound of decay rate
for the compressible Navier-Stokes equations and related models.

(1)\textbf{Compressible Navier-Stokes equations}.
   When there is no external or internal force involved, there are many results on the problem of long time
   behavior of global smooth solutions to the compressible Navier-Stokes equations.
   For multi-dimensional Navier-Stokes equations, the $H^s(s \ge 3)$ global existence and decay rate of strong solutions
   are obtained in whole space first by Matsumura and Nishida \cite{{Matsumura-Nishida-1980},{Matsumura-Nishida-1979}}
   and the optimal $L^p(p\ge 2)$ decay rate is established by Ponce \cite{Ponce-1985}.
   The long time decay rate of global solution in multi-dimensional half space is also investigated for the
   compressible Navier-Stokes equations by Kagei and Kobayashi \cite{{Kagei-Kobayashi-2002}}.
   Therein, assume the initial data belongs to $L^1$ and the lower frequency of initial data satisfies some condition additionally, the optimal $L^2$ time-decay rate in three dimension is established as
   $$
   (1+t)^{-\frac{3}{4}}\lesssim \|(\rho-\bar{\rho})(t)\|_{L^2}+\|m(t)\|_{L^2} \lesssim (1+t)^{-\frac{3}{4}},
   $$
   where $(\bar{\rho}, 0)$ represent the constant state and $m$ represents the momentum.
   If the initial data belongs to $\dot{B}_{1,\infty}^{-s}(s\in [0,1])$ rather than $L^1$, Li and Zhang \cite{Li-Zhang}
   established the optimal decay rate as
   \begin{equation*}
   (1+t)^{-\frac{3}{4}-\frac{s}{2}}\lesssim
   \|(\rho-\bar{\rho})(t)\|_{L^2}+\|m(t)\|_{L^2}
   \lesssim(1+t)^{-\frac{3}{4}-\frac{s}{2}}.
   \end{equation*}
 For more result about the long time behavior of compressible Navier-Stokes equation, the reader can refer to
 \cite{{Hoff-Zumbrun-1995},{Liu-Wang-1998},{Duan-Ukai-Yang-Zhao-2007},
{Duan-Liu-Ukai-Yang-2007},{Guo-Wang-2012},{Danchin-Xu-2017}}and references therein.

(2)\textbf{Compressible Navier-Stokes-Poission equations}.
   The global existence and optimal decay rate were obtained in \cite{Li-Matsumura-Zhang}
   for the compressible Navier-Stokes-Poisson equation in $\mathbb{R}^3$.
   The influences of the electric field of the internal electrostatic potential force
   governed by the self-consistent Poisson equation on the qualitative behaviors of solutions is analyzed.
   They also addressed the lower bound of decay rate as follows
   \begin{equation*}
   \|(\rho-\bar{\rho})(t)\|_{L^2}\ge c(1+t)^{-\frac{3}{4}},
   \min\{\|m(t)\|_{L^2}, \|\n \Phi (t)\|_{L^2}\}\ge c(1+t)^{-\frac{1}{4}},
   \end{equation*}
   where $m$ and $\Phi$ represent the momentum and electrostatic potential respectively.

(3)\textbf{Compressible Viscoelastic Flows}.
    The global existence of the strong solution was obtained by Hu and Wu \cite{Hu-Wu}
    under the condition that the initial data are close to the constant equilibrium state in $H^2$-framework.
    At the same time, the lower and upper bounds of decay rate were also addressed
    if the initial data satisfies some additional condition. Specifically, they got the lower bound
    of decay rate as follows
    $$
    \min\{\|(\rho-1)(t)\|_{L^2}, \|u(t)\|_{L^2}, \|(F-I)(t)\|_{L^2}\}\ge c(1+t)^{-\frac{3}{4}},
    $$
    where $F$ is a $3\times 3$ metric and denotes the deformation gradient.

(4)\textbf{Compressible MHD and Hall-MHD equation}.
   First of all, under the $H^3$-framework,  Li $\&$ Yu \cite{Li-Yu} and Chen $\&$ Tan \cite{Chen-Tan}
   not only established the global existence of classical solutions, but also obtained
   the time decay rates for the three-dimensional compressible MHD equations
   by assuming the initial data belong to $L^1$ and $L^q( q \in \left[1, \frac{6}{5}\right))$ respectively.
   Later, these results were generalized to the compressible Hall-MHD equations by Fan et al.\cite{Fan-Zhou}.
   The first author and the third author of this paper have provided better decay rate
    for the higher order derivative
   in \cite{Gao-Yao}. We should note that these results do not provide any lower bound of decay rate for the solution.

\textit{To the authors' knowledge, there are no references concerning the lower bound decay rate for the higher
         order derivative of solution for the compressible Navier-Stokes equation and Hall-MHD equation.
         Thus, the first result of this paper is to address this issue}.

\textbf{Notation:}
In this paper, the symbol $\nabla^k $ with an integer $k \ge 0$ stands for the usual any spatial derivatives of order $k$.
For example, we define
$
\nabla^k v=
\left\{\left.\partial_x^\alpha v_i\right||\alpha|=k,~i=1,2,3\right\}
,~v=(v_1, v_2, v_3).
$
We also denote the Fourier transform $\mathcal{F}(f):=\hat{f}$.
Denote by $\Lambda^s$ the pseudo-differential operator defined by
$\Lambda^s f=\mathcal{F}^{-1}(|\xi|^s \hat{u}(\xi))$.
For any $\g \in \mathbb{R}$, denote by $L^p_{\g}(\mathbb{R}^3)(2\le p <+\infty)$ the weighted Lebesgue space
with respect to the spatial variables:
$$
L_{\g}^p(\mathbb{R}^3):= \{f(x):\mathbb{R}^3 \rightarrow \mathbb{R},\
\|f\|_{L_{\g}^p(\mathbb{R}^3)}^p := \int_{\mathbb{R}^3} |x|^{p\g} |f(x)|^p dx<+\infty \}.
$$
Then, we can define the Sobolev space as follows
$$
H^{s}_{\g}(\mathbb{R}^3)\triangleq \{f\in L^2_{\g}(\mathbb{R}^3)|
\|f\|_{H^s_{\g}(\mathbb{R}^3)}^2:= \sum_{k \le s}\|\n^k u\|_{L^2_{\g}(\mathbb{R}^3)}^2<+\infty \}.
$$
Denote $L^2(\mathbb{R}^3):=L^2_0(\mathbb{R}^3)$ and $H^s(\mathbb{R}^3):=H^s_0(\mathbb{R}^3)$
as the usual  Lebesgue space and Sobolev space.
The notation $a \lesssim b$ means that $a \le C b$ for a universal constant $C>0$ independent of
time $t$.
The notation $a \approx b$ means $a \lesssim b$ and $b \lesssim a$.
For the sake of simplicity, we write $\int f dx:=\int _{\mathbb{R}^3} f dx$
and $\|(A, B)\|_X:=\| A \|_X+\| B\|_X$.

First of all, we recall the main results obtained in \cite{Gao-Yao} $\&$ \cite{Fan-Zhou} in the following.

\begin{theo}[\cite{Gao-Yao} $\&$ \cite{Fan-Zhou} ]\label{THM1}
Assume that the initial data $(\rho_0-1,u_0,B_0)\in H^3$ and there exists
a small constant $\d_0>0$ such that
\begin{equation}\label{smallness}
\|(\rho_0-1,u_0, B_0)\|_{H^3} \le \d_0,
\end{equation}
then the solution $(\rho, u, B)$ of compressible Hall-MHD equations
\eqref{eq-MHD}-\eqref{BC1} satisfies for all $t \ge 0$
\begin{equation}\label{uniform}
\|(\rho-1,u,B)(t)\|_{H^3}^2+\int_0^t (\|\nabla \rho(s)\|_{H^2}^2+\|\nabla(u, B)(s)\|_{H^3}^2)ds
\le C\|(\rho_0-1,u_0,B_0)\|_{H^3}^2.
\end{equation}
Furthermore, if $\|(\rho_0-1, u_0, B_0)\|_{L^1}$ is finite additionally,
then it holds on for all $t \ge T_* \ge 0$
\begin{equation}\label{Decay1}
\begin{aligned}
\|\n^k(\rho-1)(t)\|_{H^{3-k}}+\|\n^k u(t)\|_{H^{3-k}}
&\le C(1+t)^{-\frac{3+2k}{4}}, k=0,1,2;\\
\|\p_t \rho (t)\|_{L^2}+\|\p_t u(t)\|_{L^2}
&\le C(1+t)^{-\frac{5}{4}},\\
\|\n^k B(t)\|_{L^{2}}
&\le C(1+t)^{-\frac{3+2k}{4}}, k=0,1,2,3;\\
\|\p_t B(t)\|_{L^2}
&\le  C(1+t)^{-\frac{7}{4}}.
\end{aligned}
\end{equation}
Here $C$ is a positive constant independent of time,
and $T_*$ is a positive constant for the case $k\ge 2$.
\end{theo}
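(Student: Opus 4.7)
The strategy is two-fold: first secure the global existence and uniform $H^3$ estimate \eqref{uniform} by a Matsumura--Nishida style energy method around the equilibrium $(1,0,0)$, and then, under the additional $L^1$ smallness, extract the algebraic decay \eqref{Decay1} by combining spectral analysis of the linearized semigroup with a time-weighted energy bootstrap.

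For the uniform estimate I would set $\sigma = \rho - 1$ and rewrite \eqref{eq-MHD} as a perturbation of the linear system $\p_t \sigma + \nc u = \c N_1$, $\p_t u - \mu \Delta u - (\mu+\nu)\n \nc u + \n \sigma = \c N_2$, $\p_t B - \Delta B = \c N_3$. Local existence in $H^3$ is standard, so to continue the solution globally I would perform $L^2$ estimates on $\n^k$ for $k=0,1,2,3$ and sum. The pressure dissipation of $\n \sigma$ in $H^2$ is recovered by introducing Hoff--Zumbrun type cross terms $\int \n^k \sigma \cdot \n^{k-1} u \, dx$, whose time derivatives generate $\|\n^k \sigma\|_{L^2}^2$ modulo absorbable remainders. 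The delicate step is the Hall term $\nt[((\nt B)\times B)/\rho]$, which nominally raises the derivative count by one; I would integrate by parts at the top level to pair the extra derivative with the Laplacian dissipation $\|\n^{k+1} B\|_{L^2}^2$, using $H^3 \hookrightarrow W^{1,\infty}$ and the smallness of $\|(\sigma,u,B)\|_{H^3}$ to absorb the remainder. A standard continuity argument then yields \eqref{uniform}.

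For the decay I would write the perturbation in Duhamel form $U(t) = e^{t\c L} U_0 + \int_0^t e^{(t-s)\c L} \c N(U(s)) \, ds$. Spectral decomposition of $\c L$ into low and high frequencies gives linear bounds $\|\n^k e^{t\c L} U_0\|_{L^2} \lesssim (1+t)^{-3/4-k/2} \|U_0\|_{L^1} + e^{-ct} \|\n^k U_0\|_{L^2}$ for the $(\sigma,u)$ block and an analogous bound for the $B$ block in which, because its linear part is purely parabolic, the accelerated rate is available for every $k$. A first bootstrap on the ansatz $\sup_{0\le s \le t}(1+s)^{3/4}\|(\sigma,u,B)(s)\|_{L^2} \le C$ closes once the nonlinearity is estimated via the uniform $H^3$ bound. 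To upgrade the decay to higher derivatives I would invoke the time-weighted energy method of Guo--Wang: a differential inequality of the form $\frac{d}{dt}\c E_k + \c D_k \lesssim (\text{lower-order decay})$, combined with the interpolation $\|\n^k f\|_{L^2}^2 \lesssim \|\n^{k-1} f\|_{L^2}^{2(1-\theta)} \|\n^{k+1} f\|_{L^2}^{2\theta}$, produces $\|\n^k (\sigma,u)\|_{H^{3-k}} \lesssim (1+t)^{-3/4-k/2}$ for $k=0,1,2$ and, by an additional induction that exploits the parabolic structure of the magnetic equation, $\|\n^k B\|_{L^2} \lesssim (1+t)^{-3/4-k/2}$ all the way up to $k=3$. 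The time-derivative bounds then follow by substituting these spatial rates into $\p_t \sigma \sim \nc u + (\text{q.t.})$, $\p_t u \sim \Delta u + \n \sigma + (\text{q.t.})$, and $\p_t B \sim \Delta B + \c N_3$, giving $(1+t)^{-5/4}$ for $\p_t(\sigma,u)$ and the faster $(1+t)^{-7/4}$ for $\p_t B$ from its two spatial derivatives of dissipation.

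The principal obstacle is the Hall term in both steps. In the energy step it forces the $H^3$ framework and a careful commutator analysis to avoid derivative loss; in the decay step its quadratic structure $\n B \cdot B$ would normally cap the magnetic decay at the slower Navier--Stokes rate, and only the purely parabolic character of the linearized magnetic equation, together with the Duhamel estimate $\int_0^t (1+t-s)^{-3/4-k/2}(1+s)^{-3/2}\,ds \lesssim (1+t)^{-3/4-k/2}$, allows one to push the optimal rate $(1+t)^{-3/4-k/2}$ for $B$ up to $k=3$.
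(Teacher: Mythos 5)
This theorem is not proved in the paper at all: it is recalled verbatim from \cite{Gao-Yao} and \cite{Fan-Zhou}, so there is no internal proof to compare against. Your outline is, however, a faithful reconstruction of the route those references take: the uniform bound \eqref{uniform} via a Matsumura--Nishida energy scheme with cross terms $\int \n^k\sigma\cdot\n^{k-1}u\,dx$ to recover the density dissipation, the Hall term absorbed into $\|\n^{k+1}B\|_{L^2}^2$ after integration by parts using smallness, and the decay \eqref{Decay1} via linear spectral bounds plus a time-weighted energy bootstrap in the spirit of \cite{Guo-Wang-2012}. The asymmetry you identify (rates for $(\rho-1,u)$ only up to $k=2$, for $B$ up to $k=3$, and $\p_t B$ gaining a full extra half power over $\p_t(\rho,u)$ because its equation carries two derivatives of dissipation) is exactly the structural point.

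One step as written would fail. The convolution bound you invoke at the end, $\int_0^t (1+t-s)^{-3/4-k/2}(1+s)^{-3/2}\,ds \lesssim (1+t)^{-3/4-k/2}$, is false once $3/4+k/2>3/2$: splitting at $s=t/2$, the contribution from $s\in[t/2,t]$ is only $O\big((1+t)^{-3/2}\big)$, so a direct Duhamel iteration with all $k$ derivatives on the semigroup caps the magnetic decay at $(1+t)^{-3/2}$ and cannot reach $k=2,3$. The references (and the present paper, in its treatment of $\|\n^3 B_\d\|_{L^2}$ in Section \ref{lower-bound}) get around this by running the Fourier-splitting or time-weighted energy argument directly on the high-derivative energy inequality, using the already-established decay of lower derivatives as the forcing; alternatively one distributes derivatives between the semigroup and the nonlinearity so the kernel exponent stays below $3/2$. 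Since you do also cite the Guo--Wang weighted-energy induction, the overall architecture is salvageable, but the Duhamel claim as stated should be replaced by that mechanism for $k\ge 2$.
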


Motivated by the work of Oliver and Titi in \cite{Oliver-Titi}, we address the lower bound of decay rate for
the higher order derivatives of solution to the compressible Hall-MHD equations \eqref{eq-MHD}-\eqref{BC1}.
Both the upper and lower bounds of decay rate will give better information on the closeness
of the compressible Navier-Stokes($\&$Hall-MHD) equation and their underlying linear counterpart solutions.

\begin{theo}\label{THM2}
Denote $\vr_0 := \rho_0-1$ and $m_0 := \rho_0 u_0$, assume that the Fourier transform
$\mathcal{F}(\vr_0, m_0, B_0)=(\widehat{\vr_0}, \widehat{m_0},\widehat{B_0})$
satisfies
\begin{equation}\label{Lower-frequency-condition}
|\widehat{\vr_0}|\ge c_0, ~|\widehat{B_0}|\ge c_0,~ \widehat{m_0}=0, ~0 \le |\xi| \ll 1,
\end{equation}
where $c_0$ is a positive constant.
Then, the global classical solution $(\rho, u, B)$ obtained in Theorem \ref{THM1}
has the  decay rates for all $t \ge t_*$
\begin{gather}\label{Decay21}
c_1(1+t)^{-\frac{3+2k}{4}}
\le \|\n^k(\rho-1)(t)\|_{L^2}\le C_1(1+t)^{-\frac{3+2k}{4}}, k=0,1;\\
\label{Decay22}
c_1(1+t)^{-\frac{3+2k}{4}}\le \|\n^k  u(t)\|_{L^{2}}\le C_1(1+t)^{-\frac{3+2k}{4}},k=0,1;\\
\label{Decay23}
c_1(1+t)^{-\frac{3+2k}{4}}\le \|\n^k B(t)\|_{L^2}
\le C_1(1+t)^{-\frac{3+2k}{4}}, k=0,1,2,3.
\end{gather}
Here $t_*$ is a positive large time, $c_1$ and $C_1$ are two positive constants independent of time.
\end{theo}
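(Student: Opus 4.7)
The plan is to adapt the Oliver--Titi/Weng strategy to the compressible setting via the decomposition $U = U_L + U_N$, where $U_L := e^{-t\mathcal{L}}U_0$ is the linearized flow and $U_N := U - U_L$ is the nonlinear correction. The triangle inequality then gives $\|\nabla^k U(t)\|_{L^2} \geq \|\nabla^k U_L(t)\|_{L^2} - \|\nabla^k U_N(t)\|_{L^2}$, so everything reduces to producing a lower bound on the linear part at the rate $(1+t)^{-(3+2k)/4}$ and showing that the nonlinear correction decays strictly faster; the conclusion then follows for $t$ above some threshold $t_\ast$.

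The first step is to set $\varrho = \rho-1$, $m = \rho u$ and rewrite \eqref{eq-MHD} as $\partial_t U + \mathcal{L} U = \mathcal{N}(U)$ with $\mathcal{L}$ the linearization at the equilibrium --- this block-diagonalizes into the standard compressible Navier--Stokes operator acting on $(\varrho, m)$ and the heat Laplacian acting on $B$ --- and with $\mathcal{N}(U)$ collecting all quadratic and higher-order nonlinear terms, in particular the Hall contribution $\nabla\times[((\nabla\times B)\times B)/\rho]$. For the linear lower bound I would pass to Fourier variables, apply the Hodge decomposition of $m$, and use the explicit low-frequency expansion of the compressible Navier--Stokes symbol (eigenvalues $-\alpha|\xi|^2 \pm i|\xi| + O(|\xi|^3)$, whose Green's entries are of size comparable to $e^{-c|\xi|^2 t}$ up to bounded oscillations), together with the heat kernel $e^{-|\xi|^2 t}$ for $B$. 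Restricting integration to the shell $|\xi|\leq \eta/\sqrt{1+t}$, on which $e^{-c|\xi|^2 t}\geq 1/2$, and exploiting the assumption $|\widehat{\varrho_0}|, |\widehat{B_0}|\geq c_0$, $\widehat{m_0}=0$, a direct computation yields
\begin{equation*}
\int_{|\xi|\leq \eta/\sqrt{1+t}} |\xi|^{2k}\,|\widehat{U_L}(\xi,t)|^2\,d\xi \gtrsim (1+t)^{-(3+2k)/2}
\end{equation*}
for each component (for $\varrho$ and $u$ the oscillatory factors average over the shell to give a nondegenerate contribution once $t$ is large; for $B$ this is immediate from the heat kernel), which is equivalent to $\|\nabla^k U_L(t)\|_{L^2}\gtrsim (1+t)^{-(3+2k)/4}$ in the claimed ranges of $k$.

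The second step is to bound $U_N$. Using the standard decay estimate $\|\nabla^k e^{-t\mathcal{L}} f\|_{L^2}\lesssim (1+t)^{-3/4-k/2}\|f\|_{L^1} + e^{-ct}\|\nabla^k f\|_{L^2}$ inside the Duhamel formula gives
\begin{equation*}
\|\nabla^k U_N(t)\|_{L^2} \lesssim \int_0^t (1+t-s)^{-(3+2k)/4}\,\|\mathcal{N}(U(s))\|_{L^1}\,ds + (\text{high-frequency piece}).
\end{equation*}
Splitting the integral at $s=t/2$ and estimating $\|\mathcal{N}\|_{L^1}$ by products of the upper-bound decay rates already supplied by Theorem \ref{THM1} (for instance $\|\varrho\,\nabla u\|_{L^1}\leq \|\varrho\|_{L^2}\|\nabla u\|_{L^2}\lesssim (1+s)^{-3/2}$, and analogously for the magnetic and Hall terms) produces an extra factor $(1+t)^{-\sigma}$ for some $\sigma>0$, giving $\|\nabla^k U_N(t)\|_{L^2}\lesssim (1+t)^{-(3+2k)/4-\sigma}$, strictly faster than the linear lower bound, so $\|\nabla^k U_N\|_{L^2}\leq \tfrac12 \|\nabla^k U_L\|_{L^2}$ for $t\geq t_\ast$.

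The main obstacle is the case $k=3$ for the magnetic field: the Hall term already carries two spatial derivatives on $B$, so a naive estimate of $\nabla^3 B_N$ seems to demand bounds on $\nabla^5 B$, which are unavailable in the $H^3$-framework. The remedy is to exploit the parabolic smoothing of the pure heat semigroup governing $B$: through integration by parts inside the Duhamel integral one shifts derivatives from $B$ onto $e^{(t-s)\Delta}$, paying $(t-s)^{-1/2}$ per derivative; the remaining $s$-integral is then closed using only the upper bounds $\|\nabla^k B(t)\|_{L^2}\lesssim (1+t)^{-(3+2k)/4}$ for $k\leq 3$ from Theorem \ref{THM1}, after a careful split of $[0,t]$ that keeps both the heat-kernel singularity at $s=t$ and the decay in $s$ integrable. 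The same device handles the $\nabla u_N$ estimate arising from $(\nabla\times B)\times B$ via the parabolic smoothing of the Lamé part of $\mathcal{L}$, completing the proof.
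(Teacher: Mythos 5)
Your overall architecture is the same as the paper's: split $U=U_l+U_\d$ with $U_l$ the linearized flow from the same data, get the lower bound $\|\n^k U_l\|_{L^2}\ge c(1+t)^{-\frac{3+2k}{4}}$ from the spectral analysis of the linear symbol (this is Proposition \ref{Decay-Linear}, quoted from \cite{Hu-Wu,Li-Zhang}), show the difference decays strictly faster, and conclude by the triangle inequality for $t\ge t_*$ (plus the extra step, which you do not mention, of transferring the lower bound from the momentum $m=\rho u$ back to $u$ via $\|u\|_{L^2}\ge\|m\|_{L^2}-\|\vr u\|_{L^2}$). However, your treatment of $\n^k U_\d$ for $k\ge1$ has two genuine problems. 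First, as displayed, the Duhamel kernel $(1+t-s)^{-\frac{3+2k}{4}}$ convolved with $\|\mathcal{N}(U(s))\|_{L^1}\lesssim(1+s)^{-\frac32}$ gives only $(1+t)^{-\frac{3+2k}{4}}$, i.e.\ no extra factor $(1+t)^{-\sigma}$: the piece $\int_0^{t/2}$ already saturates the linear rate. To gain the needed $\sigma>0$ you must exploit that the nonlinearity is in divergence form ($-\nc S_1$, $\nt S_2$), which upgrades the kernel to $(1+t-s)^{-\frac{5+2k}{4}}$ via the factor $\||\xi|^{-1}\mathcal{F}(\nc S_1)\|_{L^\infty}\le\|S_1\|_{L^1}$; the paper does exactly this in \eqref{233} and \eqref{252} for $k=0$.

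The more serious gap is the case $k=3$ for $B_\d$. Your parabolic-smoothing remedy does not close: shifting one derivative onto $e^{(t-s)\Delta}$ costs $(t-s)^{-1/2}$ but still requires $\|\n^3 S_2\|_{L^2}$, which contains $B\,\n^4 B$ and hence $\|\n^4 B(s)\|_{L^2}$ \emph{pointwise in time} — unavailable in the $H^3$-framework, where Theorem \ref{THM1} only controls $\|\n^3 B\|_{L^2}$ pointwise and $\n^4 B$ in $L^2_t L^2_x$. Shifting two derivatives costs $(t-s)^{-1}$, which is not integrable at $s=t$, and no Hölder/interpolation pairing of the singular kernel against the merely square-integrable-in-time quantity $\|\n^4 B\|_{L^2}$ repairs this. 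The paper avoids Duhamel entirely for $k\ge1$: it runs an $L^2$ energy estimate for $\n^k(\vr_\d,m_\d)$ and $\n^k B_\d$ combined with Schonbek's Fourier splitting, so that for $k=3$ the problematic term enters as $\|B\|_{L^\infty}^2\|\n^4 B\|_{L^2}^2\lesssim(1+t)^{-3}\|\n^4 B\|_{L^2}^2$ on the right of a differential inequality, and after multiplying by $(1+t)^6$ it is absorbed by the separately proved time-weighted dissipation bound $\int_{T_*}^t(1+\tau)^3\|\n^4 B\|_{L^2}^2\,d\tau\le C$ (estimate \eqref{Claim-Decay}). Some device of this integrated-in-time type is the missing idea in your argument; without it the $k=3$ bound for $B_\d$, and hence the lower bound in \eqref{Decay23} for $k=3$, is not established.
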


\begin{rema}
The lower bounds of decay rates \eqref{Decay21}-\eqref{Decay22} for the derivatives of
density, velocity and magnetic field to the compressible Navier-Stokes and Hall-MHD equations
are obtained for the first time.
\end{rema}

\begin{rema}
Although we only established the time decay rates under the $H^3$-framework in Theorem \ref{THM2},
the method here can be applied to the $H^N(N\ge3)$-framework.
More precisely, assume that the initial data $\|(\rho_0-1,u_0,B_0)\|_{L^1 \cap H^N }$ is small.
Under the condition \eqref{Lower-frequency-condition}, the global classical
solution $(\rho, u, B)$ of the system \eqref{eq-MHD}
has the  decay rates for all $t \ge t_*$
\begin{gather*}
c_1(1+t)^{-\frac{3+2k}{4}}\le \|\n^k(\rho-1)(t)\|_{L^2}
\le C_1(1+t)^{-\frac{3+2k}{4}}, k \in [0, N-2];\\
c_1(1+t)^{-\frac{3+2k}{4}}\le \|\n^k  u(t)\|_{L^2}\le C_1(1+t)^{-\frac{3+2k}{4}}, k \in [0, N-2];\\
c_1(1+t)^{-\frac{3+2k}{4}}\le \|\n^k B(t)\|_{L^2}
\le C_1(1+t)^{-\frac{3+2k}{4}}, k \in [0, N].
\end{gather*}
Here $t_*$ is a positive large time, $c_1$ and $C_1$ are two positive constants independent of time.
\end{rema}

\begin{rema}
After the completion of work in Theorem \ref{THM2}, the first author in \cite{Chen-Pan-Tong}
told us that they have also addressed the sharp time decay rates(including lower and upper bound decay rates)
for the isentropic Navier-Stokes system in three dimensional whole space under the $H^3-$framework.
However, these two works are done independently, and investigated the isentropic Navier-Stokes
and compressible Hall-MHD equations respectively.
\end{rema}

Next, we will establish the lower bound of decay rate for the time derivatives
of solution to the compressible Hall-MHD equation \eqref{eq-MHD}.

\begin{theo}\label{THM3}
Assume the condition \eqref{Lower-frequency-condition} holds on,
then the global classical solution $(\rho, u, B)$ obtained in Theorem \ref{THM1}
satisfies for all $t \ge t_*$
\begin{align}\label{Decay31}
c_1(1+t)^{-\frac{5}{4}}&\le \|\p_t u\|_{L^2} \le C_1(1+t)^{-\frac{5}{4}},\\
\label{Decay32}
c_1(1+t)^{-\frac{7}{4}}&\le \|\p_t B\|_{L^2} \le C_1(1+t)^{-\frac{7}{4}}.
\end{align}
Furthermore, if there exists a small constant $\d_1$ such that $\|u_0\|_{L^1}\le \d_1$,
it holds on for all $t \ge t_*$
\begin{align}\label{Decay33}
c_1(1+t)^{-\frac{5}{4}}&\le \|\p_t \rho\|_{L^2}   \le C_1(1+t)^{-\frac{5}{4}},\\
\label{Decay34}
c_1(1+t)^{-\frac{5}{4}}&\le \|{\rm div} u\|_{L^2} \le C_1(1+t)^{-\frac{5}{4}}.
\end{align}
Here $t_*$ is a positive large time, $c_1$ and $C_1$ are two positive constants independent of time.
\end{theo}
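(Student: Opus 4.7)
The upper bounds in \eqref{Decay31}--\eqref{Decay34} are already contained in Theorem \ref{THM1}, so I concentrate on the matching lower bounds. The unifying idea is, for each time derivative or divergence, to substitute directly from the evolution equations, isolate the principal linear spatial term, invoke the sharp spatial lower bounds of Theorem \ref{THM2}, and control the remainder using the upper decay estimates of Theorem \ref{THM1} together with H\"older and Gagliardo--Nirenberg inequalities.

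For \eqref{Decay32}, the magnetic equation gives
$\partial_t B=\Delta B+\nabla\times(u\times B)-\nabla\times\!\left[\tfrac{(\nabla\times B)\times B}{\rho}\right]$, so the reverse triangle inequality yields $\|\partial_t B\|_{L^2}\ge \|\Delta B\|_{L^2}-\|\mathrm{NL}\|_{L^2}$. Theorem \ref{THM2} with $k=2$ supplies $\|\Delta B\|_{L^2}\ge c(1+t)^{-7/4}$. Each nonlinear term is a product of $u$, $B$, $\rho-1$ with one or two spatial derivatives; splitting by H\"older (e.g.\ $\|\nabla B\|_{L^3}\|B\|_{L^6}$, $\|\nabla^2 B\|_{L^3}\|B\|_{L^6}$) and interpolating via Gagliardo--Nirenberg against the decay of Theorem \ref{THM1} produces decay $O((1+t)^{-11/4})$ or faster, which is negligible compared to $(1+t)^{-7/4}$ for $t\ge t_*$. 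For \eqref{Decay31}, the momentum equation reads
$\rho\,\partial_t u=-\nabla P(\rho)+\mu\Delta u+(\mu+\nu)\nabla\mathrm{div}\,u-\rho u\cdot\nabla u+(\nabla\times B)\times B$.
Since $\rho\approx 1$ and $P'(\rho)\approx P'(1)=1$, the principal term on the right is $-\nabla\rho$, and Theorem \ref{THM2} furnishes $\|\nabla\rho\|_{L^2}\ge c(1+t)^{-5/4}$. Theorem \ref{THM1} with $k=2$ bounds the viscous contributions $\|\Delta u\|_{L^2}+\|\nabla\mathrm{div}\,u\|_{L^2}$ by $(1+t)^{-7/4}$, while the convective and Lorentz terms are $O((1+t)^{-11/4})$ by the same interpolation scheme; choosing $t_*$ large absorbs them.

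For \eqref{Decay33}--\eqref{Decay34}, the continuity equation gives $\partial_t\rho+\mathrm{div}\,u=-(\rho-1)\mathrm{div}\,u-u\cdot\nabla\rho$, whose right-hand side is dominated by $\|\rho-1\|_{L^\infty}\|\mathrm{div}\,u\|_{L^2}+\|u\|_{L^6}\|\nabla\rho\|_{L^3}$; Theorem \ref{THM1} combined with Gagliardo--Nirenberg gives $\|\rho-1\|_{L^\infty}\lesssim(1+t)^{-3/2}$ and $\|\nabla\rho\|_{L^3}\lesssim(1+t)^{-3/2}$, so $\|\partial_t\rho+\mathrm{div}\,u\|_{L^2}=o((1+t)^{-5/4})$. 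Consequently the two bounds \eqref{Decay33}--\eqref{Decay34} are equivalent up to a negligible error, and it suffices to prove either one. This is the main obstacle: no single spatial quantity in Theorem \ref{THM2} dominates $\|\mathrm{div}\,u\|_{L^2}$ from below, because the divergence-free component of $u$ may dominate $\|\nabla u\|_{L^2}$. To circumvent this, I plan a linear--nonlinear splitting $u=u^L+u^N$, where $(\rho^L-1,u^L)$ solves the linearized compressible Navier--Stokes system with data $(\rho_0-1,u_0)$. Sharp Fourier/spectral analysis of the linearized semigroup, exploiting the coupling $\partial_t(\rho^L-1)+\mathrm{div}\,u^L=0$ together with the low-frequency hypothesis $|\widehat{\rho_0-1}|\ge c_0$ of \eqref{Lower-frequency-condition}, yields $\|\mathrm{div}\,u^L(t)\|_{L^2}\ge c(1+t)^{-5/4}$: the pressure gradient $\nabla(\rho^L-1)$ necessarily excites a nontrivial compressible component of $u^L$ at low frequencies. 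The extra hypothesis $\|u_0\|_{L^1}\le\delta_1$ is used precisely to control the low-frequency part of the Duhamel nonlinear remainder $u^N$ (recall that the momentum $\rho u$ is near-conserved, with a defect bounded using $L^1$ smallness), giving $\|\mathrm{div}\,u^N(t)\|_{L^2}=o((1+t)^{-5/4})$; a final triangle inequality closes the lower bound.
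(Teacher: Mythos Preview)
Your treatment of \eqref{Decay31} and \eqref{Decay32} is the same as the paper's: isolate the dominant linear spatial term ($\nabla\varrho$ for $\partial_t u$, $\Delta B$ for $\partial_t B$), invoke the lower bounds of Theorem \ref{THM2}, and show the remainder decays strictly faster using Theorem \ref{THM1}.

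For \eqref{Decay33}--\eqref{Decay34} you take a genuinely different route. The paper does not split $u$ into linear and nonlinear parts; instead it uses the Helmholtz identity $\|\nabla u\|_{L^2}^2=\|\mathrm{div}\,u\|_{L^2}^2+\|\nabla\times u\|_{L^2}^2$ together with the lower bound $\|\nabla u\|_{L^2}\ge c(1+t)^{-5/4}$ from Theorem \ref{THM2}, and then observes that the curl annihilates the pressure gradient, so $\nabla\times u$ obeys the \emph{decoupled} parabolic equation $\partial_t(\nabla\times u)-\mu\Delta(\nabla\times u)=\nabla\times G_2$. A Duhamel estimate then yields $\|\nabla\times u\|_{L^2}\le C(\delta_0+\delta_1)(1+t)^{-5/4}$, where the hypothesis $\|u_0\|_{L^1}\le\delta_1$ enters precisely through the \emph{initial-data} contribution $e^{\mu t\Delta}(\nabla\times u_0)$, not through any nonlinear remainder. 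Subtracting gives $\|\mathrm{div}\,u\|_{L^2}\ge c(1+t)^{-5/4}$.

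Your linear--nonlinear splitting is a viable alternative, but your account of where $\delta_1$ is used is off: $u^N$ has zero initial data, so $\|u_0\|_{L^1}$ cannot appear in its Duhamel bound. In fact, if you carry your idea out in the $(\varrho,m)$ variables already used for Theorem \ref{THM2}, the paper's estimate $\|\nabla m_\delta\|_{L^2}\le C(1+t)^{-7/4}$ controls $\|\mathrm{div}\,m_\delta\|_{L^2}$, while a direct spectral computation (a mild extension of Proposition \ref{Decay-Linear}, using $\widehat{m_0}=0$ and $|\widehat{\varrho_0}|\ge c_0$ at low frequency) gives $\|\mathrm{div}\,m_l\|_{L^2}\ge c(1+t)^{-5/4}$. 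Your approach, executed this way, appears to dispense with the extra smallness assumption $\delta_1$ entirely---something the paper's curl argument cannot do.
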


\begin{rema}
The lower bounds of decay rates for the time derivatives of density, velocity and magnetic field
for the compressible Navier-Stokes and Hall-MHD equations in $L^2-$norm are obtained for the first time.
\end{rema}

Now we turn to the weighted case.
For the incompressible Navier-Stokes equation, the weighted decay rate is
also widely studied by many mathematicians for strong solution in whole space.
\textbf{The basic question is the following:} Assuming
\begin{equation}\label{assumption01}
\|u(\cdot, t)\|_{L^2}=\mathcal{O}(t^{-\theta}),
\end{equation}
what is the rate of decay of $\|\x^{\g} u\|_{L^2}$, or more generally, the rate of
decay of $\|\x^{\g} u\|_{L^p}$.
In other words, we are interested in to what extent the temporal decay of $L^2$ norm
influence the decay rate of the weighted norm of velocity.
The main obstacle in obtaining sharp rate is the presence of the pressure term in the equation.
The study of space-time decay rate close to the heat equation was initiated by Takahashi \cite{Takahashi}
for the Navier-Stokes equation with nonzero forcing and zero initial data.
Then, Amrouche et al.\cite{Amrouche-Girault-Schonbek-Schonbek} studied
the pointwise behavior of solutions themselves and their derivatives with nonzero initial data but zero external force,
refined the method developed in \cite{Schonbek-Schonbek}, and deduced some space-time decay estimate.
However, their decay results are different from those of the heat equation.
Based on explicit solutions for the heat equation(refer to \cite{Amrouche-Girault-Schonbek-Schonbek}),
the conjectured optimal rate of decay for the incompressible Navier-Stokes equation is
\begin{equation}\label{assumption02}
\|\x^{\g} u(\cdot, t)\|_{L^2}=\mathcal{O}(t^{-\theta+{\g}/2}).
\end{equation}
Miyakawa \cite{Miyakawa} established the sharp space-time decay rate for solutions to the Navier-Stokes equation.
Based on a parabolic interpolation inequality, Kukavica \cite{Kukavica-01} also obtained the sharp decay rate
for any weighted norm of higher order, i.e., the quantity $\|\x^{\g} D^k_x u\|_{L^2}$ if the decay rates
\eqref{assumption01} and \eqref{assumption02} hold on.
This was further improved by Kukavica and Torres \cite{Kukavica-Torres-06} in sense of extending the weighted exponent.
Later, the assumption decay rate \eqref{assumption02}  was also verified in
their subsequent papers \cite{{Kukavica-Torres-07},{Kukavica-09}}.
Based on the series of papers \cite{{Kukavica-01},{Kukavica-Torres-06},{Kukavica-Torres-07},{Kukavica-09}},
Weng \cite{Weng-JFA} also obtained the sharp space-time decay for the incompressible viscous
resistive MHD and Hall-MHD equations.
\textit{To the authors' knowledge, there are no references addressing the space-time decay for the compressible
Navier-Stokes, viscous resistive MHD and  Hall-MHD equations.}


The last main result in this article is devoted to weighted decay rates of solution of \eqref{eq-MHD},
which is inspired by the work of Kukavica and Torres
\cite{{Kukavica-01},{Kukavica-Torres-06},{Kukavica-Torres-07},{Kukavica-09}},
and Weng \cite{Weng-JFA}, where the weighted decay estimates are established for the incompressible flow.

\begin{theo}\label{THM4}
Let $(\rho, u, B)$ be the strong solution to equations \eqref{eq-MHD}-\eqref{BC1}
with initial data $(\rho_0-1, u_0, B_0)$ belonging to the Schartz class.
Under the assumptions in Theorem \ref{THM1}, then it holds on
\begin{align}\label{Decay41}
\|\n^k (\rho-1)(t)\|_{H^{3-k}_\g}+\|\n^k u(t)\|_{H^{3-k}_\g}
&\le Ct^{-\frac{3}{4}+\g-\frac{k}{2}},\quad k=0,1,2;\\
\label{Decay42}
\|\n^k \p_t \rho(t)\|_{L^2_\g}+\|\n^k \p_t u(t)\|_{L^2_\g}
&\le C t^{-\frac{5}{4}+\g-\frac{k}{2}},\quad k=0,1;\\
\label{Decay43}
 \|\n^k B(t)\|_{L^2_\g}
&\le Ct^{-\frac{3}{4}+\frac{\g}{2}-\frac{k}{2}}, \quad k=0,1,2,3;\\
\label{Decay44}
\|\n^k \p_t B(t)\|_{L^2_\g}
&\le C t^{-\frac{7}{4}+\frac{\g}{2}-\frac{k}{2}},\quad k=0,1;
\end{align}
for all $\g \ge 0$.
Furthermore, if the Fourier transform $\widehat{B_0}=\mathcal{F}(B_0)$
satisfies $|\widehat{B_0}|\ge c_0$ for all $0\le |\xi| \ll 1$
with $c_0$ a positive constant, then the magnetic field given by Theorem \ref{THM1}
satisfies for $t \ge t_0$ with $t_0>0$ a sufficient large time that
\begin{gather}\label{Decay45}
c_1 t^{-\frac{7}{4}+\frac{\g}{2}} \le \|\p_t B(t)\|_{L^2_\g}
\le C_1 t^{-\frac{7}{4}+\frac{\g}{2}},\\
\label{Decay46}
c_1 t^{-\frac{3}{4}+\frac{\g}{2}-\frac{k}{2}}
\le \|\n^k B(t)\|_{L^2_\g} \le C_1 t^{-\frac{3}{4}+\frac{\g}{2}-\frac{k}{2}},
\end{gather}
for all $\g \in [0, 1]$, and $k=0,1,2$.
Here $c_1$ and $C_1$ are positive constants independent of time.
\end{theo}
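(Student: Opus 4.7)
The overall plan is to lift the unweighted decay rates of Theorem \ref{THM1} to weighted norms by a weighted energy method coupled with a weighted parabolic interpolation inequality, following the scheme developed by Kukavica, Kukavica-Torres and Weng for incompressible flows. The argument would proceed by induction on the weight exponent $\g$, increasing it in small increments so that commutator terms involving $\n(\x^{2\g})$ are controlled by the weighted bound from the previous induction step combined with the temporal decay already known from Theorem \ref{THM1}.

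For the upper bounds \eqref{Decay41}--\eqref{Decay44}, set $\vr:=\rho-1$ and write the system as a perturbation around the equilibrium $(1,0,0)$. Multiplying the $\vr$-, $u$-, and $B$-equations respectively by $\x^{2\g}\vr$, $\x^{2\g}u$, and $\x^{2\g}B$ and integrating by parts yields weighted energy identities whose left-hand sides control $\frac{d}{dt}\|\x^\g(\vr, u, B)\|_{L^2}^2$ together with weighted dissipation, and whose right-hand sides consist of weight-commutators $\int \n(\x^{2\g})\cdot(\ldots)\,dx$, the pressure contribution, and nonlinear interactions (including the Hall term). The commutators are controlled via Hardy-type inequalities and the weighted parabolic interpolation
\bqq
\|\x^\g f\|_{L^2}^{1+\frac{2}{3}} \lesssim \|f\|_{L^1}^{\frac{2}{3}}\|\x^\g \n f\|_{L^2},
\eqq
valid when $\x^\g f \in L^2$ and $f \in L^1$. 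Inserting the unweighted $L^2$ decay rates of Theorem \ref{THM1} into the right-hand side and solving the resulting differential inequality produces the rate $t^{-3/4+\g}$ for $(\vr, u)$, matching \eqref{Decay41}. Higher spatial derivatives and the time derivatives \eqref{Decay42}, \eqref{Decay44} are treated by differentiating the equations (or expressing $\p_t$ through spatial terms, e.g.\ $\p_t B = \Delta B + \nt(u\times B) - \nt[(\nt B)\times B/\rho]$) and running the same scheme, using the already-established lower-order weighted bounds as forcing.

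The magnetic field estimate \eqref{Decay43} differs only in its treatment of the Hall term, which is quadratic in $B$ with one extra spatial derivative. When the weight $\x^{2\g}$ is distributed across the two copies of $B$, each factor can absorb at most weight $\x^\g$, which effectively halves the weight-gain per induction step and accounts for the slower rate $t^{-3/4+\g/2-k/2}$ in \eqref{Decay43}.

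The lower bounds \eqref{Decay45}--\eqref{Decay46} would be established by the linear/nonlinear decomposition $B = B_L + B_N$, where $B_L(t) := e^{t\Delta}B_0$ and $B_N := B - B_L$ solves a perturbed heat equation with forcing $\nt(u\times B) - \nt[(\nt B)\times B/\rho]$. Under the spectral assumption $|\f{B_0}|\ge c_0$ near $\xi = 0$, explicit Fourier/Gaussian computations would yield $\|\x^\g \n^k B_L\|_{L^2} \gtrsim t^{-3/4+\g/2-k/2}$ for $\g \in [0,1]$ and $k=0,1,2$, and correspondingly $\|\p_t B_L\|_{L^2_\g}\gtrsim t^{-7/4+\g/2}$ from $\p_t B_L = \Delta B_L$. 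It then remains to show $B_N$ decays strictly faster in the weighted norm via Duhamel's formula and the weighted upper bounds just established; the triangle inequality then delivers the claimed lower bounds. I expect the main obstacle to be precisely this last step: producing a strictly faster decay for $B_N$. The Hall term's structure (quadratic in $B$ with one derivative) forces a delicate redistribution of the weight $\x^{2\g}$ between the two factors of $B$ inside the Duhamel integral, and this redistribution is only feasible for $\g \in [0,1]$, which accounts for the restriction in \eqref{Decay45}--\eqref{Decay46}. Balancing the unweighted decay from Theorem \ref{THM1} (needed to control one factor) against the weighted upper bound \eqref{Decay43} (needed for the other) inside this Duhamel estimate is the technically most involved computation in the proof.
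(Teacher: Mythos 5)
Your proposal contains two substantive problems, one conceptual and one structural.

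First, your explanation of the exponents is backwards. You attribute the rate $t^{-3/4+\g/2-k/2}$ for $B$ in \eqref{Decay43} to a ``halving of the weight-gain'' caused by distributing $\x^{2\g}$ over the two copies of $B$ in the Hall term. In fact $t^{-3/4+\g/2-k/2}$ is the \emph{better} (heat-equation) rate, identical to what Kukavica--Torres and Weng obtain for purely parabolic incompressible systems; the degradation occurs for $(\vr,u)$, whose rate $t^{-3/4+\g}$ is worse, and it is caused by the pressure/hyperbolic coupling, not by the Hall nonlinearity. Concretely, the weighted energy identity for the momentum equation produces a commutator $I_1=2\g\int\x^{2\g-2}x_j\p_i x_j\,\vr\,u_i\,dx$ bounded by $\|\vr\|_{L^2_\g}\|u\|_{L^2_{\g-1}}$, and feeding this into the nonlinear ODE lemma (Lemma \ref{space-time-lemma}) forces the exponent $\lambda_1=2\g-\tfrac32$ rather than $\g-\tfrac32$. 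Related to this, your plan of running a weighted parabolic interpolation inequality uniformly over all unknowns cannot work for $\vr$: the continuity equation is a transport equation with no dissipation, so there is no $\|\x^\g\n\vr\|_{L^2}$ term to interpolate against. The paper explicitly flags this as the main obstruction and replaces the parabolic interpolation by the Fourier splitting method combined with the cross terms $\int\x^{2\g}\n^l u\cdot\n^{l+1}\vr\,dx$, which manufacture dissipation of $\vr$ out of the momentum equation (see \eqref{claim5} and \eqref{2133}--\eqref{2136}). Your proposal is silent on how the density acquires any weighted decay at all.

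Second, for the lower bounds \eqref{Decay45}--\eqref{Decay46} you propose a linear/nonlinear splitting $B=B_L+B_N$ with a Duhamel estimate on $\x^\g B_N$, and you yourself identify the step you cannot close (showing $B_N$ decays strictly faster in $L^2_\g$). The paper avoids this entirely with an elementary duality argument: by H\"older, $\|f\|_{L^2}\le\|f\|_{L^2_\g}^{1/2}\|f\|_{L^2_{-\g}}^{1/2}$, and by Hardy's inequality $\|f\|_{L^2_{-\g}}\le C\|f\|_{\dot H^\g}$ for $\g<3/2$. Combining the unweighted lower bound $\|B\|_{L^2}\ge Ct^{-3/4}$ from Theorem \ref{THM2} with the upper bound $\|B\|_{\dot H^\g}\le Ct^{-3/4-\g/2}$ immediately gives $\|B\|_{L^2_\g}\ge Ct^{-3/4+\g/2}$, and similarly for $\n^k B$ and $\p_t B$. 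The restriction $\g\in[0,1]$ for $k=2$ comes from needing $\|\n^2 B\|_{\dot H^\g}$, i.e.\ $2+\g\le 3$ derivatives inside the $H^3$ framework, together with the Hardy constraint $\g<3/2$ --- not from any redistribution of weights in the Hall term. Your route is not obviously wrong, but it is substantially harder than necessary and, as written, incomplete at exactly the point where the paper's argument is trivial.
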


\begin{rema}
For the incompressible flows(see \cite{{Kukavica-01},{Kukavica-Torres-06},{Kukavica-Torres-07},{Kukavica-09},{Weng-JFA}}),
the upper bound of decay rate for the $k(\ge0)-$th order
spatial derivative of velocity converges to zero in weighted space $L^2_\g(\g \in [0, 5/2))$
is $t^{-\frac{3}{4}+\frac{\g}{2}-\frac{k}{2}}$.
This decay rate is better than the rate for the compressible
flows obtained in \eqref{Decay41}.
However, the weighted index $\g$ in \eqref{Decay41} is required to be greater than zero
rather than $[0, 5/2)$ for the incompressible Navier-Stokes and Hall-MHD equations.
\end{rema}

\begin{rema}
By the Gagliardo-Nirenberg and interpolation inequalities, we can also
obtain the decay rate estimate in weighted space $L^p_\g(2\le p \le \infty)$
for the solutions of the compressible Hall-MHD equation \eqref{eq-MHD}.
\end{rema}

\begin{rema}
It is  shown that the lower and upper bound of decay rates for the time derivative
and $k(\in [0, 2])-$th order spatial derivatives of magnetic field converging to zero in weighted space
$L^2_\g(\g \in [0, 1])$ are $t^{-\frac{7}{4}+\frac{\g}{2}}$
and $t^{-\frac{3}{4}+\frac{\g}{2}-\frac{k}{2}}$ respectively.
As far as we know, the lower bounds of decay rate for the magnetic field in \eqref{Decay45}
and \eqref{Decay46} are given for the first time.
\end{rema}

Now we comment on the analysis in this paper.
Firs of all, we address the lower bound of decay rate for the higher order spatial derivative of solution
to the compressible Hall-MHD equation \eqref{eq-MHD}.
Let $(U, U_l)$ be the solution of nonlinear and linearized problem respectively.
Define the difference $U_\d:= U-U_l$, then we have for any integer $k$:
$
\|\n^k U\|_{L^2} \ge  \|\n^k U_l\|_{L^2}-\|\n^k U_\d\|_{L^2}.
$
If the solutions $U_l$ and $U_\d$ obey the assumptions
\begin{equation}\label{decay-a}
\|\n^k U_l\|_{L^2} \ge C_{l,k} (1+t)^{-\alpha_{l,k}},
\quad \|\n^k U_\d\|_{L^2} \le C_{\d,k} (1+t)^{-\alpha_{\d, k}},
\end{equation}
where $\alpha_{l,k} \le \alpha_{\d, k}+\var$.
If $\var>0$, then we have for large time $t$
$$
\|\n^k U\|_{L^2} \ge
C_{l,k} (1+t)^{-\alpha_{l,k}}-\frac{C_{\d, k} }{(1+t)^{\var}}(1+t)^{-(\alpha_{\d,k}+\var)}
\ge \frac{C_{l,k}}{2} (1+t)^{-\alpha_{l,k}}.
$$
If $\var=0$ and $C_{\d,k}$ is a small constant, then we have
$
\|\n^k U\|_{L^2} \ge \frac{1}{2}{C_{l,k}}(1+t)^{-\alpha_{l,k}}.
$
Since the lower bound of decay rate \eqref{decay-a} for the linearized part
can be obtained easily just by addressing the spectral analysis to
the differential operator of linearized part, see Proposition \ref{Decay-Linear} in section \ref{lower-bound}.
Then, the essential setup of lower bound decay rate for original nonlinear problem
is to obtain the upper bound of decay rate \eqref{decay-a} for the difference $\n^k U_\d$.
The upper bound of decay rate for $U_\d$ can be obtained easily just using the Duhamel principle formula
and upper bound decay estimate \eqref{up-decay}.
The upper bound of decay rate for $\n^k U_\d(k \ge 1)$ can be obtained by using the Fourier Splitting method
by Schonbek \cite{Schonbek} rather than the Gevrey estimates (see \cite{{Oliver-Titi},{Foias-Teman},{Weng-JDE}}).
This method has been applied to obtain the decay rate for higher order spatial derivative
of solution to the incompressible Navier-Stokes equation and compressible nematic liquid crystal
flows in whole space, see \cite{{Schonbek-Wiegner},{Schonbek-CPDE-1995},{Gao-Tao-Yao-JDE}}.

Next, the lower bound of decay rate for the time derivative of velocity and magnetic field can be obtained
by using the lower bound of first order spatial derivative and equation \eqref{eq-MHD}.
If we use the transport equation to obtain the lower bound of decay rate for the time derivative of density,
we need to get the lower bound for the divergence of velocity(i.e., ${\rm div} u$).
To achieve this target, we need to assume the smallness for the initial velocity in $L^1$.

Finally, we address the upper decay rate for the solution of problem \eqref{eq-MHD} in weighted space.
The upper decay rate of density, velocity and magnetic field in weighted space can be obtained by using the lemma
\ref{space-time-lemma} respectively.
To obtain the space-time decay rate for higher order spatial derivative, one method is to use the parabolic interpolation
inequality, see \cite{{Kukavica-01},{Weng-JFA}}.
However, the equations \eqref{eq-MHD} is a hyperbolic-parabolic coupling one.
For the integer $\g$, we use the Fourier Splitting method(see \cite{Schonbek}) and induction argument
with respect to weighted index $\g$ to obtain the upper decay rate for higher order spatial derivative in weighed space.
As for the real number $\g$, the upper bound of decay rate can be obtained just by using the interpolation inequality.
The lower bound of space-time decay rate is also addressed for magnetic field itself and higher order derivative.

The rest of this paper is organized as follows.
In section \ref{lower-bound}, we establish the lower bound of decay rate for the solution
itself and derivative.
In section \ref{space-time}, we address the upper bound of decay rate for the solution
itself and spatial derivative in weighted space.
Some technical estimates used in sections \ref{lower-bound} and \ref{space-time}
will be proved in section \ref{technical}.

\section{Lower Bound of Decay Rate}\label{lower-bound}

In this section, we will address the lower bound of decay rate for the solution itself and derivative.
To this end, the upper bound of decay rate for the difference between the nonlinear and linearized parts
will be established. Finally, we study the lower bound of decay rate for the solution of
higher order spatial derivative and time derivative.

\subsection{Lower Bound of Decay Rate for Spatial Derivative}

In this subsection, we will establish lower bound of decay rate for
the higher order spatial derivative of classical solution to the compressible Hall-MHD equation \eqref{eq-MHD}.
For the sake of simplicity, we assume $P'(1)=1$ as mentioned before.
Let us denote $\vr := \rho-1$ and $m := \rho u$,
we rewrite \eqref{eq-MHD} in the perturbation form as
\begin{equation}\label{eq-per}
\left\{
\begin{aligned}
&\vr_t+{\rm div}m=0,\\
&m_t-\mu \Delta m-(\mu+\nu)\n {\rm div} m+\nabla \varrho=-{\rm div} S_1,\\
&B_t-\Delta B=\nt S_2, \quad {\rm div}B=0,
\end{aligned}
\right.
\end{equation}
where the function $S_1=S_1(\vr, u, B)$ and $S_2=S_2(\vr, u, B)$ are defined as
\begin{equation}\label{force1}
\left\{
\begin{aligned}
S_1=
&(1+\vr)u\otimes u+\mu \n (\vr u)+(\mu+\nu){\rm div}(\vr u)\mathbb{I}_{3\times 3}\\
&+(P(1+\vr)-P(1)-\vr)\mathbb{I}_{3\times 3}+\frac{1}{2}|B|^2\mathbb{I}_{3\times 3}-B \otimes B;\\
S_2=&u \times B-\frac{(\nt B)\times B}{1+\vr}.
\end{aligned}
\right.
\end{equation}
The initial data is given as
\begin{equation}\label{initial1}
\left.(\varrho, m, B)(x,t)\right|_{t=0}=(\varrho_0, m_0, B_0)(x)
\rightarrow(0,0,0) \quad {\text {as} } \quad |x|\rightarrow \infty.
\end{equation}
In order to obtain the lower decay estimate, we need to analysis the linearized part:
\begin{equation}\label{eq-linearized}
\left\{
\begin{aligned}
&\p_t \vr_l+{\rm div} m_l=0,\\
&\p_t m_l-\mu \Delta m_l-(\mu+\nu)\n {\rm div} m_l+\n \vr_l=0,\\
&\p_t B_l-\Delta B_l=0, \quad {\rm div} B_l=0.
\end{aligned}
\right.
\end{equation}
with the initial data
\begin{equation}\label{initial2}
\left.(\vr_l, m_l, B_l)(x,t)\right|_{t=0}=(\vr_0, m_0, B_0)(x)
\rightarrow(0,0,0) \quad {\text {as} } \quad |x|\rightarrow \infty.
\end{equation}
Here the initial data for the linearized part \eqref{eq-linearized} are the same as
the nonlinear part \eqref{eq-per}.
The following properties on the decay in time, which
can be found in \cite{{Hu-Wu},{Li-Zhang}}.
\begin{prop}\label{Decay-Linear}
Let $k\ge 0$ be an integer, and assume the Fourier transform
$\mathcal{F}(\vr_0, m_0, B_0):=(\widehat{\vr_0}, \widehat{m_0},\widehat{B_0})$
satisfies
$|(\widehat{\vr_0}, \widehat{m_0}, \widehat{B_0})|\le C|\xi|^\eta$ for $0\le |\xi| \ll 1$.
Then, the solution $(\vr_l, m_l, B_l)$ of linearized system \eqref{eq-linearized}
has the following estimates for all $t \ge 0$
\begin{equation}\label{up-decay}
\|\nabla^k(\vr_l, m_l, B_l)(t)\|_{L^2}
\le C(1+t)^{-(\frac{3}{4}+\frac{\eta}{2}+\frac{k}{2})}
    (\|(\widehat{\vr_0}, \widehat{m_0}, \widehat{B_0})\|_{L^\infty}+\|\nabla^k(\vr_0, m_0, B_0)\|_{L^2}).
\end{equation}
If the Fourier transform
$\mathcal{F}(\vr_0, m_0, B_0)=(\widehat{\vr_0}, \widehat{m_0},\widehat{B_0})$
satisfies
$$
|\widehat{\vr_0}|\ge c_0, ~|\widehat{B_0}|\ge c_0,~ \widehat{m_0}=0, ~0 \le |\xi| \ll 1,
$$
with $c_0>0$ a constant, then we have for large time $t$
\begin{equation}\label{lower-decay}
\min \{\|\n^k \vr_l(t)\|_{L^2}, \|\n^k m_l(t)\|_{L^2}, \|\n^k B_l(t)\|_{L^2\}}
\ge c_1 (1+t)^{-(\frac{3}{4}+\frac{k}{2})},
\end{equation}
where $c_1$ and $C$ are positive constants independent of time $t$.
\end{prop}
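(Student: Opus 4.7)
The plan is to diagonalize \eqref{eq-linearized} on the Fourier side and reduce everything to explicit kernel estimates; write $U=(\vr,m,B)$ for brevity. The magnetic equation is a pure heat equation, giving $\widehat{B_l}(\xi,t)=e^{-|\xi|^2 t}\widehat{B_0}(\xi)$. For the density-momentum block, split $\widehat{m_l}$ into its components parallel and orthogonal to $\xi$; the orthogonal piece solves $\partial_t \widehat{m_l^\perp}+\mu|\xi|^2 \widehat{m_l^\perp}=0$, while the scalar $d(\xi,t):=\xi\cdot\widehat{m_l}/|\xi|$ couples with $\widehat{\vr_l}$ into the constant-coefficient linear system
$$
\partial_t\begin{pmatrix}\widehat{\vr_l}\\ d\end{pmatrix}+\begin{pmatrix}0 & i|\xi|\\ i|\xi| & (2\mu+\nu)|\xi|^2\end{pmatrix}\begin{pmatrix}\widehat{\vr_l}\\ d\end{pmatrix}=0.
$$
Its characteristic polynomial $\lambda^2+(2\mu+\nu)|\xi|^2\lambda+|\xi|^2=0$ has, for small $|\xi|$, complex-conjugate eigenvalues
$$
\lambda_\pm(\xi)=-\frac{(2\mu+\nu)|\xi|^2}{2}\pm i|\xi|\sqrt{1-\frac{(2\mu+\nu)^2|\xi|^2}{4}},
$$
so the low-frequency propagator is parabolic damping $e^{-c|\xi|^2 t}$ modulated by acoustic oscillations of frequency $\omega(\xi)\approx|\xi|$; at high frequency both eigenvalues are negative real numbers bounded away from zero, and the propagator decays exponentially in $t$ alone.

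For the upper bound \eqref{up-decay}, apply Plancherel and split the integral at $|\xi|=1$. On the low-frequency ball the pointwise bound $|\widehat{U_l}(\xi,t)|\lesssim e^{-c|\xi|^2 t}|\widehat{U_0}(\xi)|$ combined with the hypothesis $|\widehat{U_0}|\le C|\xi|^\eta$ gives
$$
\int_{|\xi|\le 1}|\xi|^{2k}|\widehat{U_l}|^2\,d\xi\lesssim \|\widehat{U_0}\|_{L^\infty}^2\int_{\mathbb{R}^3}|\xi|^{2k+2\eta}e^{-c|\xi|^2 t}\,d\xi\lesssim \|\widehat{U_0}\|_{L^\infty}^2(1+t)^{-(3/2+\eta+k)}.
$$
On $|\xi|>1$ the exponential-in-$t$ decay together with Plancherel produces $\lesssim e^{-ct}\|\nabla^k U_0\|_{L^2}^2$, which is subsumed in the polynomial rate; summing delivers \eqref{up-decay} after taking square roots.

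For the lower bound \eqref{lower-decay}, integrate the $2\times 2$ ODE explicitly using $\widehat{m_0}=0$: for $|\xi|\ll 1$,
$$
\widehat{\vr_l}(\xi,t)=e^{-(2\mu+\nu)|\xi|^2 t/2}\bigl[\cos(\omega(\xi)t)+a(\xi)\sin(\omega(\xi)t)\bigr]\widehat{\vr_0}(\xi),\qquad \widehat{B_l}(\xi,t)=e^{-|\xi|^2 t}\widehat{B_0}(\xi),
$$
with $a(\xi)=O(|\xi|)$, together with a companion formula for $d$ that carries an extra factor $|\xi|\sin(\omega t)/\omega$ from the coupling. Restrict the $L^2$ integration to a thin annular shell $\{r_1/\sqrt{t}\le|\xi|\le r_2/\sqrt{t}\}$, with $r_1,r_2$ chosen once and for all so that the acoustic phase $\omega(\xi)t$ stays in a fixed interval on which the trigonometric envelope is uniformly bounded below away from its zeros; on this shell the parabolic prefactor is $\gtrsim 1$ and the hypotheses force $|\widehat{\vr_0}|,|\widehat{B_0}|\gtrsim c_0$, so integrating $|\xi|^{2k}$ over a shell of volume $\sim t^{-3/2}$ yields $\|\nabla^k(\vr_l,B_l)(t)\|_{L^2}^2\gtrsim c_0^2 t^{-(3/2+k)}$, and the corresponding bound for $m_l$ follows because the extra $|\xi|$ factor in $d$ is compensated by the shell scaling. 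The main obstacle is exactly this oscillatory trigonometric factor: since it cannot be bounded below pointwise in $\xi$, the shell must be tuned so that the phase stays controlled over a set of Lebesgue measure of the correct order $t^{-3/2}$; once this tuning is secured, the remaining integration is elementary.
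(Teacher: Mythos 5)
The paper does not actually prove this proposition: it quotes it from \cite{Hu-Wu} and \cite{Li-Zhang} and only remarks that the lower bound for $k\ge 1$ follows the method of \cite{Li-Zhang}. Your route --- heat equation for $B_l$, splitting $\widehat{m_l}$ into the part parallel and orthogonal to $\xi$, explicit diagonalization of the resulting $2\times 2$ system with eigenvalues $\lambda_{\pm}=-\tfrac{(2\mu+\nu)|\xi|^2}{2}\pm i\omega(\xi)$, then Plancherel with a low/high frequency split --- is exactly the standard argument of those references, and your upper-bound half is correct, modulo two routine remarks: at the double root $|\xi|=2/(2\mu+\nu)$ the propagator acquires a harmless factor of $t$ from the Jordan block, and the hypothesis $|\widehat{U_0}|\le C|\xi|^{\eta}$ is only assumed on a small ball $|\xi|\le r_0$, so the split should be made at $r_0$ and the annulus $r_0\le|\xi|\le 1$ folded into the exponentially decaying high-frequency piece.

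The lower bound, however, has a genuine gap as written. On the shell $\{r_1/\sqrt{t}\le|\xi|\le r_2/\sqrt{t}\}$ with $r_1,r_2$ fixed, the acoustic phase $\omega(\xi)t\approx|\xi|t$ sweeps the interval $[r_1\sqrt{t},\,r_2\sqrt{t}]$, whose length grows like $\sqrt{t}$; it does \emph{not} stay in a fixed interval, and $\cos(\omega(\xi)t)$ vanishes on roughly $\sqrt{t}$ spheres inside the shell, so the trigonometric envelope cannot be bounded below pointwise there no matter how $r_1,r_2$ are chosen. Two standard repairs: (i) average rather than bound pointwise --- write $\cos^2(\omega t)=\tfrac12+\tfrac12\cos(2\omega t)$, observe that $\int_{|\xi|\le r_0}|\xi|^{2k}e^{-(2\mu+\nu)|\xi|^2t}\cos(2\omega(\xi)t)\,d\xi=o(t^{-3/2-k})$ after rescaling $\xi=\zeta/\sqrt{t}$ and applying Riemann--Lebesgue to the $\sqrt{t}$-frequency oscillation, and conclude that the full integral is $\ge c\,t^{-3/2-k}$ for large $t$ (this is the argument of \cite{Li-Zhang}); or (ii) replace the single annulus by the union of the $O(\sqrt{t})$ thin, $t$-dependent sub-shells on which $\omega(\xi)t$ lies within $\pi/4$ of a multiple of $2\pi$ --- their total measure is still of order $t^{-3/2}$ and $|\xi|^{2k}\gtrsim t^{-k}$ on the outer half of them. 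Note also that with $\widehat{m_0}=0$ the envelope for $d=\xi\cdot\widehat{m_l}/|\xi|$ is $\sin(\omega t)$ rather than $\cos(\omega t)$, so whichever repair you choose must be run separately for $\vr_l$ and for $m_l$; the heat-kernel lower bound for $B_l$ needs none of this. With either fix the remainder of your computation goes through and yields \eqref{lower-decay}.
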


Indeed, the lower bound of decay rate \eqref{lower-decay} is only established
for the case $k=0$, whereas the general case $k(\ge 1)$ can be obtained just following
the method in \cite{Li-Zhang}.
In order to obtain the lower bound for the solution of the compressible Hall-MHD equation \eqref{eq-per},
we need to address the upper decay rate for the difference between the nonlinear and linearized part.
Hence, let us denote
$$
\vr_\d :=\vr-\vr_l, m_\d :=m-m_l, B_\d :=B-B_l,
$$
then they satisfy the following system
\begin{equation}\label{eq-differ}
\left\{
\begin{aligned}
&\p_t \vr_\d+{\rm div} m_\d=0,\\
&\p_t m_\d-\mu \Delta m_\d-(\mu+\nu)\nabla {\rm div} m_\d+\nabla \varrho_\d=-{\rm div} S_1,\\
&\p_t B_\d-\Delta B_\d=\nt S_2, \quad {\rm div}B_\d=0,
\end{aligned}
\right.
\end{equation}
with the zero initial data
\begin{equation}\label{initial3}
\left.(\vr_\d, m_\d, B_\d)(x,t)\right|_{t=0}=(0, 0, 0).
\end{equation}
Now we will establish the decay rate for the solution $(\vr_\d, m_\d, B_\d)$
of equation \eqref{eq-differ} in the following.

\begin{lemm}\label{energy1}
For any smooth solution $(\vr_\d, m_\d)$ of the equation \eqref{eq-differ}, it holds on
\begin{equation}\label{claim1}
\frac{d}{dt}\|\n^k (m_\d, \vr_\d)\|_{H^{2-k}}^2+C\|\n^{k+1} m_\d\|_{H^{2-k}}^2
\le C\|\n^k(\vr, u, B)\|_{H^{2-k}}^2 \|\n^k (\vr, u, B)\|_{H^{3-k}}^2,
\end{equation}
and
\begin{equation}\label{claim2}
\begin{aligned}
&\sum_{l=k}^1\frac{d}{dt}\int \n^l m_\d \cdot \n^{l+1} \vr_\d dx
+\frac{1}{2}\|\n^{k+1} \vr_\d\|_{H^{1-k}}^2\\
&\le C\|\n^{k+1} m_\d\|_{H^{2-k}}^2
    +C\|\n (\vr, u, B)\|_{H^{1}}^2 \|\n^{k+1} (\vr, u, B)\|_{H^{2-k}}^2,
\end{aligned}
\end{equation}
where $k=0,1$.
\end{lemm}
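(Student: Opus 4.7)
The plan is to prove the two claims by applying $\n^l$ to the difference system \eqref{eq-differ} for appropriate values of $l$, testing against carefully chosen multipliers, and then isolating the nonlinear contribution of $S_1$ on the right-hand side. The two inequalities are complementary: \eqref{claim1} is a standard energy identity that produces the momentum dissipation $\|\n^{l+1} m_\d\|_{L^2}^2$, while \eqref{claim2} is a cross-term estimate of Matsumura--Nishida type that recovers the missing density dissipation $\|\n^{l+1}\vr_\d\|_{L^2}^2$ from the hyperbolic first equation.

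For \eqref{claim1}, I would apply $\n^l$ for $l\in\{k,\ldots,2\}$ to the continuity and momentum equations of \eqref{eq-differ}, take the $L^2$ inner products with $\n^l\vr_\d$ and $\n^l m_\d$ respectively, and add. Integration by parts makes the pair of terms coming from $\int \n^l\n\vr_\d\cdot \n^l m_\d\,dx$ and $\int \n^l{\rm div}\,m_\d\cdot \n^l\vr_\d\,dx$ cancel, producing
\[
\tfrac{1}{2}\tfrac{d}{dt}\bigl(\|\n^l\vr_\d\|_{L^2}^2+\|\n^l m_\d\|_{L^2}^2\bigr)+\mu\|\n^{l+1}m_\d\|_{L^2}^2+(\mu+\nu)\|\n^l{\rm div}\,m_\d\|_{L^2}^2=-\int \n^l{\rm div}\,S_1\cdot \n^l m_\d\,dx.
\]
One more integration by parts on the right and Young's inequality absorb $\tfrac{\mu}{2}\|\n^{l+1}m_\d\|_{L^2}^2$ into the dissipation and leave $C\|\n^l S_1\|_{L^2}^2$. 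Since every term in $S_1$ (see \eqref{force1}) is at least quadratic in $(\vr,u,B)$, the Moser product inequality $\|\n^l(fg)\|_{L^2}\lesssim \|f\|_{L^\infty}\|\n^l g\|_{L^2}+\|g\|_{L^\infty}\|\n^l f\|_{L^2}$, the Sobolev embedding $H^2(\mathbb{R}^3)\hookrightarrow L^\infty$, and the uniform smallness of $\|(\vr,u,B)\|_{H^3}$ from \eqref{uniform} give $\sum_{l=k}^{2}\|\n^l S_1\|_{L^2}^2\lesssim \|\n^k(\vr,u,B)\|_{H^{2-k}}^2\,\|\n^k(\vr,u,B)\|_{H^{3-k}}^2$, which is \eqref{claim1} after summing over $l$.

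For \eqref{claim2}, I would test $\n^l$ of the momentum equation against $\n^{l+1}\vr_\d$ for $l\in\{k,\ldots,1\}$ and rewrite the time derivative as $\tfrac{d}{dt}\int \n^l m_\d\cdot \n^{l+1}\vr_\d\,dx-\int \n^l m_\d\cdot \n^{l+1}\p_t\vr_\d\,dx$. Substituting $\p_t\vr_\d=-{\rm div}\,m_\d$ from the continuity equation and integrating by parts controls the second piece by $C\|\n^{l+1}m_\d\|_{L^2}^2$. The pressure pairing $-\int \n^l\n\vr_\d\cdot \n^{l+1}\vr_\d\,dx=-\|\n^{l+1}\vr_\d\|_{L^2}^2$ supplies the desired density dissipation, while the viscous terms, after integration by parts and a small-parameter Cauchy--Schwarz, contribute at most $C\|\n^{l+1}m_\d\|_{H^{2-k}}^2+\tfrac{1}{4}\|\n^{l+1}\vr_\d\|_{L^2}^2$, the second of which is absorbed. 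The forcing $-\int \n^l{\rm div}\,S_1\cdot \n^{l+1}\vr_\d\,dx$ is handled by Young and the same Moser estimate, but now with at least one derivative landing on the ``outer'' small factor, producing $\|\n(\vr,u,B)\|_{H^1}^2\,\|\n^{k+1}(\vr,u,B)\|_{H^{2-k}}^2$ after summation over $l$. The main obstacle throughout is the careful bookkeeping of these product estimates: for each term in $S_1$---such as $\vr\,u\otimes u$, $|B|^2$, $\mu\n(\vr u)$, and $P(1+\vr)-P(1)-\vr$---the derivatives must be split so that one factor is controlled in $L^\infty$ by $H^2$ via \eqref{uniform}, while the complementary factor carries exactly the derivatives demanded by the right-hand sides of \eqref{claim1}--\eqref{claim2}, closing both inequalities quadratically.
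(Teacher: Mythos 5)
Your proposal is correct and follows essentially the same route as the paper: the standard $L^2$ energy identity with cancellation of the $\n\vr_\d$/${\rm div}\,m_\d$ cross terms plus one integration by parts and Young's inequality for \eqref{claim1}, and the Matsumura--Nishida cross-term estimate (testing $\n^l$ of the momentum equation against $\n^{l+1}\vr_\d$ and substituting $\p_t\vr_\d=-{\rm div}\,m_\d$) for \eqref{claim2}, with the same Moser/Sobolev product bounds on $\n^l S_1$ closing both inequalities.
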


It should be pointed out that we only establish the $H^2$ energy estimates
in Lemma \ref{energy1} although the initial data belong to $H^3$.
The reason is the appearance of second order spatial derivatives of density
in the nonlinear term $\nc S_1$.
The above inequalities \eqref{claim1} and \eqref{claim2} in Lemma \ref{energy1}
will be proved later in Section \ref{technical}.
Multiplying inequality \eqref{claim2} by a small constant $\d$,
adding with \eqref{claim1} and using the decay rate \eqref{Decay1},
we get for all $t \ge 0$
\begin{equation}\label{eg01}
\frac{d}{dt}\mathcal{E}_l^2(t)
+\frac{\d}{2}\|\n^{l+1}\vr_\d\|_{H^{1-l}}^2
+\frac{C_1}{2}\|\n^{l+1}m_\d \|_{H^{2-l}}^2
\le C(1+t)^{-(3+2l)},
\end{equation}
where $l=0,1$.
Here the energy $\mathcal{E}_l^2(t)$ is defined by
\begin{equation}\label{e2}
\mathcal{E}_l^2(t):=
\sum_{k=l}^2 \|\n^k (\vr_\d, m_\d)\|_{L^2}^2
+\d \sum_{k=l}^1 \int \n^k m_\d \cdot \n^{k+1}\vr_\d dx.
\end{equation}
Due to the smallness of $\d$, there are two constants $C_*$ and $C^*$(independent of time) such that
\begin{equation}\label{equivalent}
C_* \|\n^l (\vr_\d, m_\d)(t)\|_{H^{2-l}}^2
\le \mathcal{E}_l^2(t)
\le C^* \|\n^l (\vr_\d, m_\d)\|_{H^{2-l}}^2.
\end{equation}

Now we establish the upper bound decay rate of solution $(\vr_\d, m_\d)$ for the
equation \eqref{eq-differ}.

\begin{lemm}\label{upper1}
Under the assumptions in Theorem \ref{THM1}, then the smooth solution $(\vr_\d, m_\d, B_\d)$
of equation \eqref{eq-differ} satisfies
\begin{equation}\label{231}
\|\n^k (\vr_\d, m_\d)(t)\|_{H^{2-k}}\le C(1+t)^{-\frac{5+2k}{4}},k=0,1;
\end{equation}
for all large time $t$.
\end{lemm}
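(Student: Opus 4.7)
The plan is to establish \eqref{231} in two coupled stages. First, I will use Duhamel's principle together with the linearized decay of Proposition \ref{Decay-Linear} to obtain the bare $L^2$ bound $\|(\vr_\d,m_\d)(t)\|_{L^2}\lesssim (1+t)^{-5/4}$; second, I will apply the Fourier splitting method to the energy inequality \eqref{eg01} to upgrade this to the full $H^{2-k}$ bounds for $k=0,1$.

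For the $L^2$ step, since $(\vr_\d, m_\d, B_\d)$ solves the linearized system \eqref{eq-linearized} with zero initial data \eqref{initial3} and source $(0,-\nc S_1,\nt S_2)$, Duhamel's formula gives an integral representation against the linear semigroup $e^{tL}$. Exploiting the divergence structure of the source in the momentum equation, the standard semigroup estimate reads
$$\|e^{(t-s)L}(0,\nc S_1(s))\|_{L^2}\lesssim (1+t-s)^{-5/4}\|S_1(s)\|_{L^1}+e^{-c(t-s)}\|\n S_1(s)\|_{L^2},$$
which is consistent with Proposition \ref{Decay-Linear} applied with $\eta=1$ at low frequency. Since every term of $S_1$ in \eqref{force1} is at least quadratic in $(\vr,u,B)$, the a priori decay \eqref{Decay1} delivers $\|S_1(s)\|_{L^1}\lesssim (1+s)^{-3/2}$ along with a uniform $H^1$ bound on $S_1$. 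Splitting the integral at $s=t/2$ then yields $\|(\vr_\d,m_\d)(t)\|_{L^2}\lesssim (1+t)^{-5/4}$.

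For the full $H^{2-k}$ decay at each level $l\in\{0,1\}$, I insert a Fourier splitting bridge into the energy inequality \eqref{eg01}. The difficulty is that the dissipation $D_l(t)$ appearing in \eqref{eg01} only controls spatial derivatives of order at least $l+1$, while $\mathcal{E}_l^2\approx \|\n^l(\vr_\d,m_\d)\|_{H^{2-l}}^2$ additionally contains the $l$-th order derivative itself. Splitting the Fourier domain at a radius $R(t)$ gives, schematically, $\mathcal{E}_l^2\lesssim R(t)^2\|(\vr_\d,m_\d)\|_{L^2}^2+R(t)^{-2}D_l(t)$. Choosing $R(t)^2=\alpha/(1+t)$ with $\alpha$ sufficiently large, feeding in the Stage-1 $L^2$ bound, and substituting into \eqref{eg01} yields
$$\frac{d}{dt}\mathcal{E}_l^2+\frac{\alpha}{1+t}\mathcal{E}_l^2\le C(1+t)^{-(3+2l)}+C(1+t)^{-(5/2+2l)}.$$
Multiplying by $(1+t)^{\alpha}$, integrating in time, and invoking the equivalence \eqref{equivalent} produces $\|\n^l(\vr_\d,m_\d)\|_{H^{2-l}}^2\lesssim (1+t)^{-(5+2l)/2}$, which is precisely \eqref{231}.

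The main obstacle is the bootstrap linking these two stages: the Fourier splitting at each level $l$ depends on having already secured the sharp $L^2$ rate $(1+t)^{-5/4}$ for $(\vr_\d,m_\d)$, since the low-frequency residual scales like $R(t)^2\|(\vr_\d,m_\d)\|_{L^2}^2$ and must be strictly smaller than the target $\mathcal{E}_l^2$ rate to close the Gronwall argument. A secondary subtlety is tracking the decay of $\|S_1\|_{L^1}$ and the higher Sobolev norms of $S_1,S_2$ using \eqref{Decay1} and Sobolev interpolation without losing any half-power along the way.
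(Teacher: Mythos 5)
Your Stage 1 (Duhamel's formula plus Proposition \ref{Decay-Linear} applied to the divergence-form source to get $\|(\vr_\d,m_\d)(t)\|_{L^2}\lesssim(1+t)^{-5/4}$) is exactly the paper's estimate \eqref{233}, and your Stage 2 at the level $l=1$ coincides with the paper's Fourier-splitting argument. The gap is at $l=0$. There you compensate for the missing zeroth-order dissipation by Fourier splitting, which only produces a damping coefficient of size $\alpha/(1+t)$. But a Gronwall inequality of the form
\begin{equation*}
\frac{d}{dt}F+\frac{\alpha}{1+t}F\le C(1+t)^{-\beta}
\end{equation*}
yields, after multiplication by $(1+t)^{\alpha}$ and integration, $F(t)\lesssim(1+t)^{-\alpha}+(1+t)^{1-\beta}$: no matter how large you take $\alpha$, the decay is capped one power slower than the forcing. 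The forcing in \eqref{eg01} at $l=0$ is $C(1+t)^{-3}$ (coming from $\|(\vr,u,B)\|_{H^2}^2\,\|(\vr,u,B)\|_{H^3}^2$), so your scheme delivers at best $\mathcal{E}_0^2\lesssim(1+t)^{-2}$, i.e.\ $\|(\vr_\d,m_\d)\|_{H^2}\lesssim(1+t)^{-1}$, short of the claimed $(1+t)^{-5/4}$; your asserted conclusion $\mathcal{E}_0^2\lesssim(1+t)^{-5/2}$ does not follow from the differential inequality you wrote. The loss then propagates: the $l=1$ splitting residual $R^2(1+t)^{-2}\|(\vr_\d,m_\d)\|_{H^1}^2$ only decays like $(1+t)^{-4}$, and the cascade under-delivers by a quarter power at every subsequent level.

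The fix, and what the paper actually does for $k=0$, is to avoid Fourier splitting entirely at the lowest level: add $\|(\vr_\d,m_\d)\|_{L^2}^2$ to \emph{both} sides of the $l=0$ inequality. On the left this completes the dissipation to $\|\vr_\d\|_{H^2}^2+\|m_\d\|_{H^3}^2\gtrsim\mathcal{E}_0^2$, so the damping rate is a constant rather than $1/(1+t)$; on the right the added term is precisely your Stage-1 Duhamel bound $(1+t)^{-5/2}$. The exponential-kernel Gronwall then transfers the forcing rate verbatim, $\mathcal{E}_0^2\lesssim\int_0^t e^{-c(t-\tau)}(1+\tau)^{-5/2}\,d\tau\lesssim(1+t)^{-5/2}$, which is \eqref{234}. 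With this corrected $l=0$ step in hand, your $l=1$ Fourier-splitting argument closes as written and reproduces \eqref{231}.
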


\begin{proof}
Taking $l=0$ in \eqref{eg01}, then we have
\begin{equation*}
\frac{d}{dt}\mathcal{E}_0^2(t)
+C(\|\n \vr_\d\|_{H^{1}}^2+\|\n m_\d \|_{H^{2}}^2)\le C(1+t)^{-3}.
\end{equation*}
Obviously, the dissipation term $\|\n \vr_\d\|_{H^{1}}^2+\|\n m_\d \|_{H^{2}}^2$
can not control the energy term $\mathcal{E}_0^2(t)$ in above inequality.
Thus, we add both sides of the above inequality
with term $\|(\vr_\d, m_\d)\|_{L^2}^2$, and hence get
\begin{equation}\label{232}
\frac{d}{dt}\mathcal{E}_0^2(t)
+C(\|\vr_\d\|_{H^{2}}^2+\|m_\d \|_{H^{3}}^2)
\le \|(\vr_\d, m_\d)\|_{L^2}^2+C(1+t)^{-3}.
\end{equation}
By virtue of the Duhamel principle formula and estimate \eqref{up-decay}, we have
\begin{equation}\label{233}
\begin{aligned}
\|(\vr_\d, m_\d)(t)\|_{L^2}
&\le \int_0^t (1+t-\tau)^{-\frac{5}{4}}(\||\xi|^{-1}\mathcal{F}(\nc S_1)\|_{L^\infty}+\|\n S_1\|_{L^2})d\tau\\
&\le C\int_0^t (1+t-\tau)^{-\frac{5}{4}}(\|S_1\|_{L^1}+\|\n S_1 \|_{L^2})d\tau\\
&\le C\int_0^t (1+t-\tau)^{-\frac{5}{4}}(1+\tau)^{-\frac{3}{2}}d\tau\\
&\le C(1+t)^{-\frac{5}{4}},
\end{aligned}
\end{equation}
where we have used the basic fact
$$
\|S_1\|_{L^1}+\|\n S_1 \|_{L^2}
\le C\|(\vr, u, B)(t)\|_{H^2}^2
\le C(1+t)^{-\frac{3}{2}}.
$$
Using \eqref{232}, \eqref{233} and equivalent relation \eqref{equivalent}, one arrives at
\begin{equation*}
\frac{d}{dt}\mathcal{E}_0^2(t)+CC_* \mathcal{E}_0^2(t) \le C(1+t)^{-\frac{5}{2}},
\end{equation*}
which implies directly
\begin{equation*}
\mathcal{E}_0^2(t)\le C\int_0^t e^{-C(t-\tau)}(1+\tau)^{-\frac{5}{2}}d\tau \le C(1+t)^{-\frac{5}{2}}.
\end{equation*}
Hence, the combination of the above estimate and equivalent relation \eqref{equivalent} yields
\begin{equation}\label{234}
\|(\vr_\d, m_\d)(t)\|_{H^{2}} \le C(1+t)^{-\frac{5}{4}}
\end{equation}
for all $t \ge 0$.

Next, taking $l=1$ in inequality \eqref{eg01}, we have
\begin{equation}
\frac{d}{dt}\mathcal{E}_1^2(t)+C(\|\n^2\vr_\d\|_{L^2}^2+\|\n^2 m_\d \|_{H^{1}}^2) \le C(1+t)^{-5}.
\end{equation}
In order to obtain the time decay rate for the first order spatial derivative
of solution by the Fourier Splitting Method(by Schonbek \cite{Schonbek}),
which has been applied to obtain decay rate for the incompressible Navier-Stokes
equations in higher order derivative norm(see \cite{Schonbek-Wiegner}).
The difficulty, arising from the compressible Navier-Stokes equations, is the appearance of density
that obeys the transport equation rather diffusive one.
To get rid of this difficulty, our idea is write the above inequality  as the following form
\begin{equation}\label{235}
\frac{d}{dt}\mathcal{E}_1^2(t)
+\frac{C}{2}\|\n^2\vr_\d\|_{L^2}^2
+\frac{C}{2}\|\n^2\vr_\d\|_{L^2}^2
+\frac{C}{2}\|\n^2 m_\d \|_{H^{1}}^2 \le C(1+t)^{-5}.
\end{equation}
For some constant $R$ defined below, denoting the time sphere(see \cite{Schonbek})
$$
S_0:=\left\{\left. \xi\in \mathbb{R}^3\right| |\xi|\le \left(\frac{R}{1+t}\right)^\frac{1}{2}\right\},
$$
it follows immediately
$$
\begin{aligned}
\int_{\mathbb{R}^3} |\nabla^{2} \vr_\d|^2 dx
&\ge \int_{\mathbb{R}^3/S_0} |\xi|^4 |\widehat{\vr_\d}|^2d\xi\\
&\ge \frac{R}{1+t}\int_{\mathbb{R}^3/S_0} |\xi|^2 |\widehat{\vr_\d}|^2d\xi\\
&\ge \frac{R}{1+t}\int_{\mathbb{R}^3} |\xi|^2 |\widehat{\vr_\d}|^2 d\xi
     -\left(\frac{R}{1+t}\right)^2\int_{S_0} |\widehat{\vr_\d}|^2 d\xi,
\end{aligned}
$$
or equivalently
\begin{equation}\label{Fourier}
\|\nabla^2 \vr_\d\|_{L^2}^2
\ge \frac{R}{1+t}\|\nabla \vr_\d\|_{L^2}^2-\left(\frac{R}{1+t}\right)^2\| \vr_\d\|_{L^2}^2.
\end{equation}
Similarly, we also obtain
\begin{equation*}
\|\n^2 m_\d \|_{H^{1}}^2
\ge \frac{R}{1+t} \|\n m_\d \|_{H^{1}}^2
    -\frac{R^2}{(1+t)^2} \|m_\d \|_{H^{1}}^2.
\end{equation*}
Thus, one arrives at
\begin{equation}\label{236}
\begin{aligned}
&\frac{d}{dt}\mathcal{E}_1^2(t)
+\frac{CR}{2(1+t)}\|\n \vr_\d\|_{L^2}^2
+\frac{C}{2}\|\n^2 \vr_\d\|_{L^2}^2+\frac{CR}{2(1+t)}\|\n m_\d \|_{H^{1}}^2 \\
&\le \frac{C R^2}{(1+t)^2} (\|\vr_\d \|_{L^2}^2+\|m_\d \|_{H^{1}}^2)+C(1+t)^{-5}\\
&\le C R^2(1+t)^{-\frac{9}{2}}+C(1+t)^{-5},
\end{aligned}
\end{equation}
where we have used the decay rate \eqref{234}.
Choose $t \ge T_{1}:=R-1$ such that
$$
\frac{R}{1+t}\le 1,
$$
and hence we have
\begin{equation*}
\frac{d}{dt}\mathcal{E}_1^2(t)
+\frac{CR}{2(1+t)}\|\n (\vr_\d, m_\d)\|_{H^{1}}^2
\le C  R^2(1+t)^{-\frac{9}{2}}+C(1+t)^{-5}.
\end{equation*}
Obliviously, the energy $\mathcal{E}_1^2(t)$ is equivalent to the norm $\|\n (\vr_\d, m_\d)\|_{H^{1}}^2$.
And hence, the advantage of the form \eqref{235} is that the dissipation in \eqref{234}
can control the energy  after using the Fourier Splitting Method.
Using the equivalent relation \eqref{equivalent}, we can get
\begin{equation*}
\frac{d}{dt}\mathcal{E}_1^2(t)+\frac{CR}{2C^{*}(1+t)}\mathcal{E}_1^2(t)\le C R^2(1+t)^{-\frac{9}{2}}+C(1+t)^{-5}.
\end{equation*}
Choosing $R=8C^{*}/C$ and multiplying the above inequality by $(1+t)^4$, one arrives at
\begin{equation*}
\frac{d}{dt}[(1+t)^4 \mathcal{E}_1^2(t)]\le C(1+t)^{-\frac{1}{2}}.
\end{equation*}
Then, the integration over $[T_1, t]$ gives directly
\begin{equation*}
\|\n (\vr_\d, m_\d)(t)\|_{H^{1}}^2 \le C(1+t)^{-\frac{7}{2}},
\end{equation*}
where we have used the equivalent relation \eqref{equivalent}
and uniform estimate \eqref{uniform}.
Therefore we complete the proof of lemma.
\end{proof}

Next, we claim the following energy estimates for the magnetic field,
which will be proved in Section \ref{technical}.

\begin{lemm}
For any smooth solution $(\vr_\d, m_\d, B_\d)$ of the equation \eqref{eq-differ}, it holds on
\begin{equation}\label{claim3}
\frac{d}{dt} \int |\n^k B_\d|^2 dx+\int |\n^{k+1} B_\d|^2 dx
\le C \|(u, B)\|_{W^{1,\infty}}^2\|\n^k (\vr, u, B)\|_{L^2}^2
    +C\|B\|_{L^\infty}^2\|\n^{k+1} B\|_{L^2}^2,
\end{equation}
for $k=1,2,3$.
\end{lemm}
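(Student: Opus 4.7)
The plan is to treat the equation for $B_\d$ in \eqref{eq-differ} as a linear heat equation with source $\nt S_2$ and run a standard $L^2$ energy argument at the level of $\n^k$. Applying $\n^k$, testing against $\n^k B_\d$, and integrating by parts in the Laplacian produces
$$
\frac{1}{2}\frac{d}{dt}\|\n^k B_\d\|_{L^2}^2 + \|\n^{k+1} B_\d\|_{L^2}^2 = \int \nt\n^k S_2 \cdot \n^k B_\d\, dx.
$$
Using the antisymmetry of $\nt$ under the $L^2$ pairing (the standard vector identity $\int (\nt A)\cdot B\, dx=\int A\cdot(\nt B)\, dx$), I would rewrite the right-hand side as $\int \n^k S_2 \cdot \nt\n^k B_\d\, dx$ and then apply Cauchy--Schwarz together with Young's inequality to absorb $\tfrac12\|\n^{k+1} B_\d\|_{L^2}^2$ on the left. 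What remains is to bound $\|\n^k S_2\|_{L^2}^2$ by the right-hand side of \eqref{claim3}.

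For this I would split $S_2 = u\times B - f\,(\nt B)\times B$ with $f := (1+\vr)^{-1}$. The bilinear piece $u\times B$ is handled by the usual Moser product inequality, producing $\|u\|_{L^\infty}^2\|\n^k B\|_{L^2}^2 + \|B\|_{L^\infty}^2\|\n^k u\|_{L^2}^2$, which sits cleanly inside the first term on the right of \eqref{claim3}. The Hall contribution is treated by a trilinear Moser estimate on $f\cdot(\nt B)\cdot B$, organized by which factor receives the top-order derivative: when all $k$ derivatives fall on $\nt B$ one obtains $\|f\|_{L^\infty}^2\|B\|_{L^\infty}^2\|\n^{k+1} B\|_{L^2}^2$, matching the second target term; when they fall on $B$ one obtains $\|f\|_{L^\infty}^2\|\n B\|_{L^\infty}^2\|\n^k B\|_{L^2}^2$; and when they hit $f$ one obtains $\|\nt B\|_{L^\infty}^2\|B\|_{L^\infty}^2\|\n^k f\|_{L^2}^2$, which collapses to a multiple of $\|\n^k \vr\|_{L^2}^2$ by the Fa\`a di Bruno formula together with the smallness of $\vr$ in $L^\infty$. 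Intermediate distributions $i_1+i_2+i_3=k$ with $i_1,i_2,i_3\ge 1$ are routinely absorbed using Gagliardo--Nirenberg interpolation against the uniform $H^3$ bound \eqref{uniform}.

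The main obstacle is the bookkeeping for the Hall term with its variable coefficient $(1+\vr)^{-1}$: every intermediate distribution of derivatives across the three factors must collapse either into $\|(u,B)\|_{W^{1,\infty}}^2\|\n^k(\vr,u,B)\|_{L^2}^2$ or into the absorbable $\|B\|_{L^\infty}^2\|\n^{k+1} B\|_{L^2}^2$, without ever producing a stray $\n^{k+1}\vr$ or $\n^{k+2}B$. The precise structure of the statement is designed exactly so that the only top-derivative magnetic contribution is tied to the small factor $\|B\|_{L^\infty}^2$, which will later allow the $\|\n^{k+1} B\|_{L^2}^2$ piece to be controlled after integration in time against the $H^3$-dissipation in \eqref{uniform}, even for $k=3$ where $\n^4 B$ is only in $L^2_t$ rather than bounded uniformly in time.
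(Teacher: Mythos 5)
Your proposal is correct and follows essentially the same route as the paper: an $L^2$ energy estimate at the level of $\n^k$ in which one derivative is integrated off the curl of the source and absorbed by Young's inequality, reducing everything to the bound $\|\n^k S_2\|_{L^2}\lesssim \|(u,B)\|_{W^{1,\infty}}\|\n^k(\vr,u,B)\|_{L^2}+\|B\|_{L^\infty}\|\n^{k+1}B\|_{L^2}$, obtained by Moser-type product estimates on $u\times B$ and on $(1+\vr)^{-1}(\nt B)\times B$ with the composition $(1+\vr)^{-1}$ handled through its expansion about $\vr=0$. The only cosmetic differences are that the paper peels off the factor $(1+\vr)^{-1}$ with a second bilinear Moser estimate rather than a single trilinear one, and that your remark about the $\nt$ pairing describes formal self-adjointness rather than antisymmetry; neither affects the argument.
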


Now, we will establish the upper decay rate for the difference of magnetic field
between the nonlinear and linearized parts.

\begin{lemm}\label{upper1}
Under the assumptions in Theorem \ref{THM1}, then the smooth solution $(\vr_\d, m_\d, B_\d)$
of equation \eqref{eq-differ} satisfies
\begin{equation}\label{251}
\|\n^k B_\d(t)\|_{L^2}\le C(1+t)^{-\frac{5+2k}{4}}, k=0,1,2,3,
\end{equation}
for all large time $t$.
\end{lemm}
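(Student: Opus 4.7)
The plan is to mirror the strategy used in Lemma \ref{upper1} for $(\vr_\d,m_\d)$, treating the base case $k=0$ by Duhamel's principle and then iterating through $k=1,2,3$ via the Fourier Splitting Method applied to the energy inequality \eqref{claim3}; each previously established decay bound then serves as the low-frequency input at the next derivative level.

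For $k=0$, the equation $\p_t B_\d-\Delta B_\d=\nt S_2$ with zero initial data gives
$$B_\d(t)=\int_0^t e^{(t-\tau)\Delta}\nt S_2(\tau)\,d\tau.$$
Splitting the heat kernel into low and high frequencies exactly as in the derivation of \eqref{233} (so that one derivative is paid by an extra $(1+t-\tau)^{-1/2}$ factor on the low-frequency part), one obtains
$$\|B_\d(t)\|_{L^2}\lesssim \int_0^t (1+t-\tau)^{-5/4}\bigl(\|S_2(\tau)\|_{L^1}+\|\n S_2(\tau)\|_{L^2}\bigr)d\tau.$$
The decay rates in \eqref{Decay1}, together with Sobolev product estimates on $S_2=u\times B-(\nt B)\times B/(1+\vr)$, yield $\|S_2\|_{L^1}+\|\n S_2\|_{L^2}\lesssim (1+\tau)^{-3/2}$, which convolves to the target bound $(1+t)^{-5/4}$.

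For $k=1,2,3$, I would feed Theorem \ref{THM1} plus Gagliardo-Nirenberg inequalities into the right-hand side of \eqref{claim3} to show it decays at least like $(1+t)^{-(7+2k)/2}$. Then applying Schonbek's trick in the spirit of \eqref{Fourier},
$$\|\n^{k+1}B_\d\|_{L^2}^2\ge \frac{R}{1+t}\|\n^k B_\d\|_{L^2}^2-\Bigl(\frac{R}{1+t}\Bigr)^2\|\n^{k-1}B_\d\|_{L^2}^2,$$
and substituting the inductive hypothesis $\|\n^{k-1}B_\d\|_{L^2}\lesssim (1+t)^{-(3+2k)/4}$, one arrives at
$$\frac{d}{dt}\|\n^k B_\d\|_{L^2}^2+\frac{R}{1+t}\|\n^k B_\d\|_{L^2}^2\lesssim (1+t)^{-(7+2k)/2}.$$
Multiplying by $(1+t)^R$ for $R$ sufficiently large, integrating from a time $T_k$ large enough that $R/(1+t)\le 1$, and using \eqref{uniform} to bound the value at $T_k$ then produces $\|\n^k B_\d\|_{L^2}\lesssim (1+t)^{-(5+2k)/4}$.

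The main obstacle lies at the top level $k=3$, since the Hall source $(\nt B)\times B/(1+\vr)$ costs one extra derivative of $B$ and a naive manipulation would demand $\n^4 B\in L^\infty_t L^2_x$, which is not available in the $H^3$-framework. This is precisely why \eqref{claim3} is formulated with $\|\n^{k+1}B\|_{L^2}^2$ multiplied by the smallness factor $\|B\|_{L^\infty}^2$ rather than by a constant: one exploits the decay of $\|B\|_{L^\infty}$ (via Gagliardo-Nirenberg on the known $L^2$ and $H^2$ decay rates) together with the fact that $\n^4 B\in L^2_t L^2_x$ by the dissipative part of \eqref{uniform}, so that the product $\|B\|_{L^\infty}^2\|\n^{k+1}B\|_{L^2}^2$ remains a time-integrable source of the required order. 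Once this bookkeeping is verified for each $k=1,2,3$, the Fourier splitting iteration closes the argument without further difficulty.
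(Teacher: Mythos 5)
Your overall strategy is exactly the paper's: Duhamel plus \eqref{up-decay} for $k=0$, then the Fourier splitting induction on \eqref{claim3} for $k=1,2,3$, and your exponents ($(1+t)^{-(7+2k)/2}$ for the effective source, $(1+t)^{-(3+2k)/4}$ for the inductive input) match what the paper obtains. The one place where your argument as written does not close is the top level $k=3$, and it is precisely the point you flag as "bookkeeping".

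The issue is quantitative. After applying the splitting inequality and multiplying the resulting differential inequality by $(1+t)^{6}$ (you need the weight exponent to exceed $(5+2k)/2=11/2$), the Hall contribution $C\|B\|_{L^\infty}^2\|\n^4 B\|_{L^2}^2\lesssim (1+t)^{-3}\|\n^4 B\|_{L^2}^2$ becomes $(1+t)^{3}\|\n^4 B\|_{L^2}^2$, so what you must control is
\begin{equation*}
\int_{T_*}^t (1+\tau)^{3}\|\n^4 B(\tau)\|_{L^2}^2\,d\tau .
\end{equation*}
Knowing only $\n^4 B\in L^2_tL^2_x$ from the dissipative part of \eqref{uniform}, together with the decay of $\|B\|_{L^\infty}$, gives integrability of $(1+\tau)^{-3}\|\n^4 B\|_{L^2}^2$ but says nothing about the \emph{positively} time-weighted integral above; a merely integrable source, once multiplied by $(1+t)^6$ inside the Gronwall step, yields no decay at all. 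The paper closes this by a separate claim, \eqref{Claim-Decay}: one runs the unweighted energy identity for $\n^3 B$ (not $\n^3 B_\d$), multiplies it by $(1+t)^3$, and uses the already known decay $\|\n^3 B\|_{L^2}^2\lesssim (1+t)^{-9/2}$ from \eqref{Decay1} to make the commutator term $3(1+t)^2\|\n^3 B\|_{L^2}^2\lesssim (1+t)^{-5/2}$ integrable, whence $\int_{T_*}^t(1+\tau)^3\|\n^4 B\|_{L^2}^2\,d\tau\le C$. You should insert this time-weighted energy estimate explicitly; with it, the rest of your induction goes through as described.
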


\begin{proof}
By virtue of the Duhamel principle formula, estimate \eqref{up-decay} and decay rate \eqref{Decay1}, we have
\begin{equation}\label{252}
\begin{aligned}
\|B_\d(t)\|_{L^2}
&\le \int_0^t (1+t-\tau)^{-\frac{5}{4}}(\||\xi|^{-1}\mathcal{F}(\nt S_2)\|_{L^\infty}+\|\n S_2\|_{L^2})d\tau\\
&\le C\int_0^t (1+t-\tau)^{-\frac{5}{4}}(\| S_2\|_{L^1}+\|\n  S_2\|_{L^2})d\tau\\
&\le C\int_0^t (1+t-\tau)^{-\frac{5}{4}}\|(\vr, u, B)\|_{H^3}^2d\tau\\
&\le C\int_0^t (1+t-\tau)^{-\frac{5}{4}}(1+\tau)^{-\frac{3}{2}}d\tau\\
&\le C(1+t)^{-\frac{5}{4}}.
\end{aligned}
\end{equation}

Taking $k=1$ in inequality \eqref{claim3} and using decay rate \eqref{Decay1}, we have
\begin{equation*}
\frac{d}{dt}\|\n B_\d\|_{L^2}^2+\|\n^{2} B_\d\|_{L^2}^2 \le C(1+t)^{-5}.
\end{equation*}
Similar to inequality \eqref{Fourier}, we use decay rate \eqref{252} to get
\begin{equation*}
\frac{d}{dt}\|\n B_\d\|_{L^2}^2+\frac{R}{1+t}\|\n B_\d\|_{L^2}^2
\le \frac{R^2}{(1+t)^2}\|B_\d\|_{L^2}^2+C(1+t)^{-5}
\le CR^2(1+t)^{-\frac{9}{2}}.
\end{equation*}
Choosing $R=4$, multiplying by $(1+t)^4$ and integrating with time, we have
\begin{equation*}
\|\n B_\d(t)\|_{L^2}^2 \le C(1+t)^{-\frac{7}{2}}.
\end{equation*}
Similarly, we can obtain
\begin{equation*}\label{253}
\|\n^2 B_\d(t)\|_{L^2}^2 \le C(1+t)^{-\frac{9}{2}}.
\end{equation*}

Finally, we establish the decay rate for $\|\n^3 B_\d (t)\|_{L^2}$.
The difficulty comes from the term $\|\n^4 B(t)\|_{L^2}$ on the righthand side of inequality \eqref{claim3}.
The idea is to used the time weighted integration for the dissipation term $\|\n^4 B(t)\|_{L^2}$.
Similar to inequality \eqref{Fourier}, we can obtain
\begin{equation*}
\|\nabla^4 B_\d\|_{L^2}^2
\ge \frac{R}{1+t}\|\nabla^3 B_\d\|_{L^2}^2
-\left(\frac{R}{1+t}\right)^2\|\n ^2 B_\d\|_{L^2}^2,
\end{equation*}
and hence, one arrives at
\begin{equation*}
\begin{aligned}
&\frac{d}{dt}\|\n^3 B_\d\|_{L^2}^2 +\frac{R}{1+t}\|\n^{3} B_\d\|_{L^2}^2\\
&\le  \frac{R^2}{(1+t)^2}\|\n^{2} B_\d\|_{L^2}^2
      +C \|(u, B)\|_{W^{1,\infty}}^2\|\n^3 (\vr, u, B)\|_{L^2}^2+C\|B\|_{L^\infty}^2\|\n^4 B\|_{L^2}^2\\
&\le \frac{R^2}{(1+t)^2}\|\n^{2} B_\d\|_{L^2}^2
      +C(1+t)^{-\frac{13}{2}}+C(1+t)^{-3}\|\n^4 B\|_{L^2}^2\\
&\le C(1+t)^{-\frac{13}{2}}+C(1+t)^{-3}\|\n^4 B\|_{L^2}^2.
\end{aligned}
\end{equation*}
Choose $R=6$ and multiply the above inequality by $(1+t)^6$, it holds on
\begin{equation*}
\frac{d}{dt}\{(1+t)^6\|\n^3 B_\d\|_{L^2}^2\}
\le C(1+t)^{-\frac{1}{2}}+(1+t)^3 \|\n^4 B\|_{L^2}^2,
\end{equation*}
which, integrating over $[T_*, t]$, yields directly
\begin{equation*}\label{254}
\begin{aligned}
(1+t)^6\|\n^3 B_\d\|_{L^2}^2
\le (1+T_*)^6\|\n^3 B_\d(T_*)\|_{L^2}^2
     +C((1+t)^{\frac{1}{2}}-(1+T_*)^{\frac{1}{2}})
     +\int_{T_*}^t(1+\tau)^3 \|\n^4 B\|_{L^2}^2 d\tau.
\end{aligned}
\end{equation*}
We claim the estimate(it will be proved in section \ref{technical})
\begin{equation}\label{Claim-Decay}
\int_{T_*}^t (1+\tau)^3\|\n^4 B\|_{L^2}^2 d\tau \le C,
\end{equation}
where $C$ is positive constant independent of time.
Then, we can obtain
\begin{equation*}
\|\n^3 B_\d(t)\|_{L^2}^2\le C(1+t)^{-\frac{11}{2}},
\end{equation*}
where we have used the uniform estimate \eqref{uniform}. Therefore we complete the proof of lemma.
\end{proof}

Finally, we establish the lower bound of time decay rate
for the global solution of compressible Hall-MHD equation \eqref{eq-MHD}.

\begin{lemm}\label{Lower}
Under all the assumptions of Theorem \ref{THM2},  then the global solution $(\rho, u, B)$
of compressible Hall-MHD equation \eqref{eq-MHD} has the following estimates for all $t \ge t_*$
\begin{equation}\label{261}
\begin{aligned}
&\min \{\|\n^k (\rho-1) (t)\|_{L^2}, \|\n^k u(t)\|_{L^2}\}\ge C(1+t)^{-\frac{3+2k}{4}}, k=0,1;\\
&\|\n^k B(t)\|_{L^2}\ge C(1+t)^{-\frac{3+2k}{4}}, k=0,1,2,3,
\end{aligned}
\end{equation}
Here $t_*$ is a positive large time, and $C$ is a constant independent of time.
\end{lemm}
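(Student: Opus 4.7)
The plan is to exploit the decomposition $U = U_l + U_\d$ built in Lemmas \ref{upper1} (for $(\vr_\d,m_\d)$) and for $B_\d$, combined with the lower bound for the linearized flow in Proposition \ref{Decay-Linear}. By the reverse triangle inequality, for any integer $k$
\begin{equation*}
\|\n^k \vr\|_{L^2} \ge \|\n^k \vr_l\|_{L^2}-\|\n^k \vr_\d\|_{L^2},
\end{equation*}
and the same relation for $m$ and $B$. The assumption \eqref{Lower-frequency-condition} is precisely what is needed to invoke \eqref{lower-decay}, giving $\|\n^k \vr_l\|_{L^2}, \|\n^k B_l\|_{L^2} \ge c_1(1+t)^{-(3/4+k/2)}$ (with the generalization to $k\ge 1$ in the range considered, as noted right after Proposition \ref{Decay-Linear}). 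Meanwhile, Lemmas \ref{upper1} and \ref{upper1} (the second one, for $B_\d$) supply the faster-decaying upper bound $\|\n^k(\vr_\d,m_\d,B_\d)\|_{L^2} \le C(1+t)^{-(5+2k)/4}$. The gap between the exponents $(3/4+k/2)$ and $(5/4+k/2)$ is $1/2$, so for any sufficiently large $t\ge t_*$ the linearized lower bound absorbs the difference and we conclude
\begin{equation*}
\|\n^k \vr\|_{L^2} \ge \tfrac{c_1}{2}(1+t)^{-(3+2k)/4}, \qquad \|\n^k B\|_{L^2} \ge \tfrac{c_1}{2}(1+t)^{-(3+2k)/4},
\end{equation*}
in the claimed ranges of $k$.

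The remaining subtlety is passing from the lower bound for the momentum $m = \rho u$ to one for the velocity $u$. Since the linearized system is formulated in $m$, what Proposition \ref{Decay-Linear} combined with Lemma \ref{upper1} directly yields is $\|\n^k m\|_{L^2} \ge c_1(1+t)^{-(3+2k)/4}$ for $k=0,1$. Writing $m = u + \vr u$, I would estimate $u = m - \vr u$ and then control the correction term $\n^k(\vr u)$ by Leibniz, Sobolev embedding, and the already available upper decay rates from \eqref{Decay1}. Concretely, for $k=0$ the Gagliardo--Nirenberg bound $\|\vr\|_{L^\infty} \lesssim \|\n \vr\|_{L^2}^{1/2}\|\n^2 \vr\|_{L^2}^{1/2} \lesssim (1+t)^{-3/2}$ together with $\|u\|_{L^2} \lesssim (1+t)^{-3/4}$ gives $\|\vr u\|_{L^2} \lesssim (1+t)^{-9/4}$; for $k=1$ one splits into $\n \vr\cdot u + \vr \n u$ and argues analogously, obtaining a bound of order $(1+t)^{-11/4}$. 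In both cases the correction decays strictly faster than the linearized rate $(1+t)^{-(3+2k)/4}$, so
\begin{equation*}
\|\n^k u\|_{L^2} \ge \|\n^k m\|_{L^2}-\|\n^k(\vr u)\|_{L^2} \ge \tfrac{c_1}{2}(1+t)^{-(3+2k)/4}
\end{equation*}
for $t\ge t_*$ large enough.

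The step I expect to require the most care is the $k=1$ velocity lower bound, because in $\n(\vr u) = \n \vr \cdot u + \vr \n u$ the term $\vr \n u$ is only a factor $(1+t)^{-3/2}$ smaller than $\n u$ itself, so one must confirm that this is sufficient to beat the linearized lower bound $(1+t)^{-5/4}$ by at least some positive power (here, by $(1+t)^{-1/2}$). All other estimates are comfortably separated by a factor of $(1+t)^{-1/2}$ or more. Once these quantitative gaps are checked, the lemma follows for all $t\ge t_*$ with $t_*$ chosen large enough that the constants absorb, completing the proof of \eqref{261}.
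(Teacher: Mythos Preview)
Your proposal is correct and follows essentially the same approach as the paper: decompose into linearized plus difference parts, apply the reverse triangle inequality with Proposition~\ref{Decay-Linear} and Lemma~\ref{upper1}, and then pass from the momentum $m=\rho u$ to the velocity $u$ by estimating the correction $\n^k(\vr u)$ via $\|\vr\|_{L^\infty}$ and the upper decay rates in \eqref{Decay1}. The paper's argument is organized the same way and uses the same numerical gaps (e.g.\ $\|\vr u\|_{L^2}\lesssim(1+t)^{-9/4}$ and $\|\n(\vr u)\|_{L^2}\lesssim(1+t)^{-11/4}$).
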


\begin{proof}
Remember the definition
$$
\vr_\d=\vr-\vr_l, m_\d=m-m_l, B_\d=B-B_l,
$$
we have
\begin{equation*}
\|\vr_l\|_{L^2}=\|\vr-\vr_\d\|_{L^2}\le \|\vr\|_{L^2}+\|\vr_\d\|_{L^2},
\end{equation*}
which, together with estimates \eqref{lower-decay} and \eqref{231}, yields directly
\begin{equation*}
\begin{aligned}
\|\vr (t)\|_{L^2}
&\ge \|\vr_l\|_{L^2}-\|\vr_\d\|_{L^2}\\
&\ge C_2(1+t)^{-\frac{3}{4}}-C_3(1+t)^{-\frac{5}{4}}\\
&\ge C_2(1+t)^{-\frac{3}{4}}-\frac{C_3}{(1+t)^{\frac{1}{2}}}(1+t)^{-\frac{3}{4}}.
\end{aligned}
\end{equation*}
Choosing $ t \ge \frac{4C_3^2-C_2^2}{C_2^2}$, it holds on
\begin{equation*}
\|\vr (t)\|_{L^2} \ge C(1+t)^{-\frac{3}{4}}.
\end{equation*}
Similarly, using estimates \eqref{lower-decay}, \eqref{231} and \eqref{251}, we also have
\begin{equation*}
\begin{aligned}
&\|\n \vr(t)\|_{L^2}\ge C(1+t)^{-\frac{5}{4}},\\
&\|\n^{k} m (t)\|_{L^2}\ge C(1+t)^{-\frac{3+2k}{4}},\ k=0,1;\\
&\|\n^{k} B (t)\|_{L^2}\ge C(1+t)^{-\frac{3+2k}{4}},\ k=0,1,2,3.
\end{aligned}
\end{equation*}

Finally, we establish the lower decay rate for the velocity.
Using decay \eqref{Decay1}, we get
\begin{equation*}
\|m\|_{L^2} \le \|u\|_{L^2}+\|\vr\|_{L^\infty}\|u\|_{L^2} \le \|u\|_{L^2}+C(1+t)^{-\frac{9}{4}},
\end{equation*}
which, together with \eqref{lower-decay}, yields for large time $t$
\begin{equation*}
\begin{aligned}
\|u\|_{L^2}
\ge \|m\|_{L^2}-C(1+t)^{-\frac{9}{4}}
\ge C(1+t)^{-\frac{3}{4}}-C(1+t)^{-\frac{9}{4}}
\ge C(1+t)^{-\frac{3}{4}}.
\end{aligned}
\end{equation*}
Similar, we have that
\begin{equation*}
\begin{aligned}
\|\n m\|_{L^2}
&\le \|\n u\|_{L^2}+\|\n (\vr u)\|_{L^2}\\
&\le \|\n u\|_{L^2}+\|\vr\|_{L^\infty}\|\n u\|_{L^2}+\|u\|_{L^\infty}\|\n \vr\|_{L^2}\\
&\le \|\n u\|_{L^2}+C(1+t)^{-\frac{11}{4}},
\end{aligned}
\end{equation*}
and hence, one arrives at for large time $t$
\begin{equation*}
\begin{aligned}
\|\n u\|_{L^2}
\ge \|\n m\|_{L^2}-C(1+t)^{-\frac{9}{4}-\frac{1}{2}}
\ge C(1+t)^{-\frac{5}{4}}-C(1+t)^{-\frac{11}{4}}
\ge C(1+t)^{-\frac{5}{4}}.
\end{aligned}
\end{equation*}
Therefore, we complete the proof of lemma.
\end{proof}

\subsection{Lower Bound of Decay Rate for Time Derivative}

In this subsection, we will establish the lower bound of decay rate for the time derivative
of density, velocity and magnetic field.
For the sake of simplicity, we assume $P'(1)=1$ as mentioned before.
Then, denoting $\varrho :=\rho-1$, we rewrite \eqref{eq-MHD} in the perturbation form as
\begin{equation}\label{eq-per2}
\left\{
\begin{aligned}
&\p_t \varrho+{\rm div} u=G_1,\\
&\p_t u-\mu \Delta u-(\mu+\nu)\nabla {\rm div} u+\nabla \varrho=G_2,\\
&\p_t B-\Delta B=G_3, \quad {\rm div} B=0,
\end{aligned}
\right.
\end{equation}
where the function $G_i(i=1,2,3)$ is defined as
\begin{equation*}
\left\{
\begin{aligned}
&G_1=-\vr {\rm div} u-u\cdot \n \vr,\\
&G_2=-u \cdot \!\nabla u+(\frac{1}{\vr+1}-1)[\mu\Delta u+(\mu+\nu)\n {\rm div} u]
 -\left\{\frac{P'(\vr+1)}{\vr+1}-1\right\}\nabla \varrho+\frac{(\nt B)\times B }{1+\vr},\\
&G_3=\nt (u \times B)-\nt \left\{\frac{(\nt B)\times B }{1+\vr}\right\}.
\end{aligned}
\right.
\end{equation*}
The initial data are given by
\begin{equation}\label{initial4}
\left.(\vr, u, B)(x,t)\right|_{t=0}=(\vr_0, u_0, B_0)(x)
\rightarrow(0,0,0) \quad {\text {as} } \quad |x|\rightarrow \infty.
\end{equation}

Now, we establish the lower bound decay rate for the time derivative of solution in $L^2$ norm.

\begin{lemm}
Under the assumptions in Theorem \ref{THM2}, then the global solution $(\vr, u, B)$
of equation \eqref{eq-per2} has the following estimates
\begin{gather}\label{271}
\min \{\|\p_t \vr(t)\|_{L^2}, \|\p_t u(t)\|_{L^2}, \|{\rm div} u(t)\|_{L^2}\}\ge C(1+t)^{-\frac{5}{4}},\\
\label{272}
\|\p_t B(t)\|_{L^2}\ge C(1+t)^{-\frac{7}{4}},
\end{gather}
for all $t \ge t_*(t_*$ being a positive large time). Here $C$ is a positive constant independent of time.
\end{lemm}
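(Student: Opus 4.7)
The plan is to solve each equation in \eqref{eq-per2} for the corresponding time derivative, then extract the lower bound from the spatial-derivative lower bounds already established in Lemma \ref{Lower}, treating every other contribution as lower-order. Throughout, the nonlinear terms $G_1,G_2,G_3$ are at least quadratic in the perturbation, and Sobolev embedding combined with Theorem \ref{THM1} (in particular $\|u\|_{L^\infty},\|B\|_{L^\infty}\lesssim (1+t)^{-3/2}$ via Gagliardo--Nirenberg) shows that $\|G_i\|_{L^2}$ decays strictly faster than the target rate in each case.

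The bounds for $\|\p_t u\|_{L^2}$ and $\|\p_t B\|_{L^2}$ are the easy half. Rewriting the momentum equation as $\p_t u = \mu\Delta u + (\mu+\nu)\n\,{\rm div}\,u - \n\vr + G_2$ yields
\begin{equation*}
\|\p_t u\|_{L^2} \ge \|\n\vr\|_{L^2} - C\|\n^2 u\|_{L^2} - \|G_2\|_{L^2} \ge c(1+t)^{-5/4} - O\big((1+t)^{-7/4}\big),
\end{equation*}
since Lemma \ref{Lower} supplies $\|\n\vr\|_{L^2}\ge c(1+t)^{-5/4}$, Theorem \ref{THM1} gives $\|\n^2 u\|_{L^2}=O((1+t)^{-7/4})$, and $\|G_2\|_{L^2}=O((1+t)^{-11/4})$. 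For $\|\p_t B\|_{L^2}$, write $\p_t B=\Delta B+G_3$; since Plancherel identifies $\|\Delta B\|_{L^2}=\|\n^2 B\|_{L^2}$, Lemma \ref{Lower} gives $\|\Delta B\|_{L^2}\ge c(1+t)^{-7/4}$, while $\|G_3\|_{L^2}=O((1+t)^{-11/4})$ is negligible.

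The bounds on $\|\p_t\vr\|_{L^2}$ and $\|{\rm div}\,u\|_{L^2}$ are the more delicate half. From the conservative continuity equation $\p_t\vr=-{\rm div}\,m$ one has $\|\p_t\vr\|_{L^2}=\|{\rm div}\,m\|_{L^2}$. Splitting $m=m_l+m_\d$ and using Lemma \ref{upper1} to control $\|{\rm div}\,m_\d\|_{L^2}\le\|\n m_\d\|_{L^2}=O((1+t)^{-7/4})$, it suffices to prove $\|{\rm div}\,m_l\|_{L^2}\ge c(1+t)^{-5/4}$. This is obtained by a low-frequency spectral analysis of the linearized system: under $\f{m_0}=0$ and $|\f{\vr_0}|\ge c_0$ for $|\xi|\ll 1$, diagonalizing the coupled density--longitudinal-momentum block yields $\f m_l^\parallel(\xi,t)\approx -i\sin(|\xi|t)\,e^{-\alpha|\xi|^2 t}\,\f{\vr_0}$ near $\xi=0$, from which
\begin{equation*}
\|{\rm div}\,m_l\|_{L^2}^2 \gtrsim \int_{|\xi|\le r_0}|\xi|^2\sin^2(|\xi|t)\,e^{-2\alpha|\xi|^2 t}\,d\xi \gtrsim (1+t)^{-5/2}
\end{equation*}
follows after the parabolic rescaling $\xi=\eta/\sqrt{t}$, the oscillating factor averaging to a positive constant on the unit $\eta$-ball. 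The bound for $\|{\rm div}\,u\|_{L^2}$ is then recovered from the algebraic identity $(1+\vr)\,{\rm div}\,u={\rm div}\,m - u\cdot\n\vr$: the correction satisfies $\|u\cdot\n\vr\|_{L^2}\lesssim\|u\|_{L^\infty}\|\n\vr\|_{L^2}=O((1+t)^{-11/4})$, so the $(1+t)^{-5/4}$ lower bound on $\|{\rm div}\,m\|_{L^2}$ transfers to $\|{\rm div}\,u\|_{L^2}$. The smallness assumption $\|u_0\|_{L^1}\le\d_1$ enters here to control the low-frequency part of $\f{u_0}$, which is not forced to vanish by $\f{m_0}=0$ (since $m_0=\rho_0 u_0$), so that it does not pollute the linear oscillation argument.

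The main obstacle is precisely the spectral lower bound $\|{\rm div}\,m_l\|_{L^2}\ge c(1+t)^{-5/4}$: one must diagonalize the linearized density--momentum block, isolate the oscillating longitudinal mode, and carry out the oscillation-averaging estimate on the parabolically rescaled low-frequency annulus. The argument is in the same spirit as the one behind Proposition \ref{Decay-Linear}, but applied to $|\xi|\,\f m_l^\parallel$ rather than $\f\vr_l$ itself.
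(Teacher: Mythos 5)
Your treatment of $\|\p_t u\|_{L^2}$ and $\|\p_t B\|_{L^2}$ coincides with the paper's: both solve the equation for the time derivative, invoke the lower bounds on $\|\n \vr\|_{L^2}$ and $\|\n^2 B\|_{L^2}$ from Lemma \ref{Lower}, and discard $G_2, G_3$ as faster-decaying. For $\|\p_t\vr\|_{L^2}$ and $\|{\rm div}\,u\|_{L^2}$, however, you take a genuinely different route. The paper works with the non-conservative form $\p_t\vr=-{\rm div}\,u+G_1$, reduces to a lower bound on $\|{\rm div}\,u\|_{L^2}$ via $\|{\rm div}\,u\|_{L^2}\ge C\|\n u\|_{L^2}-C\|\nt u\|_{L^2}$, and then bounds $\|\nt u\|_{L^2}$ \emph{from above} by Duhamel applied to $\p_t(\nt u)-\mu\Delta(\nt u)=\nt G_2$; since $\nt u_0$ has no low-frequency vanishing, its heat-semigroup contribution decays only at the competing rate $(1+t)^{-5/4}$, and this is exactly why the paper must assume $\|u_0\|_{L^1}\le\d_1$ small. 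You instead use the conservative identity $\p_t\vr=-{\rm div}\,m$, split $m=m_l+m_\d$, control $\|{\rm div}\,m_\d\|_{L^2}\le\|\n m_\d\|_{L^2}=O((1+t)^{-7/4})$ by Lemma \ref{upper1}, and extract $\|{\rm div}\,m_l\|_{L^2}\ge c(1+t)^{-5/4}$ from the oscillating longitudinal mode; the bound then transfers to ${\rm div}\,u$ through $(1+\vr)\,{\rm div}\,u={\rm div}\,m-u\cdot\n\vr$. This is correct and arguably cleaner: because $\widehat{m_0}=0$ at low frequency, the transverse part $\nt m_l$ decays exponentially, so your route does not actually need the smallness of $\|u_0\|_{L^1}$ at all — your closing remark that $\d_1$ ``enters here to control the low-frequency part of $\widehat{u_0}$'' is spurious, since $\widehat{u_0}$ never appears in your argument. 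Two small points to tighten: (i) the spectral formula $\widehat{m}_l^{\parallel}\approx -i\sin(|\xi|t)e^{-\alpha|\xi|^2t}\widehat{\vr_0}$ is the one ingredient not directly quotable from Proposition \ref{Decay-Linear} (which bounds $\|\n^k m_l\|_{L^2}$, not $\|{\rm div}\,m_l\|_{L^2}$), so either carry out the diagonalization with explicit error control as in Li--Zhang, or more simply combine \eqref{lower-decay} for $k=1$ with $\|\nt m_l\|_{L^2}\le Ce^{-ct}$ and the identity $\|\n m_l\|_{L^2}^2=\|{\rm div}\,m_l\|_{L^2}^2+\|\nt m_l\|_{L^2}^2$; (ii) your rate $\|G_2\|_{L^2}=O((1+t)^{-11/4})$ is slightly better than the paper's $(1+t)^{-5/2}$, but either suffices.
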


\begin{proof}
First of all, we establish lower bound time decay rate for $\p_t B$ in $L^2-$norm.
Indeed, using the magnetic field equation in \eqref{eq-per2}, we have
\begin{equation*}
\|\p_t B\|_{L^2}
\ge \|\Delta B\|_{L^2}-\|G_3\|_{L^2}
\ge C(1+t)^{-\frac{7}{4}}-\|G_3\|_{L^2},
\end{equation*}
Using the Sobolev inequality and decay rate \eqref{Decay1}, we have
\begin{equation*}
\|G_3\|_{L^2}
\le C\|\n (u, B)\|_{H^1}\|\n (\vr, u, B)\|_{L^2}
    +C\|\n B\|_{H^1}\|\n^2 B\|_{L^2}
\le C(1+t)^{-\frac{5}{2}}.
\end{equation*}
And hence, it holds on
\begin{equation*}
\|\p_t B\|_{L^2}
\ge C(1+t)^{-\frac{7}{4}}-C(1+t)^{-\frac{5}{2}}
\ge  C(1+t)^{-\frac{7}{4}},
\end{equation*}
for all some large time $t$.

Next, we establish lower bound time decay rate for $\p_t u$ in $L^2-$norm.
Using the momentum equation in \eqref{eq-per2}, we have
\begin{equation*}
\|\n \vr \|_{L^2}\le \|\p_t u\|_{L^2} +\|\n^2 u\|_{L^2} +\|G_2\|_{L^2}.
\end{equation*}
And hence, we get
\begin{equation}\label{273}
\|\p_t u\|_{L^2}
\ge \|\n \vr \|_{L^2}-\|\n^2 u\|_{L^2}-\|G_2\|_{L^2}
\ge C(1+t)^{-\frac{5}{4}}-C(1+t)^{-\frac{7}{4}}-\|G_2\|_{L^2}.
\end{equation}
By virtue of the Sobolev inequality and time decay rate \eqref{Decay1}, we have
\begin{equation*}
\|G_2\|_{L^2} \le C\|\n (\vr, u, B)\|_{H^1}\|\n (u, B)\|_{H^1}\le C(1+t)^{-\frac{5}{2}}.
\end{equation*}
This and the inequality \eqref{273} yield directly
\begin{equation}\label{274}
\|\p_t u\|_{L^2}
\ge C(1+t)^{-\frac{5}{4}}-C(1+t)^{-\frac{7}{4}}-C(1+t)^{-\frac{5}{2}}
\ge C(1+t)^{-\frac{5}{4}},
\end{equation}
for some large time $t$.

Finally, we establish lower bound time decay rate for $\p_t \vr$ in $L^2-$norm.
To achieve this target, we use the transport equation in equation \eqref{eq-per2} to obtain
\begin{equation*}
\|{\rm div} u\|_{L^2}\le \|\p_t \vr\|_{L^2}+\|G_1\|_{L^2}.
\end{equation*}
By virtue of the Sobolev inequality and decay rate \eqref{Decay1}, it is easy to check that
\begin{equation*}
\|G_1\|_{L^2}\le C\|\n (\vr, u)\|_{H^1}^2 \le C(1+t)^{-\frac{5}{2}},
\end{equation*}
and hence, we obtain
\begin{equation}\label{275}
\|\p_t \vr\|_{L^2} \ge \|{\rm div} u\|_{L^2}-C(1+t)^{-\frac{5}{2}}.
\end{equation}
Now, we need to establish the lower bound decay rate for $\|{\rm div} u\|_{L^2}$.
Notice the relation differential relation $\Delta=\n {\rm div}-\nt \nt$, we get
$$
\|\n u\|_{L^2}^2=\|{\rm div} u\|_{L^2}^2+\|\nt u\|_{L^2}^2.
$$
And hence, one arrives at
\begin{equation}\label{276}
\|{\rm div} u\|_{L^2}
\ge C\|\n u\|_{L^2}-C\|\nt u\|_{L^2}
\ge C(1+t)^{-\frac{5}{4}}-C\|\nt u\|_{L^2},
\end{equation}
which implies that we need to establish upper bound decay rate for $\|\nt u\|_{L^2}$.
To this end, we take the $\nt$ operator the the velocity equation in \eqref{eq-per2} to get
$$
\p_t(\nt u)-\mu \Delta (\nt u)=\nt G_2.
$$
Using  Sobolev inequality, uniform bound \eqref{uniform} and decay rate \eqref{Decay1}, we have
\begin{equation*}
\|G_2\|_{L^1}+\|G_2\|_{L^2}
\le C(\|(\vr, u, B)\|_{L^2}+\|\n (u, B)\|_{H^1})\|\n (\vr, u, B)\|_{H^1}
\le C\d_0(1+t)^{-\frac{5}{4}}.
\end{equation*}
By virtue of the Duhamel principle formula and estimate \eqref{up-decay}, we get
\begin{equation*}
\begin{aligned}
\|\nt u\|_{L^2}\le
&C(1+t)^{-\frac{5}{4}}(\|\Lambda^{-1}\mathcal{F}(\nt u_0)\|_{L^\infty}
                          +\|\Lambda^{-1}\mathcal{F}(\nt u_0)\|_{L^2})\\
&+C\int_0^t (1+t-\tau)^{-\frac{5}{4}}
    (\|\Lambda^{-1}\mathcal{F}(\nt G_2)\|_{L^\infty}
                          +\|\Lambda^{-1}\mathcal{F}(\nt G_2)\|_{L^2})d\tau\\
\le &C(1+t)^{-\frac{5}{4}}(\|u_0\|_{L^1}+\|u_0\|_{L^2})
+C\int_0^t (1+t-\tau)^{-\frac{5}{4}}(\|G_2\|_{L^1}+\|G_2\|_{L^2})d\tau\\
\le
& C(\d_0+\d_1)(1+t)^{-\frac{5}{4}}
  +C\d_0\int_0^t(1+t-\tau)^{-\frac{5}{4}}(1+\tau)^{-\frac{5}{4}}d\tau\\
\le
& C(\d_0+\d_1)(1+t)^{-\frac{5}{4}},
\end{aligned}
\end{equation*}
which, together with estimate \eqref{276} and smallness of $\d_i(i=0,1)$, yields directly
\begin{equation*}
\|{\rm div} u\|_{L^2}
\ge C(1+t)^{-\frac{5}{4}}-C(\d_0+\d_1)(1+t)^{-\frac{5}{4}}\ge C(1+t)^{-\frac{5}{4}}.
\end{equation*}
This and the estimate \eqref{275} yield
\begin{equation*}
\|\p_t \vr\|_{L^2} \ge C(1+t)^{-\frac{5}{4}}-C(1+t)^{-\frac{5}{2}},
\end{equation*}
which implies directly
\begin{equation*}
\|\p_t \vr\|_{L^2} \ge C(1+t)^{-\frac{5}{4}}
\end{equation*}
for some large time $t$. Therefore, we complete the proof of this lemma.
\end{proof}

\section{Decay Estimates in Weighted Space}\label{space-time}

In this section, we will establish the decay rate of solution for the compressible MHD equation
\eqref{eq-per2} in weighted Sobolev space.
First of all, decay rates for the density, velocity and magnetic field in weighted norm $L^2_\g$
are established based on the technique lemma developed in  \cite{Kukavica-Torres-07} and \cite{Weng-JFA}.
Furthermore, we also address the decay rate for the higher order spatial derivatives in weighted norm.
To achieve this target, the Fourier splitting method developed by Schonbek \cite{Schonbek}
is used to establish the decay rate for the $(k+1)-$th order derivative if the decay rate for
$k-$th order derivative has been established.
Finally, we also address the lower bound of decay rate for the magnetic field in weighted norm.
This will show that the sharp decay rate of magnetic field converging to zero
in $L^2_\g$ is $t^{-\frac{3}{4}+\frac{\g}{2}}$.

Now, we state the following lemma, which can be found in \cite{Kukavica-Torres-07} and \cite{Weng-JFA}.
\begin{lemm}\label{space-time-lemma}
Let $\alpha_0>1, \alpha_1<1, \alpha_2 <1$, and $\beta_1<1, \beta_2<2$.
Assume that a continuously differential function $F: [1, \infty) \rightarrow [0, \infty)$ satisfies
\begin{equation*}
\begin{aligned}
&\frac{d}{dt}F(t)\le C_0 t^{-\alpha_0}F(t)+C_1 t^{-\alpha_1}F(t)^{\beta_1}
                     +C_2 t^{-\alpha_2}F(t)^{\beta_2}+C_3 t^{\g_2-1}, t\ge 1;\\
&F(1)\le K_0,\\
\end{aligned}
\end{equation*}
where $C_0, C_1, C_2, C_3, K_0>0$ and $\g_i=\frac{1-\alpha_i}{1-\beta_i}>0$ for $i=1,2$.
Assume that $\g_1 \ge \g_2$, then there exists a constant $C^*$ depending on
$\alpha_0, \alpha_1, \beta_1, \alpha_2, \beta_2, K_0, C_i, i=1,2,3,4$, such that
$$F(t)\le C^* t^{\g_1},$$
for all $ t\ge 1$.
\end{lemm}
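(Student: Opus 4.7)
The plan is to prove the bound by a continuity (bootstrap) argument with the ansatz $F(t)\le Mt^{\g_1}$ for a sufficiently large constant $M$. As a preliminary reduction, I absorb the linear-in-$F$ term via the integrating factor $\Phi(t):=\exp(-C_0\int_1^t s^{-\alpha_0}\,ds)$. Since $\alpha_0>1$ the integral is uniformly bounded on $[1,\infty)$, so $\Phi$ is comparable to $1$, and $G:=\Phi F$ satisfies a differential inequality of the same shape but with $C_0=0$ and renormalized positive constants. It therefore suffices to treat the case $C_0=0$.

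The crux is the scaling identity $\g_i(1-\beta_i)=1-\alpha_i$, equivalently $\beta_i\g_i-\alpha_i=\g_i-1$, which makes $t^{\g_i}$ the balanced self-similar profile of the term $C_i t^{-\alpha_i}F^{\beta_i}$. Since $\g_2>0$ together with $\alpha_2<1$ forces $\beta_2<1$, I have $\beta_i<1$ for both $i=1,2$; combined with $\g_1\ge\g_2$, this gives
$$
\beta_i\g_1-\alpha_i-(\g_1-1)=(1-\beta_i)(\g_i-\g_1)\le 0.
$$
Substituting the ansatz $F\le Mt^{\g_1}$ into the differential inequality therefore dominates the $C_1$ and $C_2$ contributions by $C_iM^{\beta_i}t^{\g_1-1}$, and $C_3t^{\g_2-1}\le C_3t^{\g_1-1}$ for $t\ge 1$ handles the source term.

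Setting $T^*:=\sup\{T\ge 1:F(t)\le Mt^{\g_1}\text{ on }[1,T]\}$, which is positive provided $M\ge K_0$, integrating the differential inequality on $[1,t]$ with $t\le T^*$ yields
$$
F(t)\le K_0+\frac{C_1M^{\beta_1}+C_2M^{\beta_2}+C_3}{\g_1}\,t^{\g_1}.
$$
Because $\beta_1,\beta_2<1$, the ratios $M^{\beta_i}/M\to 0$ as $M\to\infty$, so $M$ can be chosen so large that the coefficient on the right is at most $M/2$. This gives $F(t)\le (M/2)t^{\g_1}$ on $[1,T^*]$, and continuity pushes the strict inequality past $T^*$, contradicting maximality unless $T^*=\infty$, at which point $C^*:=M$ does the job. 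The step I expect to be the most delicate is the consistency check that the stated hypotheses actually pin down $\beta_2<1$ (rather than merely $\beta_2<2$); once that is clarified, all three subdominant contributions are swept up uniformly by the same closure.
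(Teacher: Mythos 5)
Your argument is correct. Note first that the paper does not actually prove Lemma \ref{space-time-lemma}: it is quoted from \cite{Kukavica-Torres-07} and \cite{Weng-JFA}, so there is no internal proof to compare against, and your write-up supplies the missing argument. The bootstrap with the barrier $Mt^{\gamma_1}$ is essentially the standard comparison argument behind the cited lemma. The reduction of the $C_0$-term by the integrating factor $\Phi(t)=\exp(-C_0\int_1^t s^{-\alpha_0}\,ds)$ is legitimate because $\alpha_0>1$ makes $\Phi$ uniformly comparable to $1$, and since $0<\Phi\le 1$ and $\beta_i<1$ one has $\Phi F^{\beta_i}=\Phi^{1-\beta_i}G^{\beta_i}\le G^{\beta_i}$, so the renormalized inequality really does have the same shape with $C_0=0$. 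The algebraic identity $\beta_i\gamma_1-\alpha_i-(\gamma_1-1)=(1-\beta_i)(\gamma_i-\gamma_1)\le 0$ is the whole point, and your observation that the stated hypotheses ($\alpha_2<1$ together with $\gamma_2>0$) already force $\beta_2<1$, making the ``$\beta_2<2$'' in the statement vacuously weaker, is correct and disposes of the only delicate consistency issue. Two small points worth making explicit: (i) the monotonicity step $F^{\beta_i}\le (Mt^{\gamma_1})^{\beta_i}$ uses $\beta_i\ge 0$, which the lemma does not literally assume but which holds in the paper's only application \eqref{294}, where $\beta_1=\tfrac{2\gamma-1}{2\gamma}$ and $\beta_2=\tfrac{\gamma-1}{\gamma}$ with $\gamma>\tfrac32$; and (ii) after integrating you should absorb the initial datum via $K_0\le K_0 t^{\gamma_1}$ (valid since $\gamma_1>0$ and $t\ge 1$) before choosing $M$ so large that $K_0+(C_1M^{\beta_1}+C_2M^{\beta_2}+C_3)/\gamma_1\le M/2$; the sublinearity $M^{\beta_i}=o(M)$ then closes the continuity argument exactly as you describe. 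With those details spelled out the proof is complete.
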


Now, we address the space-time decay rate for the density, velocity and magnetic field of compressible
Hall-MHD equation \eqref{eq-per2}. More precisely, we have

\begin{lemm}
Under the assumptions of Theorem \ref{THM4}, then the smooth solution $(\vr, u, B)$ of compressible
Hall-MHD equation \eqref{eq-per2} has the estimates
\begin{gather}\label{291}
\|B(t)\|_{L^2_\g}\le Ct^{-\frac{3}{4}+\frac{\g}{2}},\\
\label{292}
\|\vr(t)\|_{L^2_\g}+\|u(t)\|_{L^2_\g} \le Ct^{-\frac{3}{4}+\g},
\end{gather}
for all $\g \ge 0$. Here $C$ is a positive constant independent of time.
\end{lemm}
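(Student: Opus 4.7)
The plan is to derive, for each of $\|B\|_{L^2_\g}^2$ and the combined quantity $\|\vr\|_{L^2_\g}^2 + \|u\|_{L^2_\g}^2$, an ordinary differential inequality that fits the template of Lemma~\ref{space-time-lemma}, and then read off the weighted decay rates. The unweighted decay $(1+t)^{-3/4}$ for $(\vr,u,B)$ and $(1+t)^{-5/4}$ for one derivative, supplied by Theorem~\ref{THM1}, will be used repeatedly to close the estimates.

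First I would treat the magnetic field, whose equation $B_t - \Delta B = \nabla\times S_2$ is genuinely parabolic. Multiplying by $|x|^{2\g} B$ and integrating by parts (using $\mathrm{div}\,B=0$) gives
\begin{equation*}
\tfrac{1}{2}\tfrac{d}{dt}\|B\|_{L^2_\g}^2 + \|\n B\|_{L^2_\g}^2
= -2\g\!\int |x|^{2\g-2}(x\cdot\n B)\cdot B\,dx + \int |x|^{2\g}(\nabla\times S_2)\cdot B\,dx.
\end{equation*}
The cross term arising from differentiating the weight is absorbed by $\epsilon\|\n B\|_{L^2_\g}^2$ together with a lower-order $\|B\|_{L^2_{\g-1}}^2$ handled by interpolation between $\|B\|_{L^2}$ and $\|B\|_{L^2_\g}$. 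For the forcing term, I would integrate by parts to push the curl onto $B$ and then use the explicit form of $S_2 = u\times B - (\nabla\times B)\times B/(1+\vr)$, together with the a priori smallness $\|(\vr,u,B)\|_{H^3}\lesssim\d_0$ and the decay $\|(\vr,u,B)(t)\|_{H^3}\lesssim(1+t)^{-3/4}$, to bound everything by a sum of terms of the form $t^{-\alpha_i}\|B\|_{L^2_\g}^{2\beta_i}$ plus an inhomogeneous term $Ct^{\g_2-1}$ with $\g_2 = -\tfrac{3}{2}+\g$. Applying Lemma~\ref{space-time-lemma} then yields $\|B\|_{L^2_\g}^2 \lesssim t^{-3/2+\g}$, which is \eqref{291}.

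For the density--velocity pair I would proceed analogously but on the coupled system \eqref{eq-per}, multiplying the continuity equation by $|x|^{2\g}\vr$ and the momentum equation by $|x|^{2\g}u$ (after reformulating in terms of $(\vr,u)$ rather than $(\vr,m)$). The point where symmetry of the pressure/div-$u$ coupling is used ensures that, upon summation, the hyperbolic cross term $\int|x|^{2\g}(\n\vr\cdot u + \vr\,\mathrm{div}\,u)\,dx$ collapses, up to a commutator of size $\g\int|x|^{2\g-1}|u||\vr|$ which is again absorbed by the viscous dissipation $\|\n u\|_{L^2_\g}^2$ together with a lower-order weighted norm. The nonlinear contributions from $S_1$ and the convective terms are estimated by Hölder, Sobolev, and the unweighted decay from Theorem~\ref{THM1}. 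The resulting differential inequality again falls under the scope of Lemma~\ref{space-time-lemma}, but with the forcing exponent degraded to $\g_2 = -\tfrac{3}{2}+2\g$, reflecting the absence of dissipation on $\vr$; hence one obtains $\|\vr\|_{L^2_\g}^2+\|u\|_{L^2_\g}^2 \lesssim t^{-3/2+2\g}$, which gives \eqref{292}.

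The main obstacle is the handling of the weighted terms produced by moving $|x|^{2\g}$ past the differential operators: the cross contributions of the form $\int |x|^{2\g-1}|\n B||B|$ and $\int |x|^{2\g-1}|u||\vr|$ must be split carefully between a small fraction of the weighted dissipation and a lower-order weighted norm $\|\cdot\|_{L^2_{\g-1}}$. For integer $\g$ one could proceed by induction on $\g$, using the previously established estimate at weight $\g-1$ to treat the leftover; for general real $\g\ge 0$ the cleanest route is to interpolate $\|\cdot\|_{L^2_{\g-1}}$ between $\|\cdot\|_{L^2}$ (controlled by Theorem~\ref{THM1}) and $\|\cdot\|_{L^2_\g}$ (which is precisely $F(t)^{1/2}$), producing exactly the sub-linear powers $F^{\beta_i}$ required by the hypotheses of Lemma~\ref{space-time-lemma}. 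The closure of the bootstrap, and in particular the verification that the exponents $\alpha_i,\beta_i$ satisfy $\alpha_0>1$, $\alpha_i,\beta_i<1$ and $\g_1\ge\g_2$, is the technical heart of the argument.
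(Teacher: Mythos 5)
Your proposal follows essentially the same route as the paper: weighted energy estimates in which the hyperbolic cross term $\int \x^{2\g}\vr\,\mathrm{div}\,u\,dx$ cancels upon summing the two equations, the weight commutators are absorbed into the viscous dissipation plus $L^2_{\g-1}$ norms, $\|\cdot\|_{L^2_{\g-1}}$ is interpolated between $L^2$ and $L^2_\g$ to produce the sublinear powers $F^{\beta_i}$, and Lemma \ref{space-time-lemma} is applied (for \eqref{291} the paper simply invokes the argument of \cite{Weng-JFA}, which is what you sketch directly). The one detail you gloss over is that the exponent conditions $\alpha_1,\alpha_2<1$ in Lemma \ref{space-time-lemma} force $\g>\frac{3}{2}$, so the range $\g\in[0,\frac{3}{2}]$ must be recovered at the end by interpolating $\|\vr\|_{L^2_{\g_0}}\le \|\vr\|_{L^2}^{1-\g_0/\g}\|\vr\|_{L^2_\g}^{\g_0/\g}$ against the unweighted decay of Theorem \ref{THM1}, exactly as the paper does.
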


\begin{proof}

The estimate \eqref{291} can be obtained just following the idea as in \cite{Weng-JFA}.
Now, we hope to establish the estimate \eqref{292}.
First of all, multiplying the first equation in \eqref{eq-per2}
by $\x^{2\g} \vr$ and integrating by part, we have
\begin{equation*}
\frac{d}{dt}\frac{1}{2}\int \x^{2\g}\vr^2 dx+\int \x^{2\g} \vr {\rm div} u dx
=\int \x^{2\g}\vr \cdot G_1 dx.
\end{equation*}
Using the H\"{o}lder, Cauchy and Sobolev inequalities, we have
\begin{equation}\label{297}
\begin{aligned}
|\int u \cdot \n \vr \cdot \x^{2\g}\vr dx|
&\le \|\n \vr\|_{L^3}\|\vr\|_{L^2_\g}\|u\|_{L^6_\g}\\
&\le \varepsilon \|\n u\|_{L^2_\g}^2+C\|u\|_{L^2_{\g-1}}^2+C\|\n \vr\|_{H^1}^2\|\vr\|_{L^2_\g}^2,
\end{aligned}
\end{equation}
where we have used the estimate
\begin{equation*}
\|u\|_{L^6_\g}\le C\|\n (\x^\g u)\|_{L^2}\le C (\|\n u\|_{L^2_\g}+\|u\|_{L^2_{\g-1}}).
\end{equation*}
Integrating by part, it holds on
\begin{equation}\label{298}
\begin{aligned}
&|\int \vr {\rm div} u \cdot \x^{2\g}\vr dx|
=|\int u \cdot \n(\x^{2\g}\vr^2) dx|\\
\le
&C\|\n \vr\|_{L^3}\|\vr\|_{L^2_\g}\|u\|_{L^6_\g}
 +C\|\vr\|_{L^\infty}\|\vr\|_{L^2_\g}\|u\|_{L^2_{\g-1}}\\
\le
&\varepsilon \|\n u\|_{L^2_\g}^2+C\|u\|_{L^2_{\g-1}}^2+C\|\n \vr\|_{H^1}^2\|\vr\|_{L^2_\g}^2.
\end{aligned}
\end{equation}
Combining the estimates \eqref{297} and \eqref{298}, we get
\begin{equation*}
|\int \x^{2\g}\vr \cdot G_1 dx|
\le \varepsilon\|\n u\|_{L^2_\g}^2+C\|u\|_{L^2_{\g-1}}^2+C\|\n \vr\|_{H^1}^2\|\vr\|_{L^2_\g}^2,
\end{equation*}
which implies directly
\begin{equation}\label{299}
\frac{d}{dt}\frac{1}{2}\int \x^{2\g}\vr^2 dx+\int \x^{2\g} \vr {\rm div} u dx
\le \varepsilon \|\n u\|_{L^2_\g}^2+C\|u\|_{L^2_{\g-1}}^2+C\|\n \vr\|_{H^1}^2\|\vr\|_{L^2_\g}^2.
\end{equation}

Next, we multiply the second equation in \eqref{eq-per2} by $\x^{2\g}u$ and integrate by part to obtain
\begin{equation*}\label{293}
\begin{aligned}
&\frac{d}{dt}\frac{1}{2}\int \x^{2\g}|u|^2 dx
+\mu \int \x^{2\g}|\n u|^2 dx
+(\mu+\nu)\int \x^{2\g}|{\rm div} u|^2 dx\\
&-\int \x^{2\g} \vr {\rm div}u dx=\int \x^{2\g}G_2 \cdot u dx+I_1+I_2.
\end{aligned}
\end{equation*}
where the functions $I_1$ and $I_2$ are defined by
$$
I_1:=2\g \int \x^{2\g-2}x_j \p_i x_j \vr u_i dx,
$$
and
$$
I_2:=-2\g (\mu+\nu)\int \x^{2\g-2}x_k \p_i x_k u_i \p_j u_j dx
    -2\g \mu \int \x^{2\g-2}x_k \p_j x_k \p_j u_i u_i dx.
$$
Using the H\"{o}lder and Cauchy inequalities, we have
\begin{equation*}
|I_1|
\le \|\vr\|_{L^2_\g}\|u\|_{L^2_{\g-1}}.
\end{equation*}
and
\begin{equation*}
\begin{aligned}
|I_2|
&\le C\mu \|\n u\|_{L^2_\g}\|u\|_{L^2_{\g-1}}
     +C(\mu+\nu) \|{\rm div} u\|_{L^2_\g}\|u\|_{L^2_{\g-1}} \\
&\le \var \mu \|\n u\|_{L^2_\g}^2+\var (\mu+\nu)\|{\rm div} u\|_{L^2_\g}^2
         +C\|u\|_{L^2_{\g-1}}^2.
\end{aligned}
\end{equation*}
Now, we deal with the term $\int \x^{2\g}G_2 \cdot u dx$.
Integrating by part, it holds on
\begin{equation*}
\begin{aligned}
&|\int \frac{\vr}{1+\vr}\Delta u \cdot \x^{2\g}u dx|
=|\int \n u \cdot \n (\x^{2\g}u \frac{\vr}{1+\vr})dx|\\
&\le C\|\vr\|_{L^\infty}\|\n u\|_{L^2_\g}^2
     +C\|\n \vr\|_{L^3}\|\n u\|_{L^2_\g}\|u\|_{L^6_\g}
     +C\|\vr\|_{L^\infty}\|\n u\|_{L^2_\g}\|u\|_{L^2_{\g-1}}\\
&\le \varepsilon \|\n u\|_{L^2_\g}^2
     +C\|u\|_{L^2_{\g-1}}^2.
\end{aligned}
\end{equation*}
Similarly, we also get
\begin{equation*}
|\int \frac{\vr}{1+\vr} \n {\rm div}u \cdot \x^{2\g}u dx|
\le \varepsilon\|{\rm div} u\|_{L^2_\g}^2+C\|u\|_{L^2_{\g-1}}^2.
\end{equation*}
Similar to the estimate \eqref{297}, it holds on
\begin{equation*}
\begin{aligned}
&|\int u \cdot \n u \cdot \x^{2\g}udx|
\le \varepsilon \|\n u\|_{L^2_\g}^2+C\|u\|_{L^2_{\g-1}}^2+C\|\n u\|_{H^1}^2\|u\|_{L^2_\g}^2,\\
&|\int \left\{\frac{P'(\vr+1)}{\vr+1}-1\right\}\n \vr \cdot \x^{2\g}udx|
\le \varepsilon \|\n u\|_{L^2_\g}^2+C\|u\|_{L^2_{\g-1}}^2+C\|\n \vr\|_{H^1}^2\|\vr\|_{L^2_\g}^2,\\
&|\int \frac{(\nt B)\times B }{1+\vr} \cdot \x^{2\g}udx|
\le \varepsilon \|\n u\|_{L^2_\g}^2+C\|u\|_{L^2_{\g-1}}^2+C\|\n B\|_{H^1}^2\| B\|_{L^2_\g}^2.
\end{aligned}
\end{equation*}
Thus, we use the smallness of $\varepsilon$ to get
\begin{equation*}\label{293}
\begin{aligned}
&\frac{d}{dt}\frac{1}{2}\int \x^{2\g}|u|^2 dx
+\frac{3\mu}{4} \int \x^{2\g}|\n u|^2 dx
+\frac{3(\mu+\nu)}{4}\int \x^{2\g}|{\rm div} u|^2 dx\\
\le
& C\|\n (\vr, u)\|_{H^1}^2 \|(\vr, u)\|_{L^2_\g}^2
  +C\|\n B\|_{H^1}^2\|B\|_{L^2_\g}^2
  +C\|\vr\|_{L^2_\g}\|u\|_{L^2_{\g-1}}
  +C\|u\|_{L^2_{\g-1}}^2,
\end{aligned}
\end{equation*}
which, together with \eqref{299}, yields directly
\begin{equation}\label{296}
\begin{aligned}
&\frac{d}{dt}\int \x^{2\g}(|\vr|^2+|u|^2) dx
+\int \x^{2\g}(\mu |\n u|^2+(\mu+\nu)|{\rm div} u|^2) dx\\
\le &C \|\n (\vr, u)\|_{H^1}^2 \|(\vr, u)\|_{L^2_\g}^2
     \!+C\|\n B\|_{H^1}^2\|B\|_{L^2_\g}^2
     \!+C\|\vr\|_{L^2_\g}\|u\|_{L^2_{\g-1}}
     \!+C\|u\|_{L^2_{\g-1}}^2.
\end{aligned}
\end{equation}
Using the inequality
$
\|u\|_{L^2_{\g-1}} \le \|u\|_{L^2_\g}^{\frac{\g-1}{\g}}\|u\|_{L^2}^{\frac{1}{\g}},
$
then we have
\begin{equation*}
\begin{aligned}
&\frac{d}{dt}\int \x^{2\g}(|\vr|^2+|u|^2) dx
+\int \x^{2\g}(\mu |\n u|^2+(\mu+\nu)|{\rm div} u|^2) dx\\
\le &C \|\n (\vr, u, B)\|_{H^1}^2 \|(\vr, u)\|_{L^2_\g}^2
     \!+C\|\n B\|_{H^1}^2\|B\|_{L^2_\g}^2
     \!+\!\|\vr\|_{L^2_\g}\|u\|_{L^2_\g}^{\frac{\g-1}{\g}}\|u\|_{L^2}^{\frac{1}{\g}}
     \!+C\|u\|_{L^2_\g}^{\frac{2(\g-1)}{\g}}\|u\|_{L^2}^{\frac{2}{\g}},
\end{aligned}
\end{equation*}
Denoting $E(t):=\|\vr (t)\|_{L^2_\g}^2+\|u(t)\|_{L^2_\g}^2$,
we can obtain
\begin{equation}\label{294}
\frac{d}{dt}E(t)
\le C_0 t^{-\frac{5}{4}} E(t)
    +C_1 t^{-\frac{3}{4 \g}}E(t)^{\frac{2\g-1}{2\g}}
    +C_2 t^{-\frac{3}{2 \g}}E(t)^{\frac{\g-1}{\g}}
    +C_3 t^{-\frac{11}{4}+\g}.
\end{equation}
Here $\alpha_0=\frac{5}{4}, \alpha_1=\frac{3}{4 \g}, \beta_1=\frac{2\g-1}{2\g},
\alpha_2=\frac{3}{2 \g}, \beta_2=\frac{\g-1}{\g}$.
To assure that $\alpha_1<1, \alpha_2<1$, we require $\g >\frac{3}{2}$.
Hence, $\lambda_1=\frac{1-\alpha_1}{1-\beta_1}=2\g-\frac{3}{2},
 \lambda_2=\frac{1-\alpha_2}{1-\beta_2}=\g-\frac{3}{2}$,
and hence we have $\lambda_1 >\lambda_2$. Thus, using the Lemma \ref{space-time-lemma},
we can deduce from \eqref{294} that
\begin{equation}\label{295}
E(t) \le Ct^{\lambda_1}=Ct^{-\frac{3}{2}+2\g}.
\end{equation}
Using the interpolation inequality and estimate \eqref{295}, we have
$$
\|\vr(t)\|_{L^2_{\g_0}}
\le C \| \vr(t)\|_{L^2}^{1-\frac{\g_0}{\g}}
      \| \vr(t)\|_{L^2_\g}^{\frac{\g_0}{\g}}
\le C t^{-\frac{3}{4}+\g_0},
$$
for all $\g_0 \in [0, \g]$.
The estimate for the velocity  can be obtained similarly,
and hence, we complete the proof of lemma.
\end{proof}

Next, we hope to establish decay rate for the spatial derivative of magnetic field
in weight Sobolev space. It should be pointed out that this target for the incompressible Hall-MHD equation
has been achieved in \cite{{Weng-JFA}} by using the parabolic interpolation inequality developed by
Kukavica and Torres \cite{{Kukavica-01},{Kukavica-Torres-06}}.
In the sequence, we will get that by using the Fourier Splitting method introduced by Schonbek \cite{Schonbek}.

\begin{lemm}
Under all the assumptions of Theorem \ref{THM4}, it holds on
\begin{equation}\label{2101}
\|B(t)\|_{H^{3}_\g}\le Ct^{-\frac{3}{4}+\frac{\g}{2}},
\end{equation}
for all $\g \ge 0$. Here $C$ is a positive constant independent of time.
\end{lemm}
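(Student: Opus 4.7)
The plan is to establish the stronger derivative-by-derivative bound $\|\n^k B(t)\|_{L^2_\g}\lesssim t^{-\frac{3}{4}+\frac{\g}{2}-\frac{k}{2}}$ for $k=0,1,2,3$ and every real $\g\ge 0$, and then sum to recover the $H^3_\g$ estimate (which is limited by the slowest-decaying term $k=0$). The base case $k=0$ is the content of \eqref{291}; the induction on $k$ is the main work, and induction on integer $\g$ plus interpolation handles the weight.

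For fixed $k\in\{1,2,3\}$, I would apply $\n^k$ to the magnetic equation $\p_t B=\Delta B+G_3$, pair with $\x^{2\g}\n^k B$, and integrate. Integration by parts on the Laplacian produces the good dissipation $\int \x^{2\g}|\n^{k+1}B|^2\,dx$ together with commutator terms that, by Cauchy's inequality and the Sobolev-type bound $\|f\|_{L^2_{\g-1}}\le\|f\|_{L^2_\g}^{(\g-1)/\g}\|f\|_{L^2}^{1/\g}$, can be reduced to lower-weight quantities already controlled by the induction hypothesis on $\g$. The nonlinear piece $\int \x^{2\g}\n^k G_3\cdot \n^k B\,dx$ is expanded by Leibniz and estimated by combining the uniform $H^3$ bound \eqref{uniform}, the pointwise-in-time decay \eqref{Decay1}, and the weighted decay of $(\vr,u,B)$ already proved.

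To close the resulting differential inequality I would invoke a weighted Fourier splitting. For integer $\g$, writing $\x^{2\g}|\n^{k+1}B|^2$ as a sum of $|x^\alpha\n^{k+1}B|^2$ with $|\alpha|=\g$, Plancherel gives
\begin{equation*}
\|\n^{k+1}B\|_{L^2_\g}^2\ge \frac{R}{1+t}\|\n^k B\|_{L^2_\g}^2-\left(\frac{R}{1+t}\right)^{\!2}\!\|\n^{k-1}B\|_{L^2_\g}^2,
\end{equation*}
up to controllable commutator corrections, where the time-dependent sphere $S_0=\{|\xi|\le(R/(1+t))^{1/2}\}$ is split as in \cite{Schonbek}. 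Plugging this into the energy identity, choosing $R$ large, and using the induction hypothesis on $\n^{k-1}B$ in $L^2_\g$ reduces matters to an ODE of Schonbek type; multiplying by $(1+t)^{R/2C^*}$ and integrating over $[T_*,t]$ yields the claimed rate. Real $\g$ is then obtained from the integer case by the standard weighted interpolation $\|f\|_{L^2_{\g_0}}\le \|f\|_{L^2}^{1-\g_0/\g}\|f\|_{L^2_\g}^{\g_0/\g}$.

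The main obstacle I anticipate is the top derivative $k=3$: the Hall nonlinearity $\n\times[(\n\times B)\times B/(1+\vr)]$ forces $\n^4 B$ to appear on the right-hand side, which is not dominated by the $\n^{k+1}B=\n^4 B$ dissipation once the weight $|x|^{2\g}$ is present (the $B/(1+\vr)$ factor is not small uniformly in the weight). As in the unweighted analysis leading to \eqref{Claim-Decay}, I expect to need an auxiliary time-weighted integrability bound of the form $\int_{T_*}^t (1+\tau)^{\alpha(\g)}\|\n^4 B(\tau)\|_{L^2_\g}^2\,d\tau\le C$, established by a separate energy estimate at the highest order using \eqref{uniform} and the already proven lower-order weighted rates. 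Once this auxiliary bound is in hand, the Fourier splitting argument closes the $k=3$ case and completes the induction.
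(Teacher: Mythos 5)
The paper proves this lemma without any Fourier splitting. Summing the weighted energy inequalities \eqref{2102} and \eqref{claim4} over $k=1,2,3$ and adding $\|B\|_{L^2_\g}^2$ to both sides makes the dissipation $\|B\|_{H^4_\g}^2$ dominate the energy $\|B\|_{H^3_\g}^2$, so Gronwall gives an exponential kernel and the decay rate is simply inherited from the forcing terms $\|B\|_{H^3_{\g-1}}^2+t^{-\frac{3}{2}+\g}$; an induction on the integer weight $\g$ (base case $\g=1$ from the unweighted rate \eqref{Decay1}) then closes, with non-integer $\g$ obtained by interpolation. Your plan instead proves the sharper derivative-by-derivative rates $t^{-\frac{3}{4}+\frac{\g}{2}-\frac{k}{2}}$ first --- that is the content of the paper's \emph{next} lemma, \eqref{2111} --- and sums. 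The problem is that in the paper that next lemma logically depends on this one: its induction on $k$ has base case $k=0$ equal to \eqref{2101}, and, more importantly, the weighted energy inequality \eqref{claim4} driving the induction contains on its right-hand side the full-weight norms $\|\n B\|_{H^2_\g}^2+\|B\|_{H^2_{\g-1}}^2$ for \emph{every} $k$, i.e.\ derivatives of $B$ up to order $3$ carrying the full weight $\x^{2\g}$, produced by the weighted Sobolev embeddings $\|B\|_{L^\infty_\g}\lesssim \|\n B\|_{H^1_\g}+\cdots$ and $\|\n B\|_{L^6_\g}\lesssim\|\n^2 B\|_{L^2_\g}+\cdots$ applied to the Hall nonlinearity. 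Starting the induction only from \eqref{291} (the $L^2_\g$ bound on $B$ itself), these terms are not controlled by your inductive hypothesis; they are controlled precisely by the crude all-derivatives bound \eqref{2101} you are trying to prove. As written the induction does not close, and the natural repair is to first run the paper's simpler Gronwall argument --- at which point the Fourier splitting is not needed for this lemma at all.

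Your anticipated main obstacle at top order is also misdiagnosed. In the term $\int \x^{2\g}(\n^4 B)\,B\,(\n^4 B)\,dx$ coming from the Hall nonlinearity, the weight $\x^{\g}$ is placed on each of the two top-order factors while $B$ (and $1/(1+\vr)$) is measured in \emph{unweighted} $L^\infty$, which is $O(\d_0)$ by \eqref{uniform}; the absorption into the dissipation $\|\n^4 B\|_{L^2_\g}^2$ therefore works exactly as in the unweighted case (this is the paper's estimate \eqref{3510}). The auxiliary bound $\int (1+\tau)^{\alpha(\g)}\|\n^4 B\|_{L^2_\g}^2\,d\tau\le C$ you propose is unnecessary, and proving it would in any case require the very top-order weighted energy estimate you are trying to avoid. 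A further small point: the weighted Fourier-splitting inequality is not the clean two-term statement you wrote; the correct version (the paper's \eqref{2117}) carries the additional terms $-\g^2\|\n^{l+1}B\|_{L^2_{\g-1}}^2-\frac{R\g^2}{t}\|\n^{l}B\|_{L^2_{\g-1}}^2$, the first of which comes with no gain of $t^{-1}$ and is exactly what forces the secondary induction on $\g$; you allow for this implicitly, but it should be stated rather than hidden in ``controllable commutator corrections.''
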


\begin{proof}
Similar to the estimate \eqref{296}, it is easy to check that
\begin{equation}\label{2102}
\frac{d}{dt}\|B\|_{L^2_\g}^2+\|\n B\|_{L^2_\g}^2
\le C\|B\|_{L^2_{\g-1}}^2+C\|u\|_{L^\infty}^2\|B\|_{L^2_\g}^2.
\end{equation}
Now, we claim the following estimate, which will be proved in section \ref{technical}, holds on
\begin{equation}\label{claim4}
\begin{aligned}
&\frac{d}{dt}\|\n^k B\|_{H^{3-k}_\g}^2+\|\n^{k+1} B\|_{H^{3-k}_\g}^2\\
\le
&   C\|\n^k B\|_{H^{3-k}_{\g-1}}^2
    +C\|u\|_{L^\infty}^2\|\n^k B\|_{H^{3-k}_\g}^2\\
&   +C\|\n^2 (\vr, u, B)\|_{H^1}^2
     (\|\n B\|_{H^2_\g}^2+\|B\|_{H^2_{\g-1}}^2+\|B\|_{L^2_{\g-2}}^2),
\end{aligned}
\end{equation}
where $k=1,2,3$. Using \eqref{2102}, \eqref{claim4}, uniform estimate \eqref{uniform},
and decay rate \eqref{Decay1}, we have
\begin{equation*}
\frac{d}{dt}\| B\|_{H^{3}_\g}^2+\|\n B\|_{H^{3}_\g}^2
\le  C\|B\|_{H^{3}_{\g-1}}^2
     +C \d_0 \|B\|_{H^{3}_\g}^2
     +C t^{-\frac{7}{2}} \|B\|_{L^2_{\g-2}}^2.
\end{equation*}
Adding both sides of the above inequality with $\|B\|_{L^2_\g}^2$,
and using the smallness of $\d_0$, it holds on
\begin{equation*}
\frac{d}{dt}\| B\|_{H^{3}_\g}^2+\|B\|_{H^{4}_\g}^2
\le  C\|B\|_{H^{3}_{\g-1}}^2+\|B\|_{L^2_\g}^2
     +C t^{-\frac{7}{2}} \|B\|_{L^2_{\g-2}}^2
\le C\|B\|_{H^{3}_{\g-1}}^2+Ct^{-\frac{3}{2}+\g},
\end{equation*}
where we have used \eqref{291} in the last inequality.
Then, we can use the Gronwall inequality to get
\begin{equation}\label{2103}
\|B(t)\|_{H^3_\g}^2
\le \|B(T_*)\|_{H^3_\g}^2 e^{-(t-T_*)}
    +C\int_{T_*}^t e^{-(t-\tau)}(\|B(\tau)\|_{H^3_{\g-1}}^2+\tau^{-\frac{3}{2}+\g})d\tau.
\end{equation}
Taking $\g=1$ in \eqref{2103}, we use the decay rate \eqref{Decay1} to obtain
\begin{equation}\label{2104}
\begin{aligned}
\|B(t)\|_{H^3_1}^2
&\le \|B(T_*)\|_{H^3_1}^2 e^{-(t-T_*)}
    +C\int_{T_*}^t e^{-(t-\tau)}(\|B(\tau)\|_{H^3}^2+\tau^{-\frac{3}{2}+1})d\tau\\
&\le \|B(T_*)\|_{H^3_1}^2 e^{-(t-T_*)}
    +C\int_{T_*}^t e^{-(t-\tau)}(\tau^{-\frac{3}{2}}+\tau^{-\frac{1}{2}})d\tau\\
&\le Ct^{-\frac{1}{2}}.
\end{aligned}
\end{equation}
Now, we will take the strategy of induction to give the proof for estimate \eqref{2101}.
In fact, the estimate \eqref{2104} implies that \eqref{2101} holds on for the case $\g=1$.
By the general step of induction, assume that the estimate \eqref{2101} holds on for
the case $\g=\g_1 \ge 1$.  Then, we need to verify that estimate \eqref{2101} holds on for the case $\g=\g_1+1$.
Then, taking $\g=\g_1+1$ in estimate \eqref{2103}, we get
\begin{equation*}
\begin{aligned}
\|B(t)\|_{H^3_{\g_1+1}}^2
&\le \|B(T_*)\|_{H^3_{\g_1+1}}^2 e^{-(t-T_*)}
    +C\int_{T_*}^t e^{-(t-\tau)}(\|B(\tau)\|_{H^3_{\g_1}}^2+\tau^{-\frac{3}{2}+\g_1+1})d\tau\\
&\le \|B(T_*)\|_{H^3_{\g_1+1}}^2 e^{-(t-T_*)}
    +C\int_{T_*}^t e^{-(t-\tau)}(\tau^{-\frac{3}{2}+\g_1}+\tau^{-\frac{3}{2}+\g_1+1})d\tau\\
&\le Ct^{-\frac{1}{2}+\g_1}.
\end{aligned}
\end{equation*}
Hence, we have verified that \eqref{2101} holds on for the case $\g=\g_1+1$.
By the general step of induction, we complete the proof of lemma.
\end{proof}

Now, we will establish the optimal decay rate for higher order spatial derivative
of magnetic field in weighted norm by
using the Fourier Splitting method introduced by Schonbek \cite{Schonbek}.

\begin{lemm}
Under all the assumptions of Theorem \ref{THM4}, it holds on
\begin{equation}\label{2111}
\|\n^k B(t)\|_{H^{3-k}_\g}\le Ct^{-\frac{3}{4}+\frac{\g}{2}-\frac{k}{2}},
\end{equation}
for all $\g \ge 0$, and $k=0,1,2,3$. Here $C$ is a positive constant independent of time.
\end{lemm}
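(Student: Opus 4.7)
The plan is to induct on $k$, with the base case $k=0$ being the preceding lemma $\|B(t)\|_{H^3_\g}\le Ct^{-\frac{3}{4}+\frac{\g}{2}}$ for all $\g\ge 0$. For $k\in\{1,2,3\}$ the additional factor $t^{-k/2}$ will be extracted from the dissipation in the weighted energy inequality \eqref{claim4} through a Fourier-splitting argument adapted to weighted norms, with a secondary induction on the integer part of $\g$ handling the mismatch of weights.

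Starting from \eqref{claim4}, I would first reduce the right-hand side. The term $C\|u\|_{L^\infty}^2\|\n^k B\|_{H^{3-k}_\g}^2$ is absorbed into the left by the smallness $\|u\|_{L^\infty}^2\lesssim (1+t)^{-3/2}$ coming from \eqref{Decay1}; the cubic term involving $\|\n^2(\vr,u,B)\|_{H^1}^2\lesssim (1+t)^{-7/2}$ is bounded using the preceding lemma applied to $\|\n B\|_{H^2_\g}^2$, $\|B\|_{H^2_{\g-1}}^2$ and $\|B\|_{L^2_{\g-2}}^2$, and yields a fast-decaying forcing of the form $Ct^{-5+\g}$; the remaining term $C\|\n^k B\|_{H^{3-k}_{\g-1}}^2$ is treated by the secondary induction on $\g$.

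The key step is a weighted analogue of \eqref{Fourier}. For integer $\g$, I would set $v:=|x|^\g B$ and apply the standard Schonbek splitting in frequency space to $v$ at the scale $|\xi|\sim (R/(1+t))^{1/2}$, then convert back via Leibniz' rule, absorbing the commutator terms $[\n^m,|x|^\g]B$, which lie in $L^2_{\g-j}$ for $1\le j\le m$, into the lower-weighted induction hypothesis. This yields
\begin{equation*}
\|\n^{k+1}B\|_{L^2_\g}^2 \ge \frac{R}{1+t}\|\n^k B\|_{L^2_\g}^2 - \frac{CR^2}{(1+t)^2}\|\n^{k-1}B\|_{L^2_\g}^2 - (\text{lower-weighted corrections}).
\end{equation*}
Summing over the derivative range $\{k,\dots,3\}$ and plugging into the reduced form of \eqref{claim4}, one obtains
\begin{equation*}
\frac{d}{dt}\|\n^k B\|_{H^{3-k}_\g}^2 + \frac{R}{2(1+t)}\|\n^k B\|_{H^{3-k}_\g}^2 \le C(1+t)^{-\alpha_{k,\g}},
\end{equation*}
for a suitable exponent $\alpha_{k,\g}$ determined by the induction hypotheses. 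Choosing $R$ large, multiplying by $(1+t)^{R/2}$, integrating on $[T_*,t]$, and using the uniform bound \eqref{uniform} at $T_*$, I recover $\|\n^k B\|_{H^{3-k}_\g}^2\le Ct^{-\frac{3}{2}+\g-k}$, which is precisely \eqref{2111}. For non-integer $\g$ the estimate follows by interpolating between the two neighbouring integer values.

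The principal obstacle is the Fourier-splitting step in the weighted setting: unlike the unweighted case \eqref{Fourier}, there is no direct Plancherel identity on $L^2_\g$, so the argument must pass through the auxiliary function $v=|x|^\g B$ and treat the commutators $[\n^m,|x|^\g]B$ carefully. These commutators are lower order in the weight and therefore fit into a secondary induction on integer $\g$, but keeping the double induction (on $k$ and on $\g$) consistent is the main bookkeeping difficulty, requiring a careful check at each step that the Gronwall forcing $(1+t)^{-\alpha_{k,\g}}$ decays strictly faster than the target rate $t^{-\frac{3}{2}+\g-k}$, so that $R$ may be chosen large enough to close the estimate.
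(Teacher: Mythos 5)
Your proposal is correct and follows essentially the same route as the paper: a double induction on $k$ and on integer $\g$, a weighted Fourier-splitting inequality whose lower-weight correction terms are absorbed by the induction on $\g$, a Gronwall step after multiplying by a power of $t$, and interpolation for non-integer $\g$. The only difference is cosmetic: where you introduce $v=|x|^\g B$ and track the commutators $[\n^m,|x|^\g]B$ in physical space, the paper performs the identical computation on the Fourier side via $\|\,|x|^\g\n^{l+2}B\|_{L^2}^2=\sum_{|\alpha|=\g}\|\p_\xi^\alpha(\xi^{l+2}\widehat B)\|_{L^2}^2$ and the product rule $\p_\xi^\g(\xi^{l+2}\widehat B)=\xi\,\p_\xi^\g(\xi^{l+1}\widehat B)+\g\,\p_\xi^{\g-1}(\xi^{l+1}\widehat B)$, which produces exactly your lower-weighted corrections.
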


\begin{proof}
The proof of \eqref{2111} is only given for the integer $\g$, and the other case
can be obtained just by using the interpolation inequality.
We will take the strategy of induction to give the proof for estimate \eqref{2111}.
In fact, the inequality \eqref{2101} implies that \eqref{2111} holds on for the case $k=0$.
By the general step of induction, assume that the estimate \eqref{2111} holds on for
$k=\l(l=0,1,2)$, i.e.,
\begin{equation}\label{2112}
\|\n^l B(t)\|_{H^{3-l}_\g}\le Ct^{-\frac{3}{4}+\frac{\g}{2}-\frac{l}{2}}.
\end{equation}
Then, we need to verify
\begin{equation}\label{21110}
\|\n^{l+1} B(t)\|_{H^{2-l}_\g}\le Ct^{-\frac{3}{4}+\frac{\g}{2}-\frac{l+1}{2}}
\end{equation}
holding on. Taking $k=l+1$ in \eqref{claim4} and using decay rate \eqref{Decay1}, it holds on
\begin{equation}\label{2113}
\begin{aligned}
&\frac{d}{dt}\|\n^{l+1} B\|_{H^{2-l}_\g}^2+\|\n^{l+2} B\|_{H^{2-l}_\g}^2\\
\le
&   C\|\n^{l+1} B\|_{H^{2-l}_{\g-1}}^2
    +C t^{-3}\|\n^{l+1} B\|_{H^{2-l}_\g}^2\\
&   +C t^{-\frac{7}{2}}
     (\|\n B\|_{H^2_\g}^2+\|B\|_{H^2_{\g-1}}^2+\|B\|_{L^2_{\g-2}}^2).
\end{aligned}
\end{equation}
For $t>0$, denote the time sphere
$$
S_0:=\left\{\left. \xi\in \mathbb{R}^3\right| |\xi|\le \left(\frac{R}{t}\right)^\frac{1}{2}\right\},
$$
then we apply the Fourier-Plancheral formula to obtain
\begin{equation}\label{2114}
\begin{aligned}
\int_{\mathbb{R}^3} |\x^\g \n^{l+2} B|^2 dx
&=\int_{\mathbb{R}^3} |x|^{2\g} |\n^{l+2} B|^2 dx
=\sum_{|\alpha|=\g}\int_{\mathbb{R}^3} |x^\alpha \n^{l+2} B|^2 dx\\
&=\int_{\mathbb{R}^3}|\p_\xi^\g (\xi^{l+2} \f{B})|^2 d\xi
 \ge \int_{\mathbb{R}^3/{S_0}}|\p_\xi^\g (\xi^{l+2} \f{B})|^2 d\xi.
\end{aligned}
\end{equation}
Due to the fact
$\p_\xi^\g (\xi^{l+2} \f{B})
=\g \p_\xi^{\g-1} (\xi^{l+1} \f{B})+\xi \p_\xi^\g(\xi^{l+1} \f{B})$,
then it holds on
\begin{equation}\label{2115}
\begin{aligned}
\int_{\mathbb{R}^3/{S_0}}\!\!\!|\p_\xi^\g (\xi^{l+2} \f{B})|^2 d\xi
&\ge \int_{\mathbb{R}^3/{S_0}}|\xi \p_\xi^\g(\xi^{l+1} \f{B})
                               +\g \p_\xi^{\g-1} (\xi^{l+1} \f{B})|^2 d\xi\\
&\ge \frac{1}{2}\int_{\mathbb{R}^3/{S_0}}|\xi \p_\xi^\g(\xi^{l+1} \f{B})|^2 d\xi
     -\int_{\mathbb{R}^3/{S_0}}|\g \p_\xi^{\g-1} (\xi^{l+1} \f{B})|^2 d\xi\\
&\ge \frac{R}{2t}\int_{\mathbb{R}^3/{S_0}}|\p_\xi^\g(\xi^{l+1} \f{B})|^2 d\xi
     -\int_{\mathbb{R}^3/{S_0}}\g^2|\p_\xi^{\g-1} (\xi^{l+1} \f{B})|^2 d\xi\\
&\ge \frac{R}{2t}\int_{\mathbb{R}^3}\!|\p_\xi^\g(\xi^{l+1} \f{B})|^2 d\xi
     -\frac{R}{2t}\int_{S_0}\!|\p_\xi^\g(\xi^{l+1} \f{B})|^2 d\xi
     -\g^2\!\! \int_{\mathbb{R}^3/{S_0}}\!\!\!|\p_\xi^{\g-1} (\xi^{l+1} \f{B})|^2 d\xi.
\end{aligned}
\end{equation}
It is easy to check that
$$
|\p_\xi^\g (\xi^{l+1} \f{B})|^2
=|\xi \p_\xi^\g(\xi^l \f{B})+\g \p_\xi^{\g-1} (\xi^l \f{B})|^2
\le 2 |\xi|^2 |\p_\xi^\g (\xi^{l}\f{B})|^2+2 \g^2  |\p_\xi^{\g-1} (\xi^l \f{B})|^2,
$$
 hence, we have
\begin{equation}\label{2116}
\begin{aligned}
\int_{S_0}|\p_\xi^\g(\xi^{l+1} \f{B})|^2 d\xi
&\le 2 \int_{S_0}|\xi|^2 |\p_\xi^\g (\xi^l \f{B})|^2 d\xi
   +2 \g^2  \int_{S_0} |\p_\xi^{\g-1} (\xi^l \f{B})|^2 d\xi\\
&\le \frac{2R}{t} \int_{S_0}|\p_\xi^\g (\xi^l \f{B})|^2 d\xi
   +2 \g^2  \int_{S_0} |\p_\xi^{\g-1} (\xi^l \f{B})|^2 d\xi.
\end{aligned}
\end{equation}
Combining the inequalities \eqref{2114}, \eqref{2115} and \eqref{2116}, one arrives at
\begin{equation}\label{2117}
\|\n^{l+2} B(t)\|_{L^2_\g}^2
\ge \frac{R}{2t}\|\n^{l+1} B(t)\|_{L^2_\g}^2-\frac{R^2}{t^2}\|\n^{l} B(t)\|_{L^2_\g}^2
    -\frac{R\g^2}{t}\|\n^{l} B(t)\|_{L^2_{\g-1}}^2
    -\g^2 \|\n^{l+1} B(t)\|_{L^2_{\g-1}}^2.
\end{equation}
Similarly, it is easy to deduce that
\begin{equation}\label{2118}
\begin{aligned}
\|\n^{l+2} B(t)\|_{H^{2-l}_\g}^2
\ge
&  \frac{R}{2t}\|\n^{l+1} B(t)\|_{H^{2-l}_\g}^2
    -\frac{R^2}{t^2}\|\n^{l} B(t)\|_{H^{2-l}_\g}^2\\
&   -\frac{R\g^2}{t}\|\n^{l} B(t)\|_{H^{2-l}_{\g-1}}^2
    -\g^2 \|\n^{l+1} B(t)\|_{H^{2-l}_{\g-1}}^2,
\end{aligned}
\end{equation}
which, together with \eqref{2113}, yields directly
\begin{equation*}
\begin{aligned}
&\frac{d}{dt}\|\n^{l+1} B(t)\|_{H^{2-l}_\g}^2
  +\frac{R}{2t}\|\n^{l+1} B(t)\|_{H^{2-l}_\g}^2\\
\le
&   \frac{R^2}{t^2}\|\n^{l} B(t)\|_{H^{2-l}_\g}^2
    +\frac{R\g^2}{t}\|\n^{l} B(t)\|_{H^{2-l}_{\g-1}}^2
    +(1+\g^2) \|\n^{l+1} B(t)\|_{H^{2-l}_{\g-1}}^2\\
&   +C t^{-3}\|\n^{l+1} B\|_{H^{2-l}_\g}^2
    +C t^{-\frac{7}{2}}(\|\n B\|_{H^2_\g}^2+\|B\|_{H^2_{\g-1}}^2+\|B\|_{L^2_{\g-2}}^2)\\
\le
& C t^{-\frac{7}{2}+\g-l}+C\|\n^{l+1} B(t)\|_{H^{2-l}_{\g-1}}^2,
\end{aligned}
\end{equation*}
where we have used the assumption decay rate \eqref{2112}.
Then, for some large time $t$, it holds on
\begin{equation}\label{2119}
\frac{d}{dt}\|\n^{l+1} B(t)\|_{H^{2-l}_\g}^2
  +\frac{R}{2t}\|\n^{l+1} B(t)\|_{H^{2-l}_\g}^2
\le C t^{-\frac{7}{2}+\g-l}+C\|\n^{l+1} B(t)\|_{H^{2-l}_{\g-1}}^2.
\end{equation}
If $\g =1$, taking $R=2(l+2)$ in the above inequality, one arrives at
\begin{equation*}
\frac{d}{dt}\|\n^{l+1} B(t)\|_{H^{2-l}_1}^2
  +\frac{l+2}{t}\|\n^{l+1} B(t)\|_{H^{2-l}_1}^2
\le C t^{-\frac{5}{2}-l}+C\|\n^{l+1} B(t)\|_{H^{2-l}}^2
\le C t^{-\frac{5}{2}-l}.
\end{equation*}
which, multiplying by $t^{2+l}$ and integrating with respect with time, we have
\begin{equation}\label{21111}
\|\n^{l+1} B(t)\|_{H^{2-l}_1}^2 \le C t^{-\frac{3}{2}-l}.
\end{equation}
Now, We will take the strategy of induction to give the proof for estimate  \eqref{21110}.
In fact, the estimate \eqref{21111} implies \eqref{21110} holding on for the case $\g=1$.
By the general step of induction, assume that the estimate \eqref{21111} holds on for
the case $\g=\g_1 \ge 1$.  Then, we need to verify that \eqref{21111} holds on for the case $\g=\g_1+1$.
Indeed, taking $R=2(l+2)$ in inequality \eqref{2119}, it holds on
\begin{equation*}
\frac{d}{dt}\|\n^{l+1} B(t)\|_{H^{2-l}_{\g_1+1}}^2
  +\frac{l+2}{t}\|\n^{l+1} B(t)\|_{H^{2-l}_{\g_1+1}}^2
\le C t^{-\frac{7}{2}+\g_1+1-l}+C\|\n^{l+1} B(t)\|_{H^{2-l}_{\g_1}}^2
\le C t^{-\frac{5}{2}+\g_1-l}.
\end{equation*}
Then, multiplying the above inequality by $t^{l+2}$
and integrating with respect to time, we obtain
\begin{equation*}
\|\n^{l+1} B (t)\|_{H^{2-l}_{\g_1+1}}^2 \le C t^{-\frac{3}{2}+\g_1-l}.
\end{equation*}
Hence, we have verified that \eqref{21110} holds on for the case $\g=\g_1+1$.
By the general step of induction, we have verified that estimate \eqref{21110}
holds on. Then, we complete the proof of lemma due to the general step of induction.
\end{proof}

Finally, we concentrate on the space-time decay rate for the higher order spatial
derivative of density and velocity.

\begin{lemm}
Under all the assumptions of Theorem \ref{THM4}, it holds on
\begin{equation}\label{2121}
\|\vr(t)\|_{H^{3}_\g}+\|u(t)\|_{H^{3}_\g}\le Ct^{-\frac{3}{4}+\g},
\end{equation}
for all $\g \ge 0$. Here $C$ is a positive constant independent of time.
\end{lemm}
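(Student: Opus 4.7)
The plan is to prove \eqref{2121} by iterating the same mechanism that was used for the magnetic field lemma above: derive a weighted $H^3$ energy-type inequality for $(\vr,u)$, apply the Fourier Splitting Method on the missing part of the dissipation, and then close the bound by induction on the integer weight index $\g$, with the non-integer case obtained by interpolation. The base case $\g=0$ is exactly \eqref{Decay1}, and the $L^2_\g$ part has already been established in \eqref{292}, so what remains is to upgrade from $L^2_\g$ to $H^3_\g$ for $\g\ge 1$.

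First I would derive, for $k=1,2,3$, a weighted analogue of \eqref{claim4} for the compressible Navier--Stokes part, namely
\begin{equation*}
\frac{d}{dt}\|\n^k(\vr,u)\|_{H^{3-k}_\g}^2+C\|\n^{k+1}u\|_{H^{3-k}_\g}^2
\le C\|\n^k(\vr,u)\|_{H^{3-k}_{\g-1}}^2
+C\d_0 \|\n^k(\vr,u)\|_{H^{3-k}_\g}^2+\text{(l.o.t.)},
\end{equation*}
where the lower-order terms are of the form $t^{-\alpha}(\|B\|_{H^3_\g}^2+\|(\vr,u)\|_{L^2_{\g-1}}^2+\cdots)$ and are controlled by \eqref{2111}, \eqref{292} and the inductive hypothesis on a smaller weight. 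The nonlinear terms are handled exactly as in the weighted $L^2$ calculation \eqref{297}--\eqref{299}: after multiplying the equation by $\x^{2\g}\n^k\vr$ or $\x^{2\g}\n^k u$ and integrating by parts, the weight commutators $[\n,\x^{2\g}]$ produce contributions in $L^2_{\g-1}$ which are absorbed by the induction step, and the nonlinear factors are estimated in $L^\infty$ using \eqref{Decay1}.

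The obvious obstacle is that the density equation is hyperbolic, so the dissipation $\|\n^{k+1} u\|^2_{H^{3-k}_\g}$ gives no direct control of $\n\vr$ in $L^2_\g$. This is the same difficulty that was resolved in the unweighted setting by the cross-term $\sum_l \int \n^l m_\d \cdot \n^{l+1}\vr_\d\,dx$ in Lemma \ref{energy1}. I would therefore add to the weighted energy functional a small multiple of $\sum_{l=k}^{2}\int \x^{2\g}\,\n^l u\cdot \n^{l+1}\vr\,dx$; testing the momentum equation against $\x^{2\g}\n^{l+1}\vr$ and using the continuity equation produces, modulo weight commutators in $L^2_{\g-1}$, the extra dissipation $\tfrac12 \|\n^{k+1}\vr\|_{H^{2-k}_\g}^2$, exactly as in \eqref{claim2}. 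The smallness of $\d_0$ keeps this auxiliary functional equivalent to $\|\n^k(\vr,u)\|_{H^{3-k}_\g}^2$ and absorbs the $C\d_0$ term on the right.

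Once this full weighted dissipation is in hand, I would follow the template of the magnetic-field proof: the dissipation does not control the energy itself, so I apply the Fourier Splitting identity used in \eqref{2114}--\eqref{2118}, with the low-frequency ball $S_0=\{|\xi|\le (R/t)^{1/2}\}$, to obtain
\begin{equation*}
\frac{d}{dt}\|\n^k(\vr,u)\|_{H^{3-k}_\g}^2+\frac{R}{2t}\|\n^k(\vr,u)\|_{H^{3-k}_\g}^2
\le C t^{-\frac{7}{2}+2\g-k}+C\|\n^k(\vr,u)\|_{H^{3-k}_{\g-1}}^2,
\end{equation*}
choose $R$ large (depending on $\g$ and $k$), multiply by a suitable power of $t$, and integrate. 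The base case $\g=1$ follows from \eqref{Decay1} and \eqref{292}; the general step $\g_1\to\g_1+1$ is identical to the induction done in \eqref{2103}--\eqref{2104} and in \eqref{21111}. Non-integer $\g$ then comes from the interpolation inequality $\|f\|_{L^2_{\g_0}}\le \|f\|_{L^2}^{1-\g_0/\g}\|f\|_{L^2_\g}^{\g_0/\g}$ used at the end of the $L^2_\g$ estimate. The hardest point in practice will be organizing the cross-term weighted estimate so that the density loss $\|\n^{k+1}\vr\|$ created by the convective and pressure nonlinearities is genuinely absorbed by $\|\n^{k+1}u\|_{H^{3-k}_\g}^2$ rather than by unweighted terms; this is where the smallness of $\d_0$ from Theorem \ref{THM1} is essential.
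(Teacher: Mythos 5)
Your energy setup is the paper's: the weighted estimate with the small cross term $C\d_0\sum_{l}\int \x^{2\g}\,\n^l u\cdot\n^{l+1}\vr\,dx$ is exactly the paper's functional $\c{E}^{3,k}_\g(t)$ and its claim \eqref{claim5}, and the induction on integer $\g$ with interpolation for real $\g$ is also the paper's route. The gap is in how you close the differential inequality. For this lemma (the full $H^{3}_\g$ norm, with no extra $t^{-k/2}$ gain) the paper does \emph{not} use Fourier splitting: it adds $\|(\vr,u)\|_{L^2_\g}^2$ to both sides, so that the dissipation becomes $\|\vr\|_{H^{3}_\g}^2+\|u\|_{H^{4}_\g}^2\ge C\,\c{E}^{3,0}_\g(t)$, bounds the added copy on the right by the already-proved estimate \eqref{292}, namely $\|(\vr,u)\|_{L^2_\g}^2\le Ct^{-\frac{3}{2}+2\g}$, and concludes with an ordinary exponential Gronwall inequality together with induction on $\g$ as in \eqref{2103}--\eqref{2104}. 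Fourier splitting is reserved for the \emph{next} lemma, where one gains $t^{-k/2}$ on derivatives of order $k\ge 1$.

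Your Fourier-splitting step fails at the zeroth-derivative level. To place $\frac{R}{2t}\|(\vr,u)\|_{H^{3}_\g}^2$ on the left you must in particular show $\|\n u\|_{L^2_\g}^2\ge\frac{R}{2t}\|u\|_{L^2_\g}^2-(\text{low-frequency remainder})$, with remainder $\frac{R}{2t}\int_{S_0}|\p_\xi^\g\widehat{u}|^2\,d\xi$ (and likewise for $\vr$). The identity \eqref{2116} that disposes of this remainder in the magnetic-field proof trades it for the weighted norm of a derivative of one order \emph{lower}; at order zero there is nothing to trade down to, and the only available bounds are $\int_{S_0}|\p_\xi^\g\widehat{u}|^2\,d\xi\le\|u\|_{L^2_\g}^2$, which exactly cancels the gain, or an $L^\infty_\xi$ bound on $\p_\xi^\g\widehat{u}$, i.e.\ a weighted $L^1$ bound on $|x|^\g u$ that is nowhere established. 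This obstruction is precisely why the paper proves the zeroth-order weighted bound \eqref{292} via the ODE Lemma \ref{space-time-lemma} rather than by splitting. A further warning sign: your claimed inequality $\frac{d}{dt}F+\frac{R}{2t}F\le Ct^{-\frac{7}{2}+2\g}+C\|(\vr,u)\|_{H^{3}_{\g-1}}^2$ would, for large $R$, integrate to $\|(\vr,u)\|_{H^{3}_\g}\le Ct^{-\frac{5}{4}+\g}$, strictly stronger than the statement; the rate $t^{-\frac{3}{4}+\g}$ in \eqref{2121} actually comes from the source term $\|(\vr,u)\|_{L^2_\g}^2\sim t^{-\frac{3}{2}+2\g}$ that your scheme omits from the right-hand side. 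Replacing the splitting by the paper's add-and-Gronwall device repairs the argument; the rest of your outline then goes through.
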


\begin{proof}
First of all, we claim the following estimate, which will be proved in section \ref{technical}, holds on
\begin{equation}\label{claim5}
\begin{aligned}
&\frac{d}{dt}\c{E}^{3,k}_\g(t)
  +C(\|\n^{k+1} \vr\|_{H^{2-k}_\g}^2+\|\n^{k+1} u\|_{H^{3-k}_\g}^2) \\
\le
& C\|\n^k (\vr, u)\|_{H^{3-k}_{\g-1}}^2+C\|\n^k \vr\|_{H^{3-k}_\g}\|\n^k u\|_{H^{3-k}_{\g-1}}
  +C\|\n (\vr, u, B)\|_{H^2}^2 \|\n (\vr, u, B)\|_{H^{2}_\g}^2,
\end{aligned}
\end{equation}
for $k=0,1,2$.
Here the energy $\c{E}^{3,k}_\g(t)$ is defined by
\begin{equation}
\c{E}^{3,k}_\g(t):=\|\n^k \vr(t)\|_{H^{3-k}_\g}^2+\|\n^k u(t)\|_{H^{3-k}_\g}^2
                    +C\d_0 \sum_{k\le l \le 2}\int  \x^{2\g} \n^l u \cdot \n^{l+1}\vr dx.
\end{equation}
Due to the smallness of $\d_0$,
there are two constants $C_1$ and $C_2$ such that
\begin{equation}\label{equivalent2}
C_1(\|\n^k \vr(t)\|_{H^{3-k}_\g}^2+\|\n^k u(t)\|_{H^{3-k}_\g}^2)
\le \c{E}^{3,k}_\g(t)
\le C_2(\|\n^k \vr(t)\|_{H^{3-k}_\g}^2+\|\n^k u(t)\|_{H^{3-k}_\g}^2).
\end{equation}
Taking $k=1$ in \eqref{claim5}, adding with \eqref{296},
and adding with $\|(\vr, u)\|_{L^2_\g}^2$ in both handsides, we have
\begin{equation}
\frac{d}{dt}\c{E}^{3,0}_\g(t)
  +C(\|\vr\|_{H^{3}_\g}^2+\|u\|_{H^{4}_\g}^2)
\le C\|(\vr, u)\|_{H^{3}_{\g-1}}^2+C\|(\vr, u)\|_{L^2_\g}^2,
\end{equation}
where we have used the Cauchy inequality and uniform estimate \eqref{uniform}.
Then, using equivalent relation \eqref{equivalent2}, we can obtain
\begin{equation*}
\frac{d}{dt}\c{E}^{3,0}_\g(t)+C \c{E}^{3,0}_\g(t)
\le C\|(\vr, u)\|_{H^{3}_{\g-1}}^2+C\|(\vr, u)\|_{L^2_\g}^2.
\end{equation*}
Thus, similar to the estimate \eqref{2101}, we can apply the induction with respect to
the exponent $\g$ to establish the estimate \eqref{2121}. Therefore, we complete the proof of lemma.
\end{proof}

\begin{lemm}
Under all the assumptions of Theorem \ref{THM4}, it holds on
\begin{equation}\label{2131}
\|\n^k \vr(t)\|_{H^{3-k}_\g}
+\|\n^k u(t)\|_{H^{3-k}_\g} \le Ct^{-\frac{3}{4}+\g-\frac{k}{2}},
\end{equation}
for all $\g \ge 0$, and $k=0,1, 2$. Here $C$ is a positive constant independent of time.
\end{lemm}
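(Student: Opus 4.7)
The plan is to establish \eqref{2131} by a double induction, first on the order of derivatives $k$ and then on the weight exponent $\g$ (with the real weight case handled by interpolation at the end). The base case $k=0$ is precisely \eqref{2121}, so it suffices to show that if the bound \eqref{2131} holds at order $k=l \in \{0,1\}$, then it also holds at order $k=l+1$.

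For the inductive step, I would take $k=l+1$ in the claimed energy inequality \eqref{claim5}, use the uniform bound \eqref{uniform} and the unweighted decay rate \eqref{Decay1} to bound the fully nonlinear quadratic source $\|\n(\vr,u,B)\|_{H^2}^2\|\n(\vr,u,B)\|_{H^2_\g}^2$ by $Ct^{-5/2}\|\n(\vr,u,B)\|_{H^2_\g}^2$, and apply the Cauchy inequality to the cross term $\|\n^{l+1}\vr\|_{H^{2-l}_\g}\|\n^{l+1} u\|_{H^{2-l}_{\g-1}}$. After using the equivalence \eqref{equivalent2}, this should give a differential inequality of the form
\begin{equation*}
\frac{d}{dt}\mathcal{E}^{3,l+1}_\g(t)+C\bigl(\|\n^{l+2}\vr\|_{H^{2-l}_\g}^2+\|\n^{l+2} u\|_{H^{2-l}_\g}^2\bigr) \le C\|\n^{l+1}(\vr,u)\|_{H^{2-l}_{\g-1}}^2+ C t^{-\alpha(\g,l)},
\end{equation*}
with $\alpha(\g,l)$ large enough (of order $7/2-2\g+l$ or better) so that the forcing is negligible compared with the target decay.

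Next, I would insert a Fourier splitting identity completely analogous to \eqref{2117}--\eqref{2118}: on the ball $S_0=\{|\xi|\le (R/t)^{1/2}\}$ one writes $\p_\xi^\g(\xi^{l+2}\widehat f)=\xi\p_\xi^\g(\xi^{l+1}\widehat f)+\g\p_\xi^{\g-1}(\xi^{l+1}\widehat f)$ for $f\in\{\vr,u\}$ componentwise, which produces
\begin{equation*}
\|\n^{l+2}(\vr,u)\|_{H^{2-l}_\g}^2 \ge \frac{R}{2t}\|\n^{l+1}(\vr,u)\|_{H^{2-l}_\g}^2 - \frac{R^2}{t^2}\|\n^{l}(\vr,u)\|_{H^{2-l}_\g}^2 - C\|\n^{l+1}(\vr,u)\|_{H^{2-l}_{\g-1}}^2 - \frac{CR}{t}\|\n^{l}(\vr,u)\|_{H^{2-l}_{\g-1}}^2.
\end{equation*}
Plugging this back and choosing $R$ large enough (say $R=2(l+2)$) yields, after multiplying by $t^{R/2}$ and integrating on $[T_*,t]$, the estimate
\begin{equation*}
\|\n^{l+1}(\vr,u)(t)\|_{H^{2-l}_\g}^2 \le C t^{-(l+2)}\!\int_{T_*}^t\!\! \tau^{l+1}\bigl(\|\n^{l+1}(\vr,u)(\tau)\|_{H^{2-l}_{\g-1}}^2+\text{lower-order corrections}\bigr)d\tau + Ct^{-?},
\end{equation*}
where the lower-order corrections involve $\|\n^l(\vr,u)\|$ in $L^2_\g$ and $L^2_{\g-1}$ and are controlled by the outer induction on $k$.

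To close the induction on $\g$, I would start from $\g=0$ (where \eqref{2131} reduces to \eqref{Decay1}) and argue that if the estimate holds at weight $\g_1$ with the proven decay, then the right-hand side of the above integral inequality is bounded by $C\tau^{-3/2+2\g_1-l}$, which upon integration gives precisely the desired $t^{-3/2+2(\g_1+1)-l-1}=t^{-3/2+2\g_1-l+1}$ rate at weight $\g_1+1$. Real values of $\g$ are then recovered by interpolating between consecutive integer weights, exactly as in the preceding lemma. The main obstacle I anticipate is that, unlike the magnetic field equation, the density satisfies a transport equation, so the dissipation on the left of \eqref{claim5} controls $\vr$ only in $H^{2-k}_\g$ (one order lower than $u$); this is why the cross term $\int \x^{2\g}\n^l u\cdot\n^{l+1}\vr\,dx$ appears in $\mathcal{E}^{3,k}_\g$, and one must verify carefully that the Fourier splitting applied to the velocity dissipation still delivers enough control on $\n^{l+1}\vr$ through this cross term without spoiling the weights — this is the delicate bookkeeping step.
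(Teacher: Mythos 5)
Your proposal follows essentially the same route as the paper: induction on $k$ starting from \eqref{2121}, the energy inequality \eqref{claim5} combined with the Fourier splitting lower bound and an inner induction on integer weights $\g$, with real $\g$ recovered by interpolation. The subtlety you flag at the end --- that the dissipation controls $\vr$ one order lower than $u$ --- is resolved in the paper exactly along the lines you anticipate, by reserving half of the top-order term $\|\n^3\vr\|_{L^2_\g}^2$ and matching it with the Fourier-split contribution for large $t$ (estimate \eqref{2134}).
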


\begin{proof}
The proof of \eqref{2131} is only given for the integer $\g$, and the other case
can be obtained just by using the interpolation.
We will take the strategy of induction to give the proof for estimate \eqref{2131}.
In fact, the inequality \eqref{2121} implies that \eqref{2131} holds on for the case $k=0$.
By the general step of induction, assume that the estimate \eqref{2131} holds on for
the case $k=\l(l=0,1)$, i.e.,
\begin{equation}\label{2132}
\|\n^l \vr(t)\|_{H^{3-l}_\g}+\|\n^l u(t)\|_{H^{3-l}_\g}\le Ct^{-\frac{3}{4}+\g-\frac{l}{2}}.
\end{equation}
Then, we need to verify that \eqref{2131} holds on for the case $k=l+1$, i.e.,
\begin{equation}\label{2138}
\|\n^{l+1} \vr(t)\|_{H^{2-l}_\g}+\|\n^{l+l} u(t)\|_{H^{2-l}_\g}
\le Ct^{-\frac{3}{4}+\g-\frac{l+1}{2}}.
\end{equation}
Taking $k=l+1$ in the inequality \eqref{claim5} and using decay rate \eqref{Decay1}, then we have
\begin{equation*}
\begin{aligned}
&\frac{d}{dt}\c{E}^{3,l+1}_\g(t)
  +C(\|\n^{l+2} \vr\|_{H^{1-l}_\g}^2+\|\n^{l+2} u\|_{H^{2-l}_\g}^2) \\
\le
& C\|\n^{l+1}(\vr, u)\|_{H^{2-l}_{\g-1}}^2+C\|\n^{l+1} \vr\|_{H^{2-l}_\g}\|\n^{l+1} u\|_{H^{2-l}_{\g-1}}
  +C t^{-\frac{5}{2}} \|\n (\vr, u, B)\|_{H^{2}_\g}^2,
\end{aligned}
\end{equation*}
or equivalently, it holds on
\begin{equation*}
\begin{aligned}
&\frac{d}{dt}\c{E}^{3,l+1}_\g(t)
  +\frac{C}{2}\|\n^3 \vr\|_{L^2_\g}^2
  +\frac{C}{2} (\|\n^{l+2} \vr\|_{H^{1-l}_\g}^2+\|\n^{l+2} u\|_{H^{2-l}_\g}^2) \\
\le
& C\|\n^{l+1}(\vr, u)\|_{H^{2-l}_{\g-1}}^2+C\|\n^{l+1} \vr\|_{H^{2-l}_\g}\|\n^{l+1} u\|_{H^{2-l}_{\g-1}}
  +C t^{-\frac{5}{2}} \|\n (\vr, u, B)\|_{H^{2}_\g}^2.
\end{aligned}
\end{equation*}
Taking the Fourier Splitting method, similar to estimate \eqref{2117} or \eqref{2118}, we have
\begin{equation}\label{2133}
\begin{aligned}
&\frac{d}{dt}\c{E}^{3,l+1}_\g(t)
  +\frac{C}{2}\|\n^3 \vr\|_{L^2_\g}^2
  +\frac{CR}{4t} (\|\n^{l+1} \vr\|_{H^{1-l}_\g}^2+\|\n^{l+1} u\|_{H^{2-l}_\g}^2) \\
\le
& C t^{-2}(\|\n^{l} \vr\|_{H^{1-l}_\g}^2+\|\n^{l} u\|_{H^{2-l}_\g}^2)
   +C t^{-1} (\|\n^{l+1} \vr\|_{H^{1-l}_{\g-1}}^2+\|\n^{l+1} u\|_{H^{2-l}_{\g-1}}^2)\\
& +C(\|\n^{l+1} \vr\|_{H^{2-l}_{\g-1}}^2+\|\n^{l+1} u\|_{H^{2-l}_{\g-1}}^2)
  +C\|\n^{l+1} \vr\|_{H^{2-l}_\g}\|\n^{l+1} u\|_{H^{2-l}_{\g-1}}\\
&
  +C t^{-\frac{5}{2}} \|\n (\vr, u, B)\|_{H^{2}_\g}^2.
\end{aligned}
\end{equation}
For some large time $t$ such that $1\ge R/(2t)$, then it holds on
\begin{equation}\label{2134}
\frac{C}{2}\|\n^3 \vr\|_{L^2_\g}^2+\frac{CR}{4t} \|\n^{l+1} \vr\|_{H^{1-l}_\g}^2
\ge \frac{CR}{4t}(\|\n^3 \vr\|_{L^2_\g}^2+\|\n^{l+1} \vr\|_{H^{1-l}_\g}^2)
\ge \frac{CR}{4t}\|\n^{l+1} \vr\|_{H^{2-l}_\g}^2.
\end{equation}
Using the Cauchy inequality, we have
\begin{equation}\label{2135}
\|\n^{l+1} \vr\|_{H^{2-l}_\g}\|\n^{l+1} u\|_{H^{2-l}_{\g-1}}
\le \var t^{-1}\|\n^{l+1} \vr\|_{H^{2-l}_\g}^2
     +C t \|\n^{l+1} u\|_{H^{2-l}_{\g-1}}^2.
\end{equation}
Using assumption estimate \eqref{2132}, inequalities \eqref{2133}, \eqref{2134}, \eqref{2135},
and equivalent relation \eqref{equivalent2}, it holds on
\begin{equation}\label{2136}
\frac{d}{dt}\c{E}^{3,l+1}_\g(t)+\frac{CR}{4t} \c{E}^{3,l+1}_\g(t)
\le
 C\|\n^{l+1} \vr\|_{H^{2-l}_{\g-1}}^2+ Ct\|\n^{l+1} u\|_{H^{2-l}_{\g-1}}^2
  +C t^{-\frac{7}{2}+2\g-l},
\end{equation}
for large time $t$.
Taking $\g=1$ in \eqref{2136} and using decay rate \eqref{Decay1}, then we have
\begin{equation*}
\frac{d}{dt}\c{E}^{3,l+1}_1(t)+\frac{CR}{4t} \c{E}^{3,l+1}_1(t)
\le
 C\|\n^{l+1} \vr\|_{H^{2-l}}^2+ Ct\|\n^{l+1} u\|_{H^{2-l}}^2
  +C t^{-\frac{3}{2}-l}
\le C t^{-\frac{3}{2}-l}.
\end{equation*}
Choosing $R=\frac{4(l+1)}{C}$ in the above inequality
and multiplying by $t^{l+1}$, we obtain
\begin{equation*}
\frac{d}{dt}[t^{l+1} \c{E}^{3,l+1}_1(t)]\le Ct^{-\frac{1}{2}},
\end{equation*}
which, integrating with respect to time and using the equivalent relation \eqref{equivalent2}, yields directly
\begin{equation}\label{2137}
\|\n^{l+1} \vr(t)\|_{H^{2-l}_1}^2+\|\n^{l+1} u(t)\|_{H^{2-l}_1}^2
\le C t^{-\frac{1}{2}-l}.
\end{equation}
Now, we will take the strategy of induction to give the proof to estimate \eqref{2138}.
In fact, the decay rate \eqref{2137} implies \eqref{2138} holding on for the case $\g=1$.
By the general step of induction, assume that the estimate \eqref{2138} holds on for
the case $\g=\g_1 \ge 1$.  Then, we need to verify that \eqref{2138} holds on for the case $\g=\g_1+1$.
Thus, taking $\g=\g_1+1$ and $R=\frac{4(l+1)}{C}$ in \eqref{2136}, it holds on
\begin{equation*}
\frac{d}{dt}\c{E}^{3,l+1}_{\g_1+1}(t)
  +\frac{l+1}{t} \c{E}^{3,l+1}_{\g_1+1}(t)
\le C t^{-\frac{3}{2}+2\g_1-l}.
\end{equation*}
Then, multiplying the above inequality by $t^{l+1}$
and integrating with respect to time, we obtain
\begin{equation*}
\|\n^{l+1} \vr (t)\|_{H^{2-l}_{\g_1+1}}^2
+\|\n^{l+1} u (t)\|_{H^{2-l}_{\g_1+1}}^2\le C t^{-\frac{3}{2}+2(\g_1+1)-(l+1)},
\end{equation*}
which implies that \eqref{2138} holds on for the case $\g=\g_1+1$.
By the general step of induction, we have verified the estimate \eqref{2138}.
Then, we complete the proof of lemma due to the general step of induction.
\end{proof}

Therefore, we can obtain the decay estimates \eqref{Decay42} and \eqref{Decay44}
by using the decay rates \eqref{2111}, \eqref{2131} and equation \eqref{eq-per2}.
Finally, we address the lower decay rate for the solution of compressible
Hall-MHD equation \eqref{eq-per2} in weighted norm.
For any $s\in [0, 3/2)$, we using the Hardy and H\"{o}lder inequalities to get
$$
\|f\|_{L^2_{-s}}\le C\|f\|_{\dot{H}^{s}},
$$
and
$$
\|f\|_{L^2}\le \|f\|_{L^2_{s}}^{\frac{1}{2}} \|f\|_{L^2_{-s}}^{\frac{1}{2}},
$$
where $\dot{H}^{s}$ denotes the homogeneous Sobolev space and $f$ is a suitable function.
Using decay rate \eqref{Decay1}, then it holds on
\begin{equation*}
Ct^{-\frac{3}{4}}\le \|B(t)\|_{L^2}
\le C\|B(t)\|_{L^2_{\g}}^{\frac{1}{2}} \|B(t)\|_{L^2_{-\g}}^{\frac{1}{2}}
\le C\|B(t)\|_{L^2_{\g}}^{\frac{1}{2}}\|B(t)\|_{\dot{H}^{\g}}^{\frac{1}{2}}
\le C t^{-\frac{3}{8}-\frac{\g}{4}}\|B(t)\|_{L^2_{\g}}^{\frac{1}{2}},
\end{equation*}
for all $\g \in [0, 3/2)$.
If the Fourier transform $\widehat{B_0}=\mathcal{F}(B_0)$
satisfies $|\widehat{B_0}|\ge c_0$ for all $0\le |\xi| \ll 1$, then
it follows from decay rate \eqref{Decay23} that
$$\|B(t)\|_{L^2} \ge Ct^{-\frac{3}{4}}.$$
Thus, we can obtain the lower bound of decay rate estimate
\begin{equation}\label{2141}
\|B(t)\|_{L^2_\g}\ge C t^{-\frac{3}{4}+\frac{\g}{2}},
\end{equation}
for all $\g \in [0, 3/2)$. Similarly, we also have
\begin{equation}\label{2142}
\|\n B(t)\|_{L^2_\g}
\ge C t^{-\frac{3}{4}+\frac{\g}{2}-\frac{1}{2}},
\end{equation}
for all $\g \in [0, 3/2)$. If the weighed exponent $\g \in [0, 1]$,
it holds on
$$
\|\n^2 B(t)\|_{L^2}
\le C\|\n^2 B(t)\|_{L^2_{\g}}^{\frac{1}{2}} \|\n^2 B(t)\|_{L^2_{-\g}}^{\frac{1}{2}}
\le C\|\n^2 B(t)\|_{L^2_{\g}}^{\frac{1}{2}}\|\n^2 B(t)\|_{\dot{H}^{\g}}^{\frac{1}{2}},
$$
which, together with decay rate \eqref{Decay23}, yields directly
\begin{equation}\label{2143}
\|\n^2 B(t)\|_{L^2_\g}
\ge C t^{-\frac{7}{4}+\frac{\g}{2}},
\end{equation}
and hence, we use the magnetic field equation in \eqref{eq-per2} to obtain
\begin{equation}\label{2144}
\|\p_t B(t)\|_{L^2_\g}\ge Ct^{-\frac{7}{4}+\frac{\g}{2}}.
\end{equation}
The combination of estimates \eqref{2141}-\eqref{2144}
complete the proof of estimates \eqref{Decay45} and \eqref{Decay46}.
Thus, we have already obtained the optimal space-time decay rate for the magnetic field in this paper.

If the Fourier transform
$\mathcal{F}(\vr_0, m_0)=(\widehat{\vr_0}, \widehat{m_0})$
satisfies
\begin{equation*}
|\widehat{\vr_0}|\ge c_0, ~ \widehat{m_0}=0, ~0 \le |\xi| \ll 1,
\end{equation*}
where $c_0$ is a positive constant.
Similar to \eqref{2141}, we use decay rates \eqref{Decay21} and \eqref{Decay22} to obtain
\begin{equation*}
\min\{\|\vr(t)\|_{L^2_\g}, \|u(t)\|_{L^2_\g}\}\ge C t^{-\frac{3}{4}+\frac{\g}{2}}.
\end{equation*}
Thus, it seems that the decay rate
$$
\|(\vr, u)(t)\|_{L^2_\g}=\mathcal{O}(t^{-\frac{3}{4}+\frac{\g}{2}}),
$$
should be the sharp decay rate for the density and velocity.
This will be investigated in future.

\section{Proof of Some Technical Estimates}\label{technical}

In this section, we will establish the claim estimates that have been used in
Sections \ref{lower-bound} and \ref{space-time}.
More precisely, we establish the claim estimates \eqref{claim1}, \eqref{claim2}, \eqref{claim3},
\eqref{Claim-Decay}, \eqref{claim4}, and \eqref{claim5}.
.

\textbf{\underline{Proof of inequality \eqref{claim1}:}}
Multiplying the first and second of \eqref{eq-differ} by $\vr_\d$ and $m_\d$ respectively,
it holds on
\begin{equation*}
\frac{d}{dt}\frac{1}{2}\int (|\vr_\d|^2+|m_\d|^2)dx
+\mu \int |\n m_\d|^2 dx +(\mu+\nu)\int |{\rm div} m_\d|^2 dx=\int S_1 \cdot \n m_\d dx.
\end{equation*}
By virtue of the Taylor expression formula, it holds on
$$
P(1+\vr)-P(1)-\vr \sim \vr^2,
$$
which, together with the Sobolev inequality, yields directly
\begin{equation*}
\|S_1\|_{L^2}\le C\|(\vr, u, B)\|_{H^2}\|\n (\vr, u, B)\|_{L^2}.
\end{equation*}
where the symbol $\sim$ represents the equivalent relation.
Then, we get
\begin{equation}\label{311}
\frac{d}{dt}\|(\vr_\d, m_\d)\|_{L^2}^2
+\mu\|\n m_\d\|_{L^2}^2 +(\mu+\nu)\|\nc m_\d\|_{L^2}^2 dx
\le C\|(\vr, u, B)\|_{H^2}^2\|\n (\vr, u, B)\|_{L^2}^2.
\end{equation}
Applying the equation \eqref{eq-differ}, it is easy to obtain for $k=1,2$,
\begin{equation}\label{312}
\frac{d}{dt}\|\n^k (\vr_\d, m_\d)\|_{L^2}^2
+\mu \|\n^{k+1} m_\d\|_{L^2}^2+(\mu+\nu)\|\n^k \nc m_\d\|_{L^2}^2
\le \|\n^k S_1\|_{L^2} \|\n^{k+1} m_\d\|_{L^2}.
\end{equation}
Now we give the estimates for $\|\n^k S_1\|_{L^2}^2, k=1,2$.
Indeed, we apply the Morse and Sobolev inequalities to obtain
\begin{equation*}
\begin{aligned}
\|\n^k((1+\vr)u\otimes u)\|_{L^2}
&\le C\|1+\vr\|_{L^\infty}\|u\|_{L^\infty}\|\n^k u\|_{L^2}
     +\|u\|_{L^\infty}^2\|\n^k \vr\|_{L^2}\\
&\le C(1+\|\n u\|_{H^1})\|\n u\|_{H^1}\|\n^k (\vr, u)\|_{L^2}\\
&\le C\|\n u\|_{H^1}\|\n^k (\vr, u)\|_{L^2}.
\end{aligned}
\end{equation*}
Similarly, we also have for $k=1,2$,
\begin{equation*}
\|\n^{k+1} (\vr u)\|_{L^2}+\|\n^{k} {\rm div}(\vr u)\|_{L^2}
  \le C\|\n (\vr, u)\|_{H^1}\|\n^{k+1} (\vr, u)\|_{L^2},
\end{equation*}
and
\begin{equation*}
\|\n^k(\frac{1}{2}|B|^2\mathbb{I}_{3\times 3}-B \otimes B)\|_{L^2}
\le C\|\n B\|_{H^1}\|\n^k B\|_{L^2}.
\end{equation*}
By virtue of the Taylor expression formula, we get
$$
P'(1+\vr)\n \vr-P'(1)\n \vr \sim \vr \n \vr,
$$
and
$$
P''(1+\vr)\n \vr \n \vr+P'(1+\vr)\n^2 \vr-P'(1)\n^2 \vr
\sim \n \vr \n \vr+\vr \n^2 \vr.
$$
Then, we use Sobolev inequality to obtain
\begin{equation*}
\|\nabla^k (P(1+\vr)-P(1)-\vr )\|_{L^2}
\le C\|\n \vr\|_{H^1}\|\n^k \vr\|_{L^2}.
\end{equation*}
for $k=1,2$.
Thus, it holds on for $k=1,2$,
\begin{equation}\label{313}
\|\n^k S_1\|_{L^2}\le C\|\n (\vr, u, B)\|_{H^1}\|\n^k (\vr, u, B)\|_{H^1}.
\end{equation}
Then, we use the equation \eqref{312} and Cauchy inequality to get
\begin{equation*}
\frac{d}{dt}\|\n^k (m_\d, \vr_\d)\|_{L^2}^2+\mu\|\n^{k+1}m_\d\|_{L^2}^2
\le C\|\n (\vr, u, B)\|_{H^1}^2 \|\n^k (\vr, u, B)\|_{H^1}^2,
\end{equation*}
where $k=1,2$.
Therefore, we complete the proof of claim estimate \eqref{claim1}.

\textbf{\underline{Proof of inequality \eqref{claim2}:}}
Taking $k(k=0,1)-$th spatial derivative to the second equation of \eqref{eq-differ}
and multiplying the equation by $\n^{k+1} \vr_\d$, then we have
\begin{equation*}
\begin{aligned}
&\int \p_t \n^k m_\d \cdot \n^{k+1} \vr_\d dx+\int |\n^{k+1} \vr_\d|^2 dx\\
&=\int (\mu \n^k \Delta m_\d+(\mu+\nu)\n^{k+1} {\rm div} m_\d)\cdot \n^{k+1} \vr_\d dx
  -\int \n^k {\rm div} S_1\cdot \n^{k+1} \vr_\d dx.
\end{aligned}
\end{equation*}
Using the first equation of \eqref{eq-differ}, it holds on
\begin{equation*}
\begin{aligned}
\int \p_t \n^k m_\d \cdot \n^{k+1} \vr_\d dx
&=\frac{d}{dt}\int \n^k m_\d \cdot \n^{k+1} \vr_\d dx
  -\int \n^k m_\d \cdot \n^{k+1} \p_t \vr_\d dx\\
&=\frac{d}{dt}\int \n^k m_\d \cdot \n^{k+1} \vr_\d dx
  +\int \n^k m_\d \cdot \n^{k+1} {\rm div} m_\d dx\\
&=\frac{d}{dt}\int \n^k m_\d \cdot \n^{k+1} \vr_\d dx
  -\int |\n^k {\rm div} m_\d|^2 dx.
\end{aligned}
\end{equation*}
Thus, we combine the above two equalities to obtain
\begin{equation}\label{321}
\begin{aligned}
&\frac{d}{dt}\int \n^k m_\d \cdot \n^{k+1} \vr_\d dx+\int |\n^{k+1} \vr_\d|^2 dx\\
=&\int |\n^k \nc m_\d|^2 dx-\int \n^k {\rm div} S_1\cdot \n^{k+1} \vr_\d dx\\
 &+\int (\mu \n^k \Delta m_\d+(\mu+\nu)\n^{k+1} {\rm div} m_\d)\cdot \n^{k+1} \vr_\d dx,
\end{aligned}
\end{equation}
which, together with Cauchy inequality, yields directly
\begin{equation*}
\frac{d}{dt}\int \n^k m_\d \cdot \n^{k+1} \vr_\d dx
+\frac{1}{2}\int |\n^{k+1} \vr_\d|^2 dx
\le C(\|\n^{k+1} m_\d\|_{H^1}^2+\|\n^{k+1} S_1\|_{L^2}^2).
\end{equation*}
This and the estimate \eqref{313} implies \eqref{claim2}.
Therefore, we complete proof of claim estimate \eqref{claim2}.

\textbf{\underline{Proof of inequality \eqref{claim3}:}}
Similar to \eqref{312}, it holds on for $k=1,2,3$,
\begin{equation*}
\frac{d}{dt}\frac{1}{2}\int |\n^k B_\d|^2 dx+\int |\n^{k+1} B_\d|^2 dx
\le \|\n^k S_2\|_{L^2}\|\n^{k+1} B_\d\|_{L^2}.
\end{equation*}
Using Morse and Sobolev inequalities, we find
\begin{equation*}
\|\n^k (u \times B)\|_{L^2}\le C\|(u, B)\|_{L^\infty}\|\n^k (u, B)\|_{L^2},
\end{equation*}
and
\begin{equation*}
\begin{split}
\|\n^k ((\nt B)\times B)\|_{L^2}
\le C\|\n B\|_{L^\infty}\|\n^k B\|_{L^2}
      +C\|B\|_{L^\infty}\|\n^{k+1} B\|_{L^2}.
\end{split}
\end{equation*}
Due to the Taylor expression formula, it holds on
$$
\frac{1}{1+\vr}=\frac{1}{1+\vr}-1+1 \sim \vr+1.
$$
Applying the Morse and Sobolev inequalities, we have
\begin{equation*}
\begin{aligned}
&\|\n^k \{\frac{1}{1+\vr}(\nt B)\times B \}\|_{L^2}\\
\le &C\|\vr\|_{L^\infty}\|\n^k ((\nt B)\times B)\|_{L^2}
      +C\|(\nt B)\times B\|_{L^\infty}\|\n^{k} \{\frac{1}{1+\vr}\}\|_{L^2}\\
\le &C\|\n^k ((\nt B)\times B)\|_{L^2}+C\|\n B\|_{L^\infty}\|\n^{k} \vr\|_{L^2}.
\end{aligned}
\end{equation*}
Then, we can obtain the following estimate
\begin{equation*}
\|\n^k (\frac{(\nt B)\times B}{1+\vr})\|_{L^2}
\le C\|\n B\|_{L^\infty}\|\n^{k}(\vr, B)\|_{L^2}
    +C\|B\|_{L^\infty}\|\n^{k+1} B\|_{L^2}.
\end{equation*}
Thus, we can obtain the following estimate
\begin{equation}\label{331}
\frac{d}{dt} \int |\n^k B_\d|^2 dx+\int |\n^{k+1} B_\d|^2 dx
\le C \|(u, B)\|_{W^{1,\infty}}^2\|\n^k (\vr, u, B)\|_{L^2}^2+\|B\|_{L^\infty}^2\|\n^{k+1} B\|_{L^2}^2.
\end{equation}
Therefore, we complete the proof of claim estimate \eqref{claim3}.

\textbf{\underline{Proof of inequality \eqref{Claim-Decay}:}}
Similar to the inequality \eqref{331}, we can obtain
\begin{equation*}
\frac{d}{dt}\|\n^3 B\|_{L^2}^2 +\|\n^4 B\|_{L^2}^2
\le C \|(u, B)\|_{W^{1,\infty}}^2\|\n^3 (\vr, u, B)\|_{L^2}^2+C\|B\|_{L^\infty}^2\|\n^4 B\|_{L^2}^2.
\end{equation*}
which, together with the smallness of initial data, implies directly
\begin{equation*}
\frac{d}{dt}\|\n^3 B\|_{L^2}^2 +\|\n^4 B\|_{L^2}^2
\le C \|(u, B)\|_{W^{1,\infty}}^2\|\n^3 (\vr, u, B)\|_{L^2}^2.
\end{equation*}
Multiplying the above inequality by $(1+t)^3$, then we have
\begin{equation*}
\begin{aligned}
&\frac{d}{dt}[(1+t)^3\|\n^3 B\|_{L^2}^2]+(1+t)^3\|\n^4 B\|_{L^2}^2\\
&\le 3(1+t)^2\|\n^3 B\|_{L^2}^2+C(1+t)^3\|(u, B)\|_{W^{1,\infty}}^2\|\n^3 (\vr, u, B)\|_{L^2}^2\\
&\le C(1+t)^{-\frac{5}{2}}.
\end{aligned}
\end{equation*}
Integrating over the above inequality over $[T_*, t]$ and using the uniform estimate \eqref{uniform}, we have
$$
\int_{T_*}^t (1+\tau)^3\|\n^4 B\|_{L^2}^2 d\tau \le C,
$$
where $C$ is a positive constant independent of time. Therefore we complete the proof of lemma.

\textbf{\underline{Proof of inequality \eqref{claim4}:}}
Applying $\n^k(k=1,2,3)$ operator to the third equation in \eqref{eq-per2},
multiplying by $\x^{2\g} \n^k B$, and integrating by part, it is easy to check
\begin{equation}\label{351}
\begin{aligned}
&\frac{d}{dt} \|\n^k B\|_{L^2_\g}^2+\|\n^{k+1} B\|_{L^2_\g}^2\\
\le
&C\|\n^k B\|_{L^2_{\g-1}}^2
    +C(\|\x^{2\g} |\n^{k-1} G_3| |\n^{k+1}B|\|_{L^1}+\|\x^{2\g-1} |\n^{k-1} G_3| |\n^{k}B|\|_{L^1}).
\end{aligned}
\end{equation}
For $k=1,2$, it is easy to check that
\begin{equation}\label{352}
\begin{aligned}
\frac{d}{dt}\|\n B(t)\|_{L^2_\g}^2+\|\n^2 B\|_{L^2_\g}^2
\le
&\|\n B(t)\|_{L^2_{\g-1}}^2+C\|u\|_{L^\infty}^2 \|\n B\|_{L^2_\g}^2,\\
&+C\|\n^2 (\vr, u, B)\|_{H^1}^2(\|\n B\|_{L^2_\g}^2+\|B\|_{L^2_{\g-1}}^2),
\end{aligned}
\end{equation}
and
\begin{equation}\label{353}
\begin{aligned}
\frac{d}{dt}\|\n^2 B(t)\|_{L^2_\g}^2+\|\n^3 B\|_{L^2_\g}^2
\le
&C\|\n^2 B(t)\|_{L^2_{\g-1}}^2+C\|u\|_{L^\infty}^2 \|\n^2 B\|_{L^2_\g}^2\\
& +C\|\n^2 (\vr, u, B)\|_{H^1}^2(\|\n B\|_{H^1_\g}^2+\|B\|_{H^1_{\g-1}}^2).
\end{aligned}
\end{equation}
Now we will deal with the case $k=3$ in equality \eqref{351} in detail.
Taking $k=3$ in \eqref{351}, it holds on
\begin{equation}\label{354}
\begin{aligned}
&\frac{d}{dt} \|\n^3 B\|_{L^2_\g}^2+\|\n^4 B\|_{L^2_\g}^2\\
\le
&C\|\n^3 B\|_{L^2_{\g-1}}^2
    +C(\|\x^{2\g} |\n^2 G_3| |\n^4 B|\|_{L^1}+\|\x^{2\g-1} |\n^2 G_3| |\n^3 B|\|_{L^1}).
\end{aligned}
\end{equation}
Using the H\"{o}lder, Cauchy and Sobolev inequalities, we have
\begin{equation}\label{355}
\begin{aligned}
&\|\x^{2\g} |\n^3 (u \times B)| |\n^4 B|\|_{L^1}\\
\le
&C(\|u\|_{L^\infty} \| \n^3 B\|_{L^2_\g}
     +\|\n u\|_{L^\infty} \|\n^2 B\|_{L^2_\g})\|\n^4 B\|_{L^2_\g}\\
&+C(\|\n^2 u\|_{L^3} \|\n B\|_{L^6_\g}
     +\|\n^3 u\|_{L^2} \|B\|_{L^\infty_\g})\|\n^4 B\|_{L^2_\g}\\
\le
&\var \|\n^4 B\|_{L^2_\g}^2
  +C\|u\|_{L^\infty}^2 \| \n^3 B\|_{L^2_\g}^2
  +C\|\n^2 u\|_{H^1}^2(\|\n^2 B\|_{L^2_\g}^2+\|\n B\|_{L^6_\g}^2+\|B\|_{L^\infty_\g}^2).
\end{aligned}
\end{equation}
Using the Sobolev inequality, it holds on
\begin{equation}\label{356}
\|B\|_{L^\infty_\g}^2\le C\|\n (\x^\g B)\|_{H^1}^2
\le C(\|\n B\|_{H^1_\g}^2+\|B\|_{H^1_{\g-1}}^2+\|B\|_{L^2_{\g-2}}^2),
\end{equation}
and
\begin{equation}\label{357}
\|\n B\|_{L^6_\g}^2\le \|\n (\x^\g \n B)\|_{L^2}^2\le C(\|\n^2 B\|_{L^2_\g}^2+\|\n B\|_{L^2_{\g-1}}^2).
\end{equation}
Substituting estimates \eqref{356} and \eqref{357} into \eqref{355}, one arrives at
\bq\label{358}
\begin{aligned}
&\|\x^{2\g} |\n^3 (u \times B)| |\n^4 B|\|_{L^1}\\
\le
&\var \|\n^4 B\|_{L^2_\g}^2
  +C\|u\|_{L^\infty}^2 \| \n^3 B\|_{L^2_\g}^2
  +C\|\n^2 u\|_{H^1}^2(\|\n B\|_{H^2_\g}^2+\|B\|_{H^2_{\g-1}}^2+\|B\|_{L^2_{\g-2}}^2).
\end{aligned}
\eq
Similarly, it is easy to check that
\bq\label{359}
\begin{aligned}
&\|\x^{2\g-1} |\n^3 (u \times B)| |\n^3 B|\|_{L^1}\\
\le
&\var \|\n^3 B\|_{L^2_{\g-1}}^2
  +C\|u\|_{L^\infty}^2 \| \n^3 B\|_{L^2_\g}^2
  +C\|\n^2 u\|_{H^1}^2(\|\n B\|_{H^2_\g}^2+\|B\|_{H^2_{\g-1}}^2+\|B\|_{L^2_{\g-2}}^2),\\
\end{aligned}
\eq
and
\bq\label{3510}
\begin{aligned}
&\|\x^{2\g} |\n^3 [\frac{(\nt B)\times B }{1+\vr}]| |\n^4 B|\|_{L^1}
  +\|\x^{2\g-1} |\n^3 [\frac{(\nt B)\times B }{1+\vr}]| |\n^3 B|\|_{L^1}\\
\le
&\var (\|\n^4 B\|_{L^2_\g}^2+\|\n^3 B\|_{L^2_{\g-1}}^2)
  +C\|\n^2 (\vr, B)\|_{H^1}^2(\|\n B\|_{H^2_\g}^2+\|B\|_{H^2_{\g-1}}^2+\|B\|_{L^2_{\g-2}}^2).
\end{aligned}
\eq
Since $G_3=\nabla\times (u \times B)-\nabla\times (\frac{(\nt B)\times B }{1+\vr})$, we plug the estimates
\eqref{358}, \eqref{359} and \eqref{3510} into \eqref{357} to get
\begin{equation*}
\begin{aligned}
&\frac{d}{dt}\|\n^3 B(t)\|_{L^2_\g}^2+\|\n^4 B\|_{L^2_\g}^2\\
\le
&C\|\n^3 B(t)\|_{L^2_{\g-1}}^2
    +C\|u\|_{L^\infty}^2 \|\n^3 B\|_{L^2_\g}^2\\
&  +C\|\n^2 (\vr, u, B)\|_{H^1}^2(\|\n B\|_{H^2_\g}^2+\|B\|_{H^2_{\g-1}}^2+\|B\|_{L^2_{\g-2}}^2),
\end{aligned}
\end{equation*}
which, together with \eqref{352} and \eqref{353}, complete the proof of claim inequality \eqref{claim4}.

\textbf{\underline{Proof of inequality \eqref{claim5}:}}
Step 1: Applying $\n^k(k=1,2,3)$ operator to \eqref{eq-per2}$_1$ and \eqref{eq-per2}$_2$,
and  multiplying by $\x^{2\g} \n^k \vr$ and $\x^{2\g} \n^k u$ respectively, we have
\begin{equation}\label{361}
\begin{aligned}
&\frac{d}{dt}\frac{1}{2}\int \x^{2\g} (|\n^k \vr|^2+|\n^k u|^2) dx
+\mu \int \x^{2\g}|\n^{k+1} u|^2 dx
+(\mu+\nu)\int \x^{2\g}|\n^k\nc u|^2 dx\\
=
&\int \n^{k} G_1 \cdot \x^{2\g} \n^k \vr dx
 -\int \n^{k-1} G_2 \cdot \n (\x^{2\g} \n^k u) dx+II^k_1+II^k_2.
\end{aligned}
\end{equation}
where the functions $II^k_1$ and $II^k_2$ are defined by
$$
II^k_1= 2 \g \int \x^{2\g-2}x_j \p_i x_j \n^k \vr \cdot \n^k u_i dx,
$$
and
$$
II^k_2=-2\g (\mu+\nu)\int \x^{2\g-2}x_k \p_i x_k \n^k u_i \n^k \p_j u_j dx
    -2\g \mu \int \x^{2\g-2}x_k \p_j x_k \n^k \p_j u_i \n^k u_i dx.
$$
For the case $k=1,2$, it is easy to check that
\begin{equation}\label{362}
\begin{aligned}
&\frac{d}{dt}(\|\n^k \vr\|_{L^2_\g}^2+\|\n^k u\|_{L^2_\g}^2)+\mu \|\n^{k+1} u\|_{L^2_\g}^2\\
\le
& \var \|\n^{k+1} \vr\|_{L_\gamma^2}^2+C\|\n^k \vr\|_{L^2_{\g-1}}^2+C\|\n^k u\|_{L^2_{\g-1}}^2
  +C\|\n^k \vr\|_{L^2_{\g}}\|\n^k u\|_{L^2_{\g-1}}\\
&+C\|\n (\vr, u, B)\|_{H^{k}}^2 \|\n (\vr, u, B)\|_{H^{k-1}_\g}^2.
\end{aligned}
\end{equation}
Now, we deal with the case $k=3$ in \eqref{361}.
Using H\"{o}lder and Cauchy inequalities, we obtain
\begin{equation}\label{363}
|II^3_1|
\le \|\n^3 \vr\|_{L^2_{\g}}\|\n^3 u\|_{L^2_{\g-1}}.
\end{equation}
and
\begin{equation}\label{364}
\begin{aligned}
|II^3_2|
&\le C\mu \|\n^4 u\|_{L^2_\g}\|\n^3 u\|_{L^2_{\g-1}}
     +C(\mu+\nu) \|\n^3 \nc u\|_{L^2_\g}\|\n^3 u\|_{L^2_{\g-1}} \\
&\le \var \mu \|\n^4 u\|_{L^2_\g}^2+\var (\mu+\nu)\|\n^3 \nc u\|_{L^2_\g}^2
         +C\|\n^3 u\|_{L^2_{\g-1}}^2.
\end{aligned}
\end{equation}
Now, we deal with the term $\int \n^{3} G_1 \cdot \x^{2\g} \n^3 \vr dx$.
Obviously, it holds on
\begin{equation*}
\int \n^{3} G_1 \cdot \x^{2\g} \n^3 \vr dx
=-\int (u\cdot \n\n^3 \vr)\cdot \x^{2\g}\n^3 \vr dx
 +\int (\n^{3} G_1+u\cdot \n \n^3 \vr) \cdot \x^{2\g} \n^3 \vr dx.
\end{equation*}
Indeed, integrating by part, one arrives at
\begin{equation*}
\int (u\cdot \n\n^3 \vr)\cdot \x^{2\g}\n^3 \vr dx
=-\frac{1}{2}\int |\n^3 \vr|^2 {\rm div} (\x^{2\g}u) dx,
\end{equation*}
which, using the H\"{o}lder and Cauchy inequalities, yields directly
\begin{equation}\label{365}
\begin{aligned}
|\int |\n^3 \vr|^2 {\rm div} (\x^{2\g}u) dx|
\le
&C\|\n u\|_{L^\infty}\|\n^3 \vr\|_{L^2_\g}^2
    +C\|u\|_{L^\infty}\|\n^3 \vr\|_{L^2_\g} \|\n^3 \vr\|_{L^2_{\g-1}}\\
\le
&\d_0 \|\n^3 \vr\|_{L^2_\g}^2+C\|\n^3 \vr\|_{L^2_{\g-1}}^2.
\end{aligned}
\end{equation}
Using the H\"{o}lder, Sobolev and Cauchy inequalities, it holds on
\begin{equation}\label{366}
\begin{aligned}
&|\int (\n^{3} G_1+u\cdot \n \n^3 \vr) \cdot \x^{2\g} \n^3 \vr dx|\\
\le
&C\|\n u\|_{L^\infty}\|\n^3 \vr\|_{L^2_\g}^2
  +C\|\n^2 u\|_{L^3}\|\n^2 \vr\|_{L^6_\g}\|\n^3 \vr\|_{L^2_\g}\\
& +C\|\n \vr \|_{L^\infty}\|\n^3 u\|_{L^2_\g}\|\n^3 \vr\|_{L^2_\g}
  +C\|\vr \|_{L^\infty}\|\n^4 u\|_{L^2_\g}\|\n^3 \vr\|_{L^2_\g}\\
\le
&\var \|\n^4 u\|_{L^2_\g}^2+\d_0\|\n^3 \vr\|_{L^2_\g}^2
  +C\|\n^2 \vr\|_{L^2_{\g-1}}^2+C\|\n^2 \vr\|_{H^1}^2\|\n^3 u\|_{L^2_\g}^2,
\end{aligned}
\end{equation}
where we have used the uniform bound \eqref{uniform} and the following inequality
$$
\|\n^2 \vr\|_{L^6_\g}=\|\x^\g \n^2 \vr\|_{L^6}
\le C(\|\n^3 \vr\|_{L^2_\g}+\|\n^2 \vr\|_{L^2_{\g-1}}).
$$
Combining the estimates \eqref{365} and \eqref{366}, we obtain
\begin{equation}\label{367}
|\int \n^{3} G_1 \cdot \x^{2\g} \n^3 \vr dx|
\le
\var \|\n^4 u\|_{L^2_\g}^2+\d_0\|\n^3 \vr\|_{L^2_\g}^2
  +C\|\n^2 \vr\|_{H^1_{\g-1}}^2+C\|\n^2 \vr\|_{H^1}^2\|\n^3 u\|_{L^2_\g}^2.
\end{equation}
Similarly, we can get
\begin{equation}\label{368}
\begin{aligned}
&|\int \n^2 G_2 \cdot \n (\x^{2\g} \n^3 u) dx|\\
\le
& \var \|\n^4 u\|_{L^2_\g}^2+C\|\n^3 u\|_{L^2_{\g-1}}^2+C\|\n^2 G_2\|_{L^2_\g}^2\\
\le
&(\var+\d_0)\|\n^4 u\|_{L^2_\g}^2+C\|\n^2 u\|_{H^1_{\g-1}}^2
 +C\|\n (\vr, u, B)\|_{H^2}^2\|\n (\vr, u, B)\|_{H^2_\g}^2.
\end{aligned}
\end{equation}
Combining the estimates \eqref{363}, \eqref{364}, \eqref{367} and \eqref{368}, it holds on
\begin{equation*}
\begin{aligned}
&\frac{d}{dt}(\|\n^3 \vr\|_{L^2_\g}^2+\|\n^3 u\|_{L^2_\g}^2)+\mu \|\n^4 u\|_{L^2_\g}^2\\
\le
& \d_0 \|\n^3 \vr\|_{L^2}^2+C\|\n^2 \vr\|_{H^1_{\g-1}}^2+C\|\n^2 u\|_{H^1_{\g-1}}^2
  +C\|\n^3 \vr\|_{L^2_{\g}}\|\n^3  u\|_{L^2_{\g}}\\
&+C\|\n (\vr, u, B)\|_{H^{2}}^2 \|\n (\vr, u, B)\|_{H^2_\g}^2,
\end{aligned}
\end{equation*}
which, together with estimates \eqref{296} and \eqref{362}, yields directly
\begin{equation}\label{369}
\begin{aligned}
&\frac{d}{dt}(\|\n^k \vr (t)\|_{H^{3-k}_\g}^2+\|\n^k u \|_{H^{3-k}_\g}^2)
  +\mu\|\n^{k+1} u\|_{H^{3-k}_\g}^2\\
\le
&    C(\var+\d_0)\|\n^{k+1} \vr \|_{H^{2-k}_\g}^2
     +C\|\n^k \vr\|_{H^{3-k}_{\g-1}}^2+C\|\n^k u\|_{H^{3-k}_{\g-1}}^2\\
&    +C\|\n^k \vr\|_{H^{3-k}_\g}\|\n^k u\|_{H^{3-k}_{\g-1}}
     +C\|\n (\vr, u, B)\|_{H^2}^2 \|\n (\vr, u, B)\|_{H^2_\g}^2,
\end{aligned}
\end{equation}
where $k=0,1,2$.

Step 2: we establish the estimate for the dissipation of density.
Indeed, similar to the estimate \eqref{321}, we have for all $k=0,1,2$,
\begin{equation*}
\begin{aligned}
&\frac{d}{dt}\int \n^k u \cdot \x^{2\g} \n^{k+1}\vr dx
  +\int \x^{2\g}|\n^{k+1}\vr|^2 dx\\
=
&\int [\n^k G_2+\mu \n^k \Delta u+(\mu+\nu)\n^{k+1}{\rm div}u]\cdot \x^{2\g} \n^{k+1}\vr dx
 +\int \n^k u \cdot \x^{2\g}\n^{k+1}\vr_t dx.
\end{aligned}
\end{equation*}
Integrating by part and using the density equation, we obtain
\begin{equation*}
\begin{aligned}
&\int \n^k u \cdot \x^{2\g}\n^{k+1}\vr_t dx\\
=&-\int \x^{2\g}\n^{k}{\rm div}u \cdot \n^{k}\vr_t dx
  -2\g \int  \x^{2\g-2} x_i \p_j x_i  \n^k u_j \cdot \n^{k}\vr_t dx\\
=&\int \x^{2\g} |\n^{k}{\rm div} u|^2 dx
  +2\g \int  \x^{2\g-2} x_i \p_j x_i  \n^k u_j \cdot \n^{k} {\rm div}u dx\\
& -\int \x^{2\g}\n^{k}{\rm div} u \cdot \n^{k} G_1 dx
  -2\g \int  \x^{2\g-2} x_i \p_j x_i  \n^k u_j \cdot \n^{k} G_1 dx.
\end{aligned}
\end{equation*}
And then, we apply the H\"{o}lder and Cauchy inequalities to get
\begin{equation*}
\begin{aligned}
&\frac{d}{dt}\int \n^k u \cdot \x^{2\g} \n^{k+1}\vr dx
  +\frac{1}{2}\int \x^{2\g}|\n^{k+1}\vr|^2 dx\\
&\le C\|\n^{k+1} u\|_{H^1_\g}^2
     +C\|\n^k u\|_{L^2_{\g-1}}^2
     +C(\|\n^k G_1\|_{L^2_\g}^2+\|\n^k G_2\|_{L^2_\g}^2).
\end{aligned}
\end{equation*}
Then, similar to \eqref{363} and \eqref{369}, it is easy to check that
\begin{equation}\label{3610}
\begin{aligned}
&\frac{d}{dt}\sum_{k\le l \le 2}\int \n^l u \cdot \x^{2\g} \n^{l+1}\vr dx
  +\frac{1}{2}\|\n^{k+1}\vr \|_{H^{2-k}_\g}^2 \\
&\le C\|\n^{k+1} u\|_{H^{3-k}_\g}^2+C\|\n^k u\|_{H^{3-k}_{\g-1}}^2
     +C\|\n (\vr, u, B)\|_{H^2}^2 \|\n (\vr, u, B)\|_{H^2_\g}^2,
\end{aligned}
\end{equation}
where $k=0,1,2$. Then, multiplying \eqref{3610} by $4C\d_0$
and adding with \eqref{369}, we obtain the claim inequality \eqref{claim5}.
Therefore, we complete the proof of the claim inequality \eqref{claim5}.

\section*{Acknowledgements}
Jincheng Gao's research was partially supported by
Fundamental Research Funds for the Central Universities(Grants No.18lgpy66) and NNSF of China(Grants No.11801586).
Zheng-an Yao's research was partially supported by NNSF of China(Grant Nos.11431015 and 11971496).

\phantomsection
\addcontentsline{toc}{section}{\refname}

\end{document}